\date{Novembre 2015}
\title{Positivité du cotangent logarithmique et conjecture de Shafarevich-Viehweg}
\author{Beno\^\i t CLAUDON}
\address{Institut \'Elie Cartan de Lorraine\\
Universit\'e de Lorraine\\
UMR 7502 du CNRS\\
B.P. 70239\\
F--54 506 Vand\oe uvre-l\`es-Nancy Cedex}
\email{Benoit.Claudon@univ-lorraine.fr}
\newcommand{\RR}{\mathbb R}
\newcommand{\CC}{\mathbb C}
\newcommand{\QQ}{\mathbb Q}
\newcommand{\ZZ}{\mathbb Z}
\newcommand{\PP}{\mathbb P}
\newcommand{\To}{\longrightarrow}
\newcommand{\abs}[1]{\left\vert#1\right\vert}
\newcommand{\set}[1]{\left\{#1\right\}}
\newcommand{\ram}{\operatorname{Ram}}
\newcommand{\dd}{\mathrm{d}}
\newcommand{\F}{\mathcal{F}}
\newcommand{\E}{\mathcal{E}}
\newcommand{\LL}{\mathcal{L}}
\newcommand{\mob}{\operatorname{Mob}}
\newcommand{\psef}{\operatorname{Psef}}
\newcommand{\rk}{\operatorname{rg}}
\newcommand{\sym}{\operatorname{Sym}}
\newtheorem{affi}[defi]{Affirmation}
\begin{document}
\maketitle

\noindent{\bf INTRODUCTION}

\bigskip
Un des premiers objets intrinsèquement attachés à une variété\footnote{Toutes les variétés considérées dans ce texte seront définies sur $\CC$, lisses et connexes. Sauf mention explicite, elles seront de plus projectives. Nous identifierons également fibrés en droites et (classes d'équivalence linéaire de) diviseurs.} projective lisse $X$ est son fibré canonique : $K_X:=\det(\Omega^1_X)$. Il est de plus bien connu que les propriétés de positivité/négativité de ce fibré en droites gouvernent une grande partie de la géométrie de $X$. Il est alors naturel de se demander si les propriétés en question sont la trace de propriétés vérifiées par le fibré cotangent lui-même. C'est l'objet des récents travaux de Campana et P{\u a}un \cite{CP13,CP14,CP15} qui ont permis d'établir le résultat suivant\footnote{\'Enoncé ainsi, le théorème \ref{th-intro:positivité orbifolde} remonte à l'article \cite{CPet}. Il semble cependant que la démonstration de \emph{loc. cit.} ne soit pas complète : l'utilisation des résultats de Miyaoka (\og Relative deformations of morphisms and applications to fibre
spaces\fg , \emph{Comment. Math. Univ. St. Paul.} \textbf{42} (1993), p. 1–7) n'est pas légitime dans la situation de l'article \cite{CPet}.}.

\begin{theo}\label{th-intro:positivité orbifolde}
Soit $X$ une variété projective lisse avec $K_X$ pseudo-effectif. Le fibré cotangent a alors la propriété suivante : pour tout entier $m\ge1$ et pour tout quotient sans torsion
$${\Omega^1_X}^{\otimes m}\twoheadrightarrow \mathcal{Q},$$
le fibré\footnote{Rappelons que le déterminant d'un faisceau cohérent sans torsion $\E$ est le fibré en droites $(\bigwedge^r\E)^{**}$, avec $r$ le rang de $\E$. Voir également la définition \ref{defi:pente faisceau}.} en droites $\det(\mathcal{Q})$ est pseudo-effectif.
\end{theo}

Il faut bien sûr mettre ce résultat en parallèle avec les travaux de Miyaoka \cite{Mi87} qui a montré que, sous les mêmes hypothèses, le fibré $\Omega^1_X$ était \emph{génériquement semi-positif} : le degré de ses quotients est positif sur toute courbe intersection complète dans le système linéaire d'un diviseur très ample. Le théorème \ref{th-intro:positivité orbifolde} constitue donc une généralisation des travaux de Miyaoka.

La généralisation vient aussi du fait que les articles \cite{CP13,CP15} traitent d'une situation bien plus générale puisqu'ils se placent dans le cadre des paires $(X,\Delta)$ où $X$ est lisse et $\Delta$ un $\QQ$-diviseur dont le support est à croisements normaux (et dont les coefficients sont compris entre 0 et 1). Une telle paire sera appelée une orbifolde dans la suite (conformément à la terminologie de \cite{Ca04,Ca11j}). Sous l'hypothèse de pseudo-effectivité de $K_X+\Delta$, ils montrent qu'un fibré naturellement associé à la paire possède des propriétés similaires à celles énoncées dans le théorème \ref{th-intro:positivité orbifolde}. Une des difficultés vient du fait que ce fibré (qu'ils nomment \emph{fibré cotangent orbifolde}) ne vit pas sur $X$ mais sur un revêtement ramifié de $X$ dont la ramification est en partie contrôlée par le diviseur $\Delta$. Nous renvoyons au paragraphe \ref{sub:cotangent} pour les notions esquissées et pour un énoncé complet (à savoir, le théorème \ref{th:positivité orbifolde}).\\

En plus de la construction du cotangent orbifolde et de la mise au jour de ses propriétés essentielles, une des contributions majeures de la prépublication \cite{CP15} est d'établir un critère d'intégrabilité algébrique pour les feuilletages.
\begin{theo}\label{th-intro:intégrabilité}
Soit $\F$ un feuilletage sur $X$ (supposée projective et lisse) vérifiant $\mu_\alpha^{min}(\F)>0$ pour une classe mobile $\alpha\in\mob(X)$. Le feuilletage est alors algébriquement intégrable : ses feuilles sont ouvertes dans leurs adhérences de Zariski. De plus, ces dernières sont des sous-variétés rationnellement connexes de $X$.
\end{theo}
Les notions de classe mobile et de pentes par rapport à une telle classe sont rappelées dans le paragraphe \ref{sub:pente}. \`A nouveau, ce résultat a des prédécesseurs : les travaux de Bost \cite{JBB} et Bogomolov-McQuillan \cite{BM01} fournissent un critère d'intégrabilité algébrique dans le cas d'une classe intersection complète de diviseurs amples.

Une fois le théorème \ref{th-intro:intégrabilité} établi, la stratégie de la démonstration du théorème est alors transparente. En supposant que la conclusion du théorème \ref{th-intro:positivité orbifolde} soit mise en défaut, nous obtenons une classe mobile $\alpha$ et un sous-faisceau de $T_X^{\otimes m}$ dont la pente par rapport à $\alpha$ est strictement positive. Des résultats de comparaison (\emph{cf.} théorème \ref{th:mumax produit tensoriel}) montre qu'il en est de même pour $T_X$ lui-même : il existe un sous-faisceau de $T_X$ de pente strictement positive (par rapport à $\alpha$). Considérons alors le sous-faisceau de $T_X$ maximal pour l'inclusion parmi les sous-faisceaux de pente positive et notons le $\F$. Par construction, ce faisceau vérifie $\mu_\alpha^{min}(\F)>0$ et un raisonnement standard utilisant des inégalités sur les pentes montre que $\F$ est un feuilletage. Le théorème \ref{th-intro:intégrabilité} montre que $X$ doit être recouverte par des courbes rationnelles mais ceci n'est pas possible puisque nous avons supposé le fibré canonique de $X$ pseudo-effectif (d'après le résultat principal de \cite{BDPP}, ceci revient à dire que $X$ n'est pas uniréglée). L'adaptation de cette stratégie au cas orbifolde nécessitera des aménagements qui feront l'objet de la deuxième partie.\\

Une application spectaculaire de la version orbifolde du théorème \ref{th-intro:positivité orbifolde} consiste en une solution élégante d'une conjecture de Viehweg sur la base des familles de variétés canoniquement polarisées.
\begin{theo}\label{th-intro:conj Viehweg}
Soit $f:X^\circ\to Y^\circ$ un morphisme propre et lisse (à fibres connexes) entre variétés quasi-projectives lisses dont les fibres ont leurs fibrés canoniques amples. Fixons de plus une compactification $Y$ de $Y^\circ$ telle que $D:=Y\setminus Y^\circ$ soit à croisements normaux. Si la variation de $f$ est maximale (c'est-à-dire si l'application de Kodaira-Spencer est injective au point général de $Y^\circ$), le diviseur $K_Y+D$ a alors une dimension de Kodaira maximale et la paire $(Y,D)$ est de log-type général.
\end{theo}
Nous verrons dans la dernière partie de ce texte comment formuler une généralisation de ce résultat qui englobe tout à la fois la conjecture de Viehweg et une conjecture de Campana \cite[conj. 13.29]{Ca11j}, formulation due à Taji \cite{Taj}. Les travaux portant sur les familles de variétés canoniquement polarisées ont pour origine la solution de la conjecture d'hyperbolicité de Shafarevich \cite{Sh63} par Parshin \cite{P68} et Arakelov \cite{A71}. Les travaux de Viehweg et Zuo \cite{VZ00} ont  ensuite ouvert la voie en dimension supérieure comme en témoignent par exemple les articles de Kebekus-Kov{\`a}cs \cite{KK08}, Jabbusch-Kebekus \cite{JKmz} et Patakfalvi \cite{Pat12}. Nous renvoyons également au texte de survol \cite{surveyKeb}.

\subsection*{Remerciements :}
Nous tenons à remercier F. Campana, M. P{\u a}un et B. Taji d'avoir répondu à nos diverses questions. Les discussions avec S. Druel, S.Kebekus et M. Toma ont toutes éclairé l'un ou l'autre des aspects présents dans ce texte, qu'ils en soient ici remerciés. Nous exprimons en particulier notre sincère gratitude à S. Druel pour son aide précieuse apportée au cours de la rédaction de ce manuscrit.

\section{Feuilletages : intégrabilité algébrique et positivité}

\subsection{Digest feuilleté}\label{sub:rappel feuilletage}

Nous commençons par faire un rapide point sur les propriétés usuelles des feuilletages singuliers, propriétés qui nous seront utiles par la suite. Ce paragraphe est largement inspiré de la récente prépublication \cite{D15}.
\begin{defi}\label{def:feuilletage}
Un feuilletage (singulier) sur une variété lisse $X$ est un sous-faisceau $\F\subset T_X$ saturé\footnote{Un sous-faisceau $\mathcal{\F}\subset \mathcal{E}$ est dit saturé dans $\mathcal{E}$ si le quotient $\mathcal{E}/\F$ est sans torsion.} dans $T_X$ et stable par le crochet de Lie. Le feuilletage sera dit régulier si $\F$ est un sous-fibré de $T_X$.\\
Le rang $r:=\rk(\F)$ est par définition le rang générique de $\F$ ; de même, le co-rang de $\F$ est l'entier $q:=\dim(X)-r$.\\
Les feuilles de $\F$ sont par définition les feuilles de $\F_{\vert X^\circ}$ où $X^\circ$ est le plus grand ouvert de $X$ sur lequel $\F$ est régulier. Une feuille $L$ sera dite \emph{algébrique} si elle est ouverte dans sa clôture de Zariski\footnote{La notation $\bar{L}^\mathrm{Zar}$ désigne naturellement l'adhérence de Zariski de $L$ dans $X$.}, ou encore si $\dim(L)=\dim(\bar{L}^{\mathrm{Zar}})$.
\end{defi}

Le faisceau $N_\F:=(T_X/\F)^{**}$ sera appelé le faisceau normal à $\F$ et il est de rang $q$. L'inclusion $N^*_\F\hookrightarrow \Omega^1_X$ fournit une forme $\omega_\F\in H^0(X,\Omega^q_X\otimes \det(N_\F))$ dont le lieu des zéros est de codimension au moins deux. La forme $\omega_\F$ est de plus \emph{localement décomposable} et \emph{intégrable}, c'est-à-dire s'écrit localement (au point général de $X$)
$$\omega_\F=\omega_1\wedge\dots\wedge\omega_q$$
où les 1-formes $\omega_i$ vérifient : $\dd \omega_i\wedge\omega_\F=0$. Réciproquement, la donnée d'une telle forme permet de reconstruire le feuilletage comme le noyau du morphisme de contraction induit par $\omega_\F$ : $T_X\to \Omega^{q-1}_X\otimes \det(N_\F)$.

Le fibré (ou diviseur) canonique de $\F$ sera par définition $K_\F:=\det(\F)$ et nous avons bien entendu $K_\F=K_X+\det(N_\F)$.

Si $\varphi:\hat{X}\to X$ est un morphisme birationnel entre variétés lisses, la donnée d'un feuilletage $\F$ sur $X$ induit un feuilletage $\hat{\F}$ sur $\hat{X}$ : $\hat{\F}$ est le saturé dans $T_{\hat{X}}$ de l'image réciproque de $\F$ par la différentielle de $\varphi$. Leurs fibrés canoniques sont naturellement reliés par la formule suivante : $\varphi_*K_{\hat{\F}}=K_\F$ car la différence $K_{\hat{\F}}-\varphi^*K_\F$ est portée par les diviseurs $\varphi$-exceptionnels.\\

Le concept de feuilletage algébriquement intégrable sera de toute première importance dans la suite de ce texte.
\begin{defi}\label{defi:alg intégrable}
Un feuilletage $\F$ sur $X$ sera dit \emph{algébriquement intégrable} si la feuille passant par un point général de $X$ est algébrique (au sens de la définition \ref{def:feuilletage}).
\end{defi}
Un tel feuilletage s'identifie alors à une fibration\footnote{Une fibration est une application propre, surjective et à fibres connexes.} sur un éclatement de $X$. En effet, $X$ étant supposée projective, la famille des (adhérences de Zariski des) feuilles d'un feuilletage algébriquement intégrable est une sous-variété de la variété de Chow $\mathcal{C}(X)$. Un modèle lisse du graphe d'incidence de cette famille de cycles fournit alors une telle fibration. Il existe donc une modification $\varphi:\hat{X}\to X$ et une fibration $f:\hat{X}\to Z$ (avec $Z$ lisse) telles que $\hat{\F}$ s'identifie au noyau de la différentielle $\dd f$. La suite exacte
$$0\To \hat{\F}\To T_{\hat{X}}\To f^*T_Z$$
montre immédiatement que la différence
$$K_{\hat{X}/Z}-\ram(f)-K_{\hat{\F}}$$
est effective et $f$-exceptionnelle. Nous avons utilisé la définition suivante.
\begin{defi}\label{defi:ramification}
Si $g:U\to V$ est un morphisme entre variétés lisses, le diviseur de ramification est :
$$\ram(g):=\sum_{D\subset V}\left(g^*D-(g^*D)_{red} \right)$$
où, dans la somme ci-dessus, $D$ parcourt l'ensemble des diviseurs premiers de $V$. Si $E$ est un diviseur premier de $U$ et $m$ son coefficient dans $\ram(g)$, nous noterons $m_g(E):=m+1$ l'indice de ramification (ou multiplicité) de $g$ le long de $E$.

L'image (ensembliste) de $\ram(g)$ par $g$ est appelée lieu de branchement.
\end{defi}
De toutes ces remarques, nous tirons :
\begin{prop}\label{prop:canonique feuilletage intégrable}
Dans la situation précédente, si de plus\footnote{Un telle fibration est qualifiée de \emph{nette} dans \cite{Ca04}. Il est possible de construire un tel modèle, voir \cite[lem. 1.3]{Ca04} et également \cite[lem. 7.3]{V83}.} tout diviseur $f$-exceptionnel est également $\varphi$-exceptionnel, le fibré canonique de $\F$ est donné par :
$$K_\F=\varphi_*(K_{\hat{X}/Z}-\ram(f)).$$
\end{prop}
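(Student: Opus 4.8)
Le plan est de pousser en avant par $\varphi_*$ la relation déjà établie sur $\hat{X}$ et de tirer parti de l'hypothèse supplémentaire pour faire disparaître le terme correctif.

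On partirait de l'égalité de diviseurs fournie par les remarques précédentes,
$$K_{\hat{X}/Z}-\ram(f)=K_{\hat{\F}}+E,$$
où $E$ est effectif et $f$-exceptionnel, et on lui appliquerait $\varphi_*$ pour obtenir
$$\varphi_*\left(K_{\hat{X}/Z}-\ram(f)\right)=\varphi_*K_{\hat{\F}}+\varphi_*E.$$
La deuxième étape consisterait à invoquer la formule $\varphi_*K_{\hat{\F}}=K_\F$ déjà rappelée, conséquence du fait que $K_{\hat{\F}}-\varphi^*K_\F$ est $\varphi$-exceptionnel et que $\varphi$ est birationnel.

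Il resterait alors uniquement à montrer que $\varphi_*E=0$, et c'est là que l'hypothèse jouerait son rôle : le diviseur $E$ étant $f$-exceptionnel, l'hypothèse (tout diviseur $f$-exceptionnel est $\varphi$-exceptionnel) le rend $\varphi$-exceptionnel, de sorte que son image par $\varphi$ est de codimension au moins deux et que $\varphi_*E=0$. En reportant ces deux points dans l'identité précédente, on aboutirait à $\varphi_*(K_{\hat{X}/Z}-\ram(f))=K_\F$, soit l'énoncé voulu.

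On s'attend à ce que la seule véritable difficulté ne réside pas dans ce calcul — immédiat une fois admises l'effectivité et la $f$-exceptionnalité de $E$ déduites de la suite exacte $0\To\hat{\F}\To T_{\hat{X}}\To f^*T_Z$ — mais dans le constat que l'hypothèse de netteté a été taillée exactement pour annuler $\varphi_*E$ : sans elle, on n'obtiendrait qu'une inégalité $K_\F\le\varphi_*(K_{\hat{X}/Z}-\ram(f))$, le poussé en avant d'un diviseur $f$-exceptionnel mais non $\varphi$-exceptionnel pouvant contribuer un diviseur effectif non trivial sur $X$. Par précaution, on vérifierait également que les poussés en avant au niveau des diviseurs et des classes de fibrés en droites coïncident, et que l'identité de départ, a priori valable sur l'ouvert de lissité de $f$, s'étend en une égalité globale de diviseurs sur $\hat{X}$.
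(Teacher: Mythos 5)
Votre démonstration coïncide avec celle du papier : la proposition y est présentée comme conséquence immédiate des deux mêmes remarques que vous invoquez, à savoir la suite exacte $0\To \hat{\F}\To T_{\hat{X}}\To f^*T_Z$ qui donne $K_{\hat{X}/Z}-\ram(f)-K_{\hat{\F}}=E$ avec $E$ effectif et $f$-exceptionnel, et la formule $\varphi_*K_{\hat{\F}}=K_\F$, l'hypothèse de netteté servant exactement à rendre $E$ $\varphi$-exceptionnel et donc $\varphi_*E=0$. Votre observation finale (sans cette hypothèse on n'obtient que l'inégalité $K_\F\le\varphi_*(K_{\hat{X}/Z}-\ram(f))$) est correcte et explicite bien le rôle de l'hypothèse.
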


Concluons ce paragraphe par la notion de diviseur invariant/horizontal par un feuilletage.
\begin{defi}\label{def:diviseur invariant}
Un diviseur premier $D\subset X$ sera dit \emph{invariant} par le feuilletage $\F$ si sa restriction $D_{\vert X^\circ}$ est une réunion de feuilles de $\F_{\vert X^\circ}$. Si $\Delta$ est un $\QQ$-diviseur, nous noterons $\Delta^{inv}$ le $\QQ$-diviseur des composantes de $\Delta$ invariantes par $\F$ (et affectées des mêmes multiplicités que dans $\Delta$) et sa partie \emph{horizontale} sera $\Delta^{hor}:=\Delta-\Delta^{inv}$.
\end{defi}
La terminologie \emph{horizontale} est bien évidemment calquée sur la situation d'une fibration.

\subsection{Positivité des feuilletages algébriquement intégrables}

Nous allons constater que, sous des hypothèses assez faibles sur $X$, le fibré canonique (tordu) d'un feuilletage algébriquement intégrable est pseudo-effectif. Comme nous en aurons besoin dans la suite, nous énonçons dès maintenant un résultat impliquant la présence d'un diviseur additionnel. Nous adoptons ici la terminologie de \cite{Ca04,Ca11j} selon laquelle une structure orbifolde sur $X$ consiste en la donnée d'un $\QQ$-diviseur $\Delta$ dont le support est à croisements normaux et dont les coefficients sont compris entre 0 et 1.
\begin{theo}\label{th:positivité feuilletages}
Soit $\F$ un feuilletage algébriquement intégrable sur $X$ et supposons $X$ munie d'une structure orbifolde $\Delta$ telle que $K_X+\Delta$ soit pseudo-effectif. Le fibré canonique $K_\F+\Delta^{hor}$ est alors pseudo-effectif.
\end{theo}
Ce résultat (une reformulation de \cite[th. 3.3]{CP13}) est une généralisation de \cite[lem. 2.14]{Hor}. Nous recommandons également la lecture de \cite[\S 4]{D15} pour des résultats similaires mais s'exprimant en termes des singularités de $\F$.\\

Nous allons esquisser la démonstration du théorème \ref{th:positivité feuilletages} qui est une conséquence de résultats maintenant classiques concernant la positivité des images directes du fibré canonique tordu. L'énoncé suivant est essentiellement contenu dans \cite[th. 4.13]{Ca04}.

\begin{theo}\label{th:campana-fujino}
Soit $(X,\Delta)$ une structure orbifolde sur $X$. Pour toute fibration $f:X\to Z$ entre variétés projectives lisses et tout $m$ assez divisible, le faisceau $f_*\left(\mathcal{O}_X(m(K_{X/Z}+\Delta))\right)$ est faiblement positif.
\end{theo}
Il s'agit bien évidemment d'une généralisation au cas $\Delta\neq0$ des résultats de positivité de Viehweg \cite{V83}. Nous renvoyons d'ailleurs à \emph{loc. cit.} pour la notion de faible positivité puisque nous n'en ferons qu'un bref usage dans ce texte. Signalons d'ailleurs que les techniques permettant d'établir le théorème \ref{th:campana-fujino} remontent à l'article fondateur \cite{V83} mais que l'introduction d'une structure orbifolde en augmente considérablement la portée. Signalons également l'article \cite{Fuj14} dans lequel se trouve une exposition détaillée de ce cercle d'idées.

Nous utiliserons le théorème \ref{th:campana-fujino} à travers le corollaire suivant qui est peu ou prou équivalent à l'énoncé du théorème \ref{th:positivité feuilletages}.

\begin{coro}\label{cor:positivité avec ramification}
Soient $f:X\to Z$ une fibration entre variétés projectives lisses et $\Delta$ une structure orbifolde sur $X$ ayant la propriété suivante : $K_{X_z}+\Delta_{\mid X_z}$ est pseudo-effectif pour $z\in Z$ général. Le diviseur
$$K_{X/Z}+\Delta^{hor}-\ram(f)$$
est alors pseudo-effectif.
\end{coro}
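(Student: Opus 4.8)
The plan is to deduce the corollary from the Campana--Fujino positivity theorem (Theorem \ref{th:campana-fujino}) by the classical weak-positivity machinery of Viehweg, the only genuinely new ingredient being a careful bookkeeping of the ramification divisor. I would begin by recording the behaviour on a general fibre. Since $\ram(f)$ is vertical and supported over the branch locus, while $\Delta^{vert}$ does not meet $X_z$ for $z$ general, the restriction of $D:=K_{X/Z}+\Delta^{hor}-\ram(f)$ to a general fibre $X_z$ is exactly $K_{X_z}+\Delta_{\mid X_z}$, which is pseudo-effective by hypothesis. Thus $D$ has pseudo-effective restriction to the general fibre; the whole problem is to globalise this over $Z$.

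The key algebraic observation is that, writing $B\subset Z$ for the reduced locus over which the fibres are multiple or ramified, Definition \ref{defi:ramification} gives directly $-\ram(f)=(f^*B)_{red}-f^*B$. This is precisely the discrepancy that a Kawamata covering trick is designed to erase. I would therefore choose a finite cover $\pi\colon Z'\to Z$ from a smooth projective variety, ramifying along $B$ to an order divisible by all multiplicities occurring in the fibres of $f$ over $B$; let $X'$ be a desingularisation of the main component of $X\times_Z Z'$, with induced fibration $f'\colon X'\to Z'$ and finite morphism $\pi_X\colon X'\to X$. The covering trick makes the fibres of $f'$ over the preimage of $B$ reduced, which should translate into a numerical identity $K_{X'/Z'}+\Delta''=(\pi_X)^*D+E^+$, where $\Delta'':=(\pi_X)^*\Delta^{hor}$ is again an orbifold structure (its support is horizontal and pulls back cleanly) and $E^+\ge 0$ comes from the resolution. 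Verifying this identity --- that $(\pi_X)^*\ram(f)$ is exactly cancelled by the ramification of $\pi_X$ --- is the main computational content and must be carried out on each branch component separately.

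Next I would apply Theorem \ref{th:campana-fujino} to $(X',\Delta'')$. To guarantee the relevant direct image is nonzero, I would first replace $\Delta''$ by $\Delta''+A$ for a small effective ample $\QQ$-divisor $A$ with $(X',\Delta''+A)$ still an orbifold structure; then $(K_{X'/Z'}+\Delta''+A)_{\mid X'_z}$ is pseudo-effective plus ample, hence big and effective, so $f'_*\mathcal{O}_{X'}(m(K_{X'/Z'}+\Delta''+A))$ is nonzero and, by Theorem \ref{th:campana-fujino}, weakly positive for $m$ sufficiently divisible. The standard consequence of weak positivity --- that a line bundle a positive multiple of which is a relative quotient of the pullback of a nonzero weakly positive direct image is itself pseudo-effective --- then shows $K_{X'/Z'}+\Delta''+A$ is pseudo-effective on $X'$. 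Letting $A\to 0$ yields $(\pi_X)^*D+E^+$ pseudo-effective, hence $(\pi_X)^*D$ pseudo-effective since $E^+\ge 0$, and finally $D$ pseudo-effective because $\pi_X$ is finite and surjective.

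The main obstacle is the interface between the last two steps: converting weak positivity of the direct image sheaf into honest pseudo-effectivity of $K_{X'/Z'}+\Delta''$ on the total space. This is exactly where the ramification correction is indispensable --- without passing to reduced fibres, equivalently without subtracting $\ram(f)$, the direct image over-counts the positivity concentrated along $B$, and the evaluation map from the pullback of the direct image fails to dominate the line bundle. Matching up the covering-trick bookkeeping with this weak-positivity-to-pseudo-effectivity step is the crux; the fibrewise estimate and the limiting argument in $A$ are routine by comparison.
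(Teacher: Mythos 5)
Your overall route is the paper's own: a Kawamata covering trick on the base to erase the ramification (this is exactly Lemme \ref{lem:elimination ramification}), then Théorème \ref{th:campana-fujino} applied after a small ample perturbation to force the direct image to be nonzero, then a limit in $A$. But your concluding descent step fails as written. From the pseudo-effectivity of $K_{X'/Z'}+\Delta''=(\pi_X)^*D+E^+$ you cannot deduce that $(\pi_X)^*D$ is pseudo-effective \og since $E^+\ge 0$\fg : subtracting an effective divisor from a pseudo-effective one destroys pseudo-effectivity in general (on a blow-up, $0=(-E)+E$ is pseudo-effective while $-E$ is not). Note also that $\pi_X$ is not finite --- it is only generically finite, being the composition of a finite base change with a desingularisation of the main component of the fibre product --- and this is precisely the source of the divisor $E^+$; so the final appeal to \og finite and surjective\fg\ descent is not available either.

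The ingredient you are missing is the \emph{exceptionality}, not the sign, of $E^+$. The local computation behind the covering trick shows that $\pi_X$ is étale over the generic point of every component of $\ram(f)$, so the discrepancy $K_{X'/Z'}-(\pi_X)^*\left(K_{X/Z}-\ram(f)\right)$ is $\pi_X$-exceptional (no effectivity is needed, and the paper claims none). One then descends by pushing forward rather than subtracting: since exceptional divisors die under $(\pi_X)_*$, one gets
$$(\pi_X)_*\left(K_{X'/Z'}+\Delta''\right)\sim_\QQ \deg(\pi_X)\left(K_{X/Z}+\Delta^{hor}-\ram(f)\right),$$
and the direct image of a pseudo-effective class under a generically finite surjective morphism is again pseudo-effective, because such a pushforward carries effective divisors to effective divisors. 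This is exactly how the paper concludes, via the identity $\varphi_*(K_{\tilde{X}/\tilde{Z}})=\deg(\tilde{X}/X)(K_{X/Z}-\ram(f))$ of Lemme \ref{lem:elimination ramification}. With this replacement your argument coincides with the paper's; the remaining steps (the fibrewise restriction of $D$, the nonvanishing of the evaluation map after adding a small ample, the closedness of $\psef$ in the limit $A\to 0$, and the fact that $(\pi_X)^*\Delta^{hor}$ is again an orbifold structure because the ramification of $\pi_X$ is vertical) are sound.
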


Voyons comment déduire le théorème \ref{th:positivité feuilletages} du corollaire précédent. 
\begin{proof}[Démonstration du théorème \ref{th:positivité feuilletages}]
Soit $(X,\Delta)$ une paire avec $K_X+\Delta$ pseudo-effectif et $\F$ un feuilletage algébriquement intégrable. Considérons $\varphi:\hat{X}\to X$ une modification de $X$ pour laquelle $\hat{\F}$ s'identifie au feuilletage induit par $f:\hat{X}\to Z$. Nous supposerons que les applications vérifient les conditions de la proposition \ref{prop:canonique feuilletage intégrable}. La paire $(X,\Delta)$ étant en particulier log-canonique, nous pouvons supposer que l'application vérifie également :
\begin{equation}\label{eq:adjonction}
K_{\hat{X}}+\hat{\Delta}=\varphi^*(K_X+\Delta)+E
\end{equation}
où $E$ est effectif et $\varphi$-exceptionnel et où $\hat{\Delta}$ est un diviseur orbifolde vérifiant $\varphi_*(\hat{\Delta})=\Delta$. Nous avons alors bien évidemment $\varphi_*(\hat{\Delta}^{hor})=\Delta^{hor}$ (les notions d'horizontalité étant définies par rapport à $\hat{\F}$ et $\F$ respectivement). D'autre part, la formule (\ref{eq:adjonction}) montre que le diviseur $K_{\hat{X}_z}+\hat{\Delta}_{\vert \hat{X}_z}$ est pseudo-effectif pour $z\in Z$ général et nous pouvons appliquer le corollaire \ref{cor:positivité avec ramification} : le diviseur $K_{\hat{X}/Z}+\hat{\Delta}^{hor}-\ram(f)$ est lui-aussi pseudo-effectif. La proposition \ref{prop:canonique feuilletage intégrable} nous permet enfin de conclure que le diviseur
$$K_{\F}+\Delta^{hor}=\varphi_*(K_{\hat{X}/Z}+\hat{\Delta}^{hor}-\ram(f))$$
est bien pseudo-effectif.
\end{proof}

Pour finir, montrons comment obtenir le corollaire \ref{cor:positivité avec ramification} à partir du théorème \ref{th:campana-fujino}. C'est l'objet du lemme suivant qui permet en quelque sorte d'éliminer la ramification par un changement de base. Ce résultat est implicite dans les pages 570-571 de \cite{Ca04} (voir également \cite[lem. 4.2]{D15} et les références qui y sont mentionnées).
\begin{lemm}\label{lem:elimination ramification}
Soit $f:X\to Z$ une fibration entre variétés projectives lisses. Il existe alors un changement de base génériquement fini $\tilde{Z}\to Y$ tel que, pour toute désingularisation $\tilde{X}$ de la composante principale de $\tilde{Z}\times_Z X$, le diagramme
$$\xymatrix{\tilde{X}\ar[r]^\varphi\ar[d]_{\tilde{f}} & X\ar[d]^f \\
\tilde{Z}\ar[r] & Z}$$
vérifie : $\varphi_*(K_{\tilde{X}/\tilde{Z}})=\deg(\tilde{X}/X)(K_{X/Z}-\ram(f))$.
\end{lemm}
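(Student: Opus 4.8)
Le plan est de choisir le revêtement $\pi:\tilde Z\to Z$ assez ramifié pour \emph{absorber} les multiplicités des fibres de $f$ en codimension un. Notons $B\subset Z$ le lieu de branchement de $f$ en codimension un (la réunion des diviseurs premiers $D$ tels que $f^*D$ ne soit pas réduit) ; après l'avoir plongé dans un diviseur à croisements normaux, le lemme de revêtement de Kawamata fournit $\pi:\tilde Z\to Z$ fini avec $\tilde Z$ lisse projective, ramifié le long de chaque composante $\tilde D$ au-dessus d'un tel $D$ avec un indice divisible par le ppcm des multiplicités apparaissant dans $f^*D$. Posons $d:=\deg(\pi)=\deg(\tilde X/X)$. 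Comme $\pi$ et $\varphi$ sont génériquement finis entre variétés lisses, la formule de Hurwitz donne $K_{\tilde Z}=\pi^*K_Z+\ram(\pi)$ (l'opérateur $\ram$ coïncidant ici avec le diviseur de ramification usuel) et, par définition du diviseur relatif, $K_{\tilde X}=\varphi^*K_X+R_\varphi$ avec $R_\varphi$ effectif. En utilisant $\pi\circ\tilde f=f\circ\varphi$, on obtient
$$K_{\tilde X/\tilde Z}=\varphi^*K_{X/Z}+R_\varphi-\tilde f^*\ram(\pi),$$
puis, via la formule de projection,
$$\varphi_*K_{\tilde X/\tilde Z}=d\,K_{X/Z}+\varphi_*\left(R_\varphi-\tilde f^*\ram(\pi)\right).$$
Tout revient donc à établir l'identité $\varphi_*\left(R_\varphi-\tilde f^*\ram(\pi)\right)=-d\,\ram(f)$.

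Cette dernière étant une égalité entre diviseurs sur $X$, il suffit d'en comparer les coefficients le long de chaque diviseur premier, ce qui est une vérification locale au point générique. Plaçons-nous au-dessus d'un diviseur premier $D\subset Z$ avec $f^*D=\sum_i m_iE_i+\dots$, dans des coordonnées transverses où $f$ s'écrit $t=w^{m_i}$ le long de $E_i$ et $\pi$ s'écrit $t=s^{e_a}$ le long d'une composante $\tilde D_a$ au-dessus de $D$ (d'indice $e_a$). Le produit fibré est alors localement donné par $s^{e_a}=w^{m_i}$ ; comme $m_i\mid e_a$, sa normalisation possède $\gcd(e_a,m_i)=m_i$ branches lisses, chacune paramétrée (à une unité multiplicative près) par $\tau\mapsto(s,w)=(\tau,\tau^{e_a/m_i})$. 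Le long d'une telle branche $\tilde E$, l'indice de ramification de $\tilde f$ vaut donc $1$ — les fibres de $\tilde f$ sont devenues réduites, c'est là l'élimination de la ramification — tandis que celui de $\varphi$ vaut $e_a/m_i$, et le morphisme induit $\tilde E\to E_i$ est de degré $\delta_a:=\deg(\tilde D_a\to D)$.

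On calcule alors le coefficient de $E_i$. Le long de $\tilde E$, le coefficient de $\tilde f^*\ram(\pi)$ vaut $(e_a-1)\cdot 1$ et celui de $R_\varphi$ vaut $e_a/m_i-1$, de sorte que celui de $R_\varphi-\tilde f^*\ram(\pi)$ vaut $e_a/m_i-e_a$. En sommant sur les $m_i$ branches au-dessus de $\tilde D_a$ (chacune poussée avec le degré $\delta_a$), puis sur les composantes $\tilde D_a$ au-dessus de $D$, et en utilisant $\sum_a e_a\delta_a=d$, le coefficient de $E_i$ dans $\varphi_*(R_\varphi-\tilde f^*\ram(\pi))$ vaut
$$\sum_a m_i\left(\frac{e_a}{m_i}-e_a\right)\delta_a=-(m_i-1)\sum_a e_a\delta_a=-d(m_i-1),$$
soit exactement le coefficient de $-d\,\ram(f)$ le long de $E_i$. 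Le long d'un diviseur $D$ où $f^*D$ est déjà réduit ($m=1$, ce qui concerne en particulier les diviseurs auxiliaires introduits par la construction de Kawamata), le même calcul livre une contribution nulle, en accord avec $\ram(f)$. Enfin, tous les diviseurs $\varphi$-exceptionnels, notamment ceux provenant de la désingularisation choisie, sont annihilés par $\varphi_*$, ce qui assure simultanément l'indépendance vis-à-vis du modèle $\tilde X$ et la validité de l'identité voulue.

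Le point le plus délicat est la tenue rigoureuse de la comptabilité locale des indices de ramification dans le produit fibré : le calcul de pgcd qui fixe le nombre de branches et les indices de $\tilde f$ et $\varphi$, ainsi que le fait — aisé à fausser — que le degré de $\tilde E\to E_i$ est $\delta_a$ (et non $e_a/m_i$), la ramification étant portée par la direction transverse. L'autre ingrédient, plus technique mais standard, est l'existence du revêtement $\pi$ de ramification prescrite (lemme de Kawamata), dont on n'a véritablement besoin qu'au point générique de chaque $D$ ; les phénomènes en codimension supérieure (non-normalité du produit fibré, croisements du lieu de branchement, choix éventuel d'une modification rendant $\pi$ seulement génériquement fini) sont sans effet une fois $\varphi_*$ appliqué.
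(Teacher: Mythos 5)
Your proposal is correct and takes essentially the same route as the paper: a Kawamata cover of $Z$ ramified over each component $D$ of the branch locus at an order $k_D$ divisible by the ppcm des multiplicités $m_f(E)$, followed by a local computation at the generic points of the divisors lying over the branch locus, the leftover discrepancy being $\varphi$-exceptionnelle and hence killed by $\varphi_*$. If anything, your explicit bookkeeping is sharper than the paper's one-line justification (which asserts that $\varphi$ is étale above the general point of every component of $\ram(f)$ — literally true only when $m_i=k_D$), since your computation exhibits the required cancellation even when $\varphi$ ramifies with index $e_a/m_i>1$ along the divisors over $E_i$.
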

\begin{proof}[Esquisse de démonstration]
Quitte à changer de modèle birationnel, nous pouvons supposer que le lieu de branchement de $f$ est un diviseur à croisements normaux. Pour chaque composante $D\subset Z$ du lieu de branchement de $f$, nous considérons alors l'entier suivant (les notations font référence à la définition \ref{defi:ramification}) :
$$k_D:=\mathrm{ppcm}\left(m_f(E)\mid E\subset\ram(f)\,\mathrm{et}\, f(E)=D\right).$$
Nous pouvons donc considérer un revêtement ramifié $\tilde{Z}$ de $Z$ ramifiant à l'ordre $k_D$ au-dessus de $D$ pour toute composante $D$ (\emph{cf.} la proposition \ref{prop:exisence Kawa cover} ci-dessous). Si $\tilde{X}$ désigne une désingularisation comme dans l'énoncé, un calcul en coordonnées locales montre que la différence
$$K_{\tilde{X}/\tilde{Y}}-\varphi^*(K_{X/Y}-\ram(f))$$
est $\varphi$-exceptionnelle. Ceci vient du fait que, au-dessus d'un point général de toute composante de $\ram(f)$, l'application $\varphi$ est étale.
\end{proof}
\begin{proof}[Démonstration du corollaire \ref{cor:positivité avec ramification}]
Considérons un changement de base $\tilde{f}:\tilde{X}\to \tilde{Z}$ fourni par le lemme précédent. Quitte à modifier encore $\tilde{X}$, nous pouvons supposer que le support de $\tilde{\Delta}^{hor}$ est à croisements normaux. De plus, la restriction du fibré $K_{\tilde{X}/\tilde{Z}}+\tilde{\Delta}^{hor}$ à la fibre générale de $\tilde{f}$ reste bien pseudo-effective et, quitte à ajouter un diviseur ample arbitrairement petit, nous pouvons même supposer qu'elle est effective. Cela signifie en particulier que
$$\tilde{f}^*\tilde{f}_*\left(\mathcal{O}_X(m(K_{\tilde{X}/\tilde{Z}}+\tilde{\Delta}^{hor}))\right)\twoheadrightarrow \mathcal{O}_X\left(m(K_{\tilde{X}/\tilde{Z}}+\tilde{\Delta}^{hor})\right)$$
est non nul donc génériquement surjectif (pour $m$ suffisamment divisible).  D'après le théorème \ref{th:campana-fujino}, nous en déduisons que le $\QQ$-diviseur $K_{\tilde{X}/\tilde{Z}}+\tilde{\Delta}^{hor}$ est faiblement positif donc pseudo-effectif. En prenant l'image directe par l'application $\varphi:\tilde{X}\to X$, nous en déduisons que
$$K_{X/Z}+\Delta^{hor}-\ram(f)\sim_\QQ \varphi_*(K_{\tilde{X}/\tilde{Z}}+\tilde{\Delta}^{hor})$$
est également pseudo-effectif.
\end{proof}

\subsection{Pente par rapport à une classe mobile}\label{sub:pente}

Avant d'établir un critère général d'intégrabilité algébrique des feuilletages, nous introduisons les notions nécessaires, à savoir celles de classe mobile et de pente d'un faisceau par rapport à une telle classe.
\begin{defi}\label{defi:classe mobile}
Une classe $\alpha\in N_1(X)_\RR$ est dite \emph{mobile} si elle vérifie $\alpha\cdot D\ge0$ pour tout diviseur effectif $D$. Le cône convexe fermé formées des classes mobiles sera noté $\mob(X)$.
\end{defi}
Par définition, le cône $\mob(X)$ est donc dual du cône $\psef(X)$ des diviseurs pseudo-effectifs (clôture du cône des diviseurs effectifs). La terminologie \emph{mobile} est en grande partie motivée par le résultat principal de \cite{BDPP}.
\begin{theo}\label{th:BDPP}
Si $X$ est une variété projective lisse, le cône $\mob(X)$ est engendré par l'une ou l'autre des familles suivantes :
\begin{enumerate}
\item les classes de courbes $[C_t]$ où $(C_t)_{t\in T}$ est une famille couvrante.
\item les classes de la forme $\varphi_*(H_1\cap\dots\cap H_{n-1})$ où les $H_i$ sont des diviseurs amples sur l'espace total du morphisme birationnel $\varphi:\hat{X}\to X$ et $n=\dim(X)$.
\end{enumerate}
\end{theo}

Les classes mobiles constituent un cadre naturel pour les notions de stabilité des faisceaux. L'introduction de ce cadre remonte à l'article \cite{CPet}. Nous renvoyons également à \cite{GKP} pour un panorama complet, traitant en sus le cas où l'espace ambiant est normal. Rappelons qu'à tout faisceau cohérent sans torsion $\E$, nous avons associé son déterminant $\det(\E):=\left(\bigwedge^r\E\right)^{**}$ (avec $r$ le rang de $\E$). Cette construction s'étend naturellement aux faisceaux cohérents (voir \cite[chap.5, \S 6]{livreKob}) et est additive dans les suites exactes : si
$$0\To \E\To \F\To \mathcal{G}\To0$$
est une suite exacte de faisceaux cohérents, leurs déterminants vérifient
$$\det(\F)=\det(\E)+\det(\mathcal{G}).$$
\begin{defi}\label{defi:pente faisceau}
Si $\alpha\in\mob(X)$ est une classe mobile, la \emph{pente} d'un faisceau cohérent $\E$ de rang $r>0$ par rapport à $\alpha$ est donnée par
$$\mu_\alpha(\E):=\frac{1}{r}\det(\E)\cdot\alpha.$$
\end{defi}
Une des raisons pour lesquelles le cadre des classes mobiles semble bien adapté pour les questions de pentes et de stabilité vient du fait que la pente dépend \emph{linéairement} de $\alpha$ alors que, dans le cas \og classique\fg ~où $\alpha=H^{n-1}$, la dépendance en $H$ est plus compliquée. Il faut également noter que les classes mobiles bénéficient d'une souplesse birationnelle : l'image directe d'un diviseur effectif par une application génériquement fini étant effective, cela implique que l'image réciproque d'une classe mobile par une telle application est encore mobile.

Les propriétés suivantes sont standard (dans cet énoncé et dans la suite, les notations $\hat{\otimes}$ et $\hat{\sym}$ font référence au quotient par la torsion\footnote{Le produit tensoriel de deux faisceaux sans torsion ne l'est pas nécessairement.} des opérations usuelles de produit tensoriel et puissance symétrique).
\begin{prop}\label{prop:propriété de la pente}
Si $\alpha$ est une classe mobile et les faisceaux considérés sans torsion, nous avons alors :
\begin{enumerate}
\item $\mu_\alpha(\E^*)=-\mu_\alpha(\E)$.
\item si $\F\subset$ est saturé dans $\E$, alors
$$\mu_\alpha(\E)=\frac{\rk(\F)}{\rk(\E)}\mu_\alpha(\F)+\frac{\rk(\E)-\rk(\F)}{\rk(\E)}\mu_\alpha(\E/\F).$$
\item $\mu_\alpha(\E\hat{\otimes}\F)=\mu_\alpha(\E)+\mu_\alpha(\F)$.
\item $\mu_\alpha(\hat{\sym}^m\E)=m\mu_\alpha(\E)$ pour tout $m\ge1$.
\item $\mu_\alpha(\bigwedge^p\E/\mathrm{Tor})=p\mu_\alpha(\E)$ pour tout $1\le p\le \rk(\E)$.
\end{enumerate}
\end{prop}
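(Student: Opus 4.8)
Le plan est de ramener toutes ces égalités à deux ingrédients élémentaires : l'additivité du déterminant dans les suites exactes (rappelée plus haut) et le fait que le déterminant d'un faisceau cohérent, étant réflexif, ne dépend que du comportement en codimension $1$. Plus précisément, sur la variété lisse $X$, tout faisceau cohérent sans torsion est localement libre en dehors d'un fermé de codimension au moins $2$ ; je commencerais donc par fixer un ouvert $U\subset X$ avec $\codim(X\setminus U)\ge 2$ sur lequel tous les faisceaux $\E$, $\F$ en jeu (et donc aussi leurs produits tensoriels, puissances symétriques et extérieures) sont des fibrés vectoriels. Comme le nombre d'intersection $\det(\cdot)\cdot\alpha$ ne dépend que de la classe de diviseur, lue en codimension $1$, tous les calculs peuvent être menés sur $U$ ; notons au passage qu'aucune des identités n'utilise la mobilité de $\alpha$ (elle ne servira que pour les inégalités de stabilité).

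Les assertions (1) et (2) sont alors immédiates. Pour (2), la saturation de $\F$ assure que $\E/\F$ est sans torsion, de sorte que la suite exacte $0\to\F\to\E\to\E/\F\to0$ et l'additivité du déterminant donnent $\det(\E)=\det(\F)+\det(\E/\F)$ ; en intersectant avec $\alpha$ puis en divisant par $\rk(\E)$, et en reconnaissant $\det(\F)\cdot\alpha=\rk(\F)\mu_\alpha(\F)$ et $\det(\E/\F)\cdot\alpha=(\rk(\E)-\rk(\F))\mu_\alpha(\E/\F)$, on obtient exactement la moyenne pondérée annoncée. Pour (1), il suffit de rappeler que $\det(\E^*)=-\det(\E)$ et que $\E^*$ a même rang que $\E$, d'où $\mu_\alpha(\E^*)=-\mu_\alpha(\E)$.

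Les assertions (3), (4) et (5) relèvent toutes du même principe. Il faut d'abord observer que les opérations chapeautées ne diffèrent des opérations usuelles que par un sous-faisceau de torsion supporté dans $X\setminus U$, donc en codimension au moins $2$ ; comme le déterminant d'un tel faisceau de torsion est trivial, l'additivité entraîne par exemple $\det(\E\hat\otimes\F)=\det(\E\otimes\F)$, et de même pour $\hat\sym^m$ et $\bigwedge^p/\mathrm{Tor}$. On se ramène ainsi au calcul des déterminants des opérations usuelles sur les fibrés $\E_{\vert U}$, $\F_{\vert U}$, que le principe de scindage rend formel. En écrivant les racines de Chern $a_i$ (resp. $b_j$) de $\E$ (resp. $\F$), on a $c_1(\E)=\sum_i a_i$, et la pente $\mu_\alpha$ apparaît comme la moyenne des $a_i\cdot\alpha$. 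Or les racines de Chern de $\E\otimes\F$ sont les $a_i+b_j$, celles de $\hat\sym^m\E$ les sommes de $m$ racines (avec répétition) et celles de $\bigwedge^p\E$ les sommes de $p$ racines distinctes ; la moyenne de ces sommes vaut respectivement $\mu_\alpha(\E)+\mu_\alpha(\F)$, $m\,\mu_\alpha(\E)$ et $p\,\mu_\alpha(\E)$. Concrètement, cela revient aux identités $c_1(\E\otimes\F)=\rk(\F)c_1(\E)+\rk(\E)c_1(\F)$, $c_1(\hat\sym^m\E)=\tfrac{m}{r}\binom{m+r-1}{r-1}c_1(\E)$ et $c_1(\bigwedge^p\E)=\binom{r-1}{p-1}c_1(\E)$ (avec $r=\rk(\E)$), qu'on divise par les rangs respectifs $\rk(\E)\rk(\F)$, $\binom{m+r-1}{r-1}$ et $\binom{r}{p}$ pour conclure, en utilisant $\binom{r-1}{p-1}/\binom{r}{p}=p/r$.

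La seule véritable subtilité — et donc le principal point à soigner — est la comptabilité des déterminants face à la torsion : il faut s'assurer d'une part que le déterminant est bien additif pour des faisceaux cohérents arbitraires (et pas seulement sans torsion), ce pour quoi je renverrais à \cite[chap. 5, \S 6]{livreKob}, et d'autre part que la torsion introduite par les opérations chapeautées est effectivement de codimension au moins $2$, ce qui résulte de ce que $\E$ et $\F$ sont localement libres sur $U$. Une fois ces deux points acquis, tout le reste est purement formel.
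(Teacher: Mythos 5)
Votre démonstration est correcte. Signalons que le texte ne donne aucune preuve de cette proposition (il la qualifie de standard, le déterminant des faisceaux cohérents étant renvoyé à \cite[chap. 5, \S 6]{livreKob} et le cadre des pentes mobiles à \cite{CPet,GKP}) ; votre argument --- réduction à un ouvert $U$ de complémentaire de codimension $\ge 2$ (où l'isomorphisme $\mathrm{Pic}(X)\simeq\mathrm{Pic}(U)$, $X$ étant lisse, légitime de mener tous les calculs), additivité du déterminant, trivialité du déterminant de la torsion introduite par les opérations chapeautées car supportée hors de $U$, puis principe de scindage avec les identités $c_1(\E\otimes\F)=\rk(\F)c_1(\E)+\rk(\E)c_1(\F)$ et $\binom{r-1}{p-1}/\binom{r}{p}=p/r$ --- est précisément la voie standard suivie par ces références, et votre remarque que la mobilité de $\alpha$ n'y joue aucun rôle est exacte.
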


Nous aurons également besoin de considérer les quantités suivantes.
\begin{defi}\label{defi:mumax}
Si $\E$ est un faisceau sans torsion et $\alpha$ une classe mobile, sa pente maximale est par définition le maximum des pentes de ses sous-faisceaux :
$$\mu_\alpha^{max}(\E):=\sup\left\{\mu_\alpha(\F)\mid \F\subset\E\right\}.$$
De même, nous noterons
$$\mu_\alpha^{min}(\E):=\inf\left\{\mu_\alpha(\mathcal{Q})\mid \E\twoheadrightarrow\mathcal{Q}\right\}$$
la pente minimale des quotients\footnote{Comme un faisceau de torsion a un déterminant effectif (voir par exemple \cite[prop. 6.14]{livreKob}), on obtient la même quantité en ne considérant que les quotients sans torsion.} de $E$.
\end{defi}
\begin{rema}\label{rem:dualité mumin-mumax}
Les deux quantités précédentes sont duales l'une de l'autre :
$$\mu_\alpha^{max}(\E)=-\mu_\alpha^{min}(\E^*).$$
De même, il est aisé de vérifier que si $\F\subset \E$ est saturé dans $\E$ et si $\mu_\alpha(\F)=\mu_\alpha^{max}(\E)$ alors l'inégalité suivante est satisfaite :
$$\mu_\alpha^{max}(\E/\F)\le\mu_\alpha(\F)\le \mu_\alpha^{min}(\F).$$
\end{rema}
\begin{rema}\label{rem:pente saturation}
Si $\F\subset \mathcal{E}$ sont deux faisceaux de même rang, la différence $\det(\mathcal{E})-\det(\F)$ est effective et cela entraîne $\mu_\alpha(\mathcal{E})\ge \mu_\alpha(\F)$. Plus généralement, les pentes minimales/maximales suivent la même inégalité :
$$\mu^{min}_\alpha(\mathcal{E})\ge \mu^{min}_\alpha(\F)\quad\mathrm{et}\quad\mu^{max}_\alpha(\mathcal{E})\ge \mu^{max}_\alpha(\F).$$
Pour les pentes maximales, l'inégalité est évidente et, si $\mathcal{Q}$ est un quotient de $\mathcal{E}$, l'image de $\F$ dans $\mathcal{Q}$ est un sous-faisceau $\mathcal{Q}'\subset \mathcal{Q}$ de même rang et nous pouvons appliquer l'inégalité précédente.
\end{rema}
Nous utiliserons ces considérations de pentes principalement \emph{via} leurs conséquences sur les espaces de sections.
\begin{prop}\label{prop:annulation section}
Soient $\E$ et $\F$ deux faisceaux cohérents sans torsion. Si l'inégalité
$$\mu_\alpha^{min}(\E)>\mu_\alpha^{max}(\F)$$
est vérifiée, nous avons alors :
$$\mathrm{Hom}(\E,\F)=0.$$
En particulier, les sections globales  de $\F$ s'annulent
$$H^0(X,\F)=0$$
dès que $\mu_\alpha^{max}(\F)<0$.
\end{prop}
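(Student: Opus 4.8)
The plan is to establish the vanishing $\mathrm{Hom}(\E,\F)=0$ by contradiction, exploiting the incompatibility between the slope inequality and the existence of a nonzero morphism. Suppose $u\colon\E\to\F$ is a nonzero homomorphism of torsion-free sheaves. First I would let $\mathcal{I}:=\mathrm{im}(u)\subset\F$ be its image, which is a nonzero coherent subsheaf of $\F$, and I would saturate it: let $\bar{\mathcal{I}}$ be the saturation of $\mathcal{I}$ in $\F$. Since $\F$ is torsion-free, $\bar{\mathcal{I}}$ is a saturated subsheaf of positive rank $s:=\rk(\mathcal{I})=\rk(\bar{\mathcal{I}})>0$. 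On the source side, $\mathcal{I}$ is a quotient of $\E$, so by definition of the minimal slope we have $\mu_\alpha(\mathcal{I})\ge\mu_\alpha^{min}(\E)$, the quotient being nonzero and torsion-free after passing to $\mathcal{I}/\mathrm{Tor}$ if necessary. On the target side, $\bar{\mathcal{I}}$ is a subsheaf of $\F$, so $\mu_\alpha(\bar{\mathcal{I}})\le\mu_\alpha^{max}(\F)$.

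The remaining link is to compare $\mu_\alpha(\mathcal{I})$ with $\mu_\alpha(\bar{\mathcal{I}})$. Since $\mathcal{I}\subset\bar{\mathcal{I}}$ have the same rank $s$, remark \ref{rem:pente saturation} applies directly: the difference $\det(\bar{\mathcal{I}})-\det(\mathcal{I})$ is effective, hence $\mu_\alpha(\bar{\mathcal{I}})\ge\mu_\alpha(\mathcal{I})$. Chaining the three inequalities produces
$$\mu_\alpha^{min}(\E)\le\mu_\alpha(\mathcal{I})\le\mu_\alpha(\bar{\mathcal{I}})\le\mu_\alpha^{max}(\F),$$
which flatly contradicts the hypothesis $\mu_\alpha^{min}(\E)>\mu_\alpha^{max}(\F)$. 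Therefore no nonzero $u$ exists and $\mathrm{Hom}(\E,\F)=0$.

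For the final assertion about global sections, I would simply specialize: a global section $s\in H^0(X,\F)$ is exactly a morphism $\mathcal{O}_X\to\F$, so $H^0(X,\F)=\mathrm{Hom}(\mathcal{O}_X,\F)$. The structure sheaf $\mathcal{O}_X$ is a line bundle with $\det(\mathcal{O}_X)=0$, hence $\mu_\alpha^{min}(\mathcal{O}_X)=\mu_\alpha(\mathcal{O}_X)=0$ (it has rank one, so its only nonzero quotient up to torsion is itself). Applying the first part with $\E=\mathcal{O}_X$, the hypothesis $\mu_\alpha^{min}(\mathcal{O}_X)>\mu_\alpha^{max}(\F)$ reads $0>\mu_\alpha^{max}(\F)$, which is precisely the assumed condition $\mu_\alpha^{max}(\F)<0$.

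The only genuinely delicate point is the bookkeeping around torsion when invoking the minimal slope: the image $\mathcal{I}$ is a quotient of $\E$ but need not be one of the sheaves appearing literally in the definition of $\mu_\alpha^{min}$. Here the footnote to definition \ref{defi:mumax} does the work for me, since it records that $\mu_\alpha^{min}$ is unchanged whether one ranges over all quotients or only torsion-free ones (a torsion quotient has effective determinant). Thus I expect the main obstacle to be purely formal—ensuring that every slope comparison is applied to sheaves of the correct rank and that the saturation step is legitimate—rather than any substantive geometric input; the whole argument is a three-line inequality once the definitions of $\mu_\alpha^{max}$, $\mu_\alpha^{min}$, and remark \ref{rem:pente saturation} are in place.
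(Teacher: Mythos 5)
Your argument is correct and is precisely the standard one that the paper leaves implicit (Proposition \ref{prop:annulation section} is stated there without proof). The only remark worth making is that your saturation step is superfluous: the image $\mathcal{I}$ is already torsion-free as a subsheaf of the torsion-free sheaf $\F$, so $\mu_\alpha(\mathcal{I})\le\mu_\alpha^{max}(\F)$ holds directly and the chain $\mu_\alpha^{min}(\E)\le\mu_\alpha(\mathcal{I})\le\mu_\alpha^{max}(\F)$ already gives the contradiction, the specialization $\E=\mathcal{O}_X$ with $\mu_\alpha^{min}(\mathcal{O}_X)=0$ for the statement on global sections being likewise exactly right.
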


Notons que les notations (et appellations) pour les pentes minimales/maximales sont justifiées puisque ses bornes sont atteintes comme le montre la proposition suivante \cite[prop. 1.3]{CPet} (voir également \cite[cor. 2.25]{GKP}).
\begin{prop}\label{prop:existence déstabilisant max}
Si $\E$ est un faisceau cohérent et si $\alpha$ est une classe mobile, il existe alors un unique sous-faisceau $\F\subset\E$ maximal pour l'inclusion tel que :
$$\mu_\alpha(\F)=\mu_\alpha^{max}(\E).$$
En particulier, $\F$ est saturé dans $\E$. Ce sous faisceau sera appelé le \emph{déstabilisant maximal} de $\E$ (pour la classe $\alpha$).
\end{prop}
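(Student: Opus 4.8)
Il s'agit d'établir l'existence et l'unicité d'un sous-faisceau déstabilisant maximal $\F\subset\E$ réalisant $\mu_\alpha(\F)=\mu_\alpha^{max}(\E)$, maximal pour l'inclusion.

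Voici mon plan de démonstration.

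\begin{proof}[Esquisse de démonstration proposée]
Le plan est de procéder par une combinaison de bornitude des pentes, de passage à une borne supérieure et d'un argument de maximalité par union croissante. La première étape consiste à vérifier que $\mu_\alpha^{max}(\E)$ est \emph{fini}, c'est-à-dire que les pentes $\mu_\alpha(\F)$ des sous-faisceaux $\F\subset\E$ sont majorées. Pour cela, on se ramène au cas où $\E$ est sans torsion (un sous-faisceau de torsion a un déterminant effectif, donc ne peut qu'augmenter la pente d'un facteur borné en rang) puis on observe que tout sous-faisceau $\F\subset\E$ est saturé dans son saturé $\bar\F$, lequel vérifie $\mu_\alpha(\F)\le\mu_\alpha(\bar\F)$ d'après la remarque \ref{rem:pente saturation}. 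Il suffit donc de borner les pentes des sous-faisceaux \emph{saturés}. Or, fixant un diviseur très ample $H$, le degré $c_1(\bar\F)\cdot H^{n-1}$ d'un sous-faisceau saturé est majoré par un théorème de bornitude classique (type Grothendieck), la positivité de $\alpha$ comme classe mobile permettant ensuite de contrôler $\det(\bar\F)\cdot\alpha$ par ce degré. La finitude de $\mu_\alpha^{max}(\E)$ en résulte.

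La deuxième étape établit l'existence d'un sous-faisceau réalisant le supremum. On choisit une suite $(\F_n)$ de sous-faisceaux \emph{saturés} (quitte à saturer, ce qui ne diminue pas la pente) avec $\mu_\alpha(\F_n)\to\mu_\alpha^{max}(\E)$. La propriété clé est que la somme $\F+\F'$ de deux sous-faisceaux de pente $\ge\mu$ possède encore une pente $\ge\mu$ : cela découle de la suite exacte reliant $\F\oplus\F'$, $\F+\F'$ et $\F\cap\F'$ jointe à l'additivité du déterminant et à l'inégalité $\mu_\alpha(\F\cap\F')\le\mu_\alpha^{max}(\E)$. On peut donc remplacer la suite $(\F_n)$ par une suite \emph{croissante} de sous-faisceaux de pentes convergeant vers $\mu_\alpha^{max}(\E)$, dont la limite (réunion, saturée) $\F$ est un sous-faisceau cohérent — la cohérence de la réunion croissante étant assurée par la noethérianité, puisque les déterminants forment une suite croissante de diviseurs dans un réseau et se stabilisent. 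Ce faisceau $\F$ réalise le maximum.

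La troisième étape, qui est aussi \textbf{le point le plus délicat}, consiste à montrer l'unicité et la maximalité pour l'inclusion. L'argument central est que si $\F$ et $\F'$ réalisent tous deux $\mu_\alpha^{max}(\E)$, alors $\F+\F'$ le réalise encore : en effet $\mu_\alpha(\F+\F')\ge\mu_\alpha^{max}(\E)$ par l'étape précédente, et l'inégalité inverse est automatique par définition du supremum. Il existe donc un \emph{unique} sous-faisceau maximal pour l'inclusion réalisant la pente maximale, à savoir la réunion de tous les sous-faisceaux qui la réalisent (bien défini comme faisceau cohérent par le même argument de stabilisation noethérienne). La difficulté réelle réside dans le contrôle simultané des rangs et des déterminants dans l'inégalité de sommation : il faut s'assurer que l'intersection $\F\cap\F'$, qui peut avoir un rang strictement inférieur, ne fait pas \og fuir\fg{} la pente, ce qui repose précisément sur le fait que \emph{toute} pente de sous-faisceau est $\le\mu_\alpha^{max}(\E)$. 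Enfin, la saturation de $\F$ résulte de la remarque \ref{rem:dualité mumin-mumax} : si $\F$ n'était pas saturé, son saturé aurait une pente strictement supérieure (le quotient de torsion ayant déterminant effectif non trivial s'il est non nul), contredisant la maximalité de la pente.
\end{proof}
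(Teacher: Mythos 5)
Précision de cadrage d'abord : le texte ne démontre pas cette proposition, il la cite telle quelle (\cite[prop. 1.3]{CPet}, voir aussi \cite[cor. 2.25]{GKP}). Votre esquisse reconstruit en fait la démonstration standard de ces références : finitude de $\mu_\alpha^{max}(\E)$, lemme de sommation via la suite exacte $0\to\F\cap\F'\to\F\oplus\F'\to\F+\F'\to0$, stabilisation par n\oe thérianité d'une chaîne croissante, puis unicité et saturation. Les étapes 2 et 3 sont pour l'essentiel correctes, avec deux glissements bénins : le lemme de sommation tel que vous l'énoncez (\emph{pente} $\ge\mu$ \emph{préservée par somme}) est faux pour $\mu<\mu_\alpha^{max}(\E)$ — le calcul donne $\mu_\alpha(\F+\F')\ge\mu^{max}-C\varepsilon$ pour des sous-faisceaux $\varepsilon$-proches du maximum (avec $C$ contrôlée par $\rk(\E)$), et l'égalité exacte n'a lieu qu'en $\mu=\mu_\alpha^{max}(\E)$, ce qui est précisément la forme quantitative dont votre étape 2 a besoin ; et dans l'argument de saturation, le quotient de torsion peut être non nul avec déterminant \emph{trivial} (support de codimension $\ge2$), donc la pente du saturé n'est pas strictement supérieure — la bonne conclusion est que le saturé est encore un maximiseur contenant $\F$, ce qui contredit la maximalité \emph{pour l'inclusion} et non celle de la pente. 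Notez aussi que l'atteinte du supremum est une vraie subtilité pour une classe mobile réelle (les degrés $c_1\cdot\alpha$ ne forment pas un ensemble discret), de sorte que votre argument de chaîne croissante n'est pas un luxe ; la n\oe thérianité ordinaire (condition de chaîne ascendante pour les sous-faisceaux cohérents) suffit d'ailleurs, sans passer par votre stabilisation de déterminants dans un réseau, qui exigerait de toute façon une borne supérieure.

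La lacune véritable est dans l'étape 1. Majorer $c_1(\bar\F)\cdot H^{n-1}$ par le lemme de Grothendieck ne permet \emph{pas} de contrôler $c_1(\bar\F)\cdot\alpha$ : la mobilité de $\alpha$ ne fournit que $\alpha\cdot D\ge0$ pour $D$ effectif, et il n'existe aucune comparaison entre le $H^{n-1}$-degré et le $\alpha$-degré d'une classe arbitraire de $N^1(X)$. Sur $X=\PP^1\times\PP^1$ avec $H=F_1+F_2$ et $\alpha$ la classe d'une fibre de la première réglure, les classes $D_m=m(F_2-F_1)$ vérifient $D_m\cdot H=0$ mais $D_m\cdot\alpha=m\to\infty$ : un degré ample borné ne borne donc rien du côté de $\alpha$. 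Le remède standard consiste à faire jouer le mécanisme de Grothendieck \emph{directement contre} $\alpha$ : on dualise une surjection $\mathcal{O}_X(-kH)^{\oplus N}\twoheadrightarrow\E^*$ (pour $k$ assez grand) afin d'obtenir $\E\subset\E^{**}\hookrightarrow\mathcal{O}_X(kH)^{\oplus N}$ ; tout sous-faisceau $\F\subset\E$ de rang $r$ fournit alors un morphisme non nul $\det(\F)\to\mathcal{O}_X(rkH)$, donc $rkH-\det(\F)$ est effectif, et la mobilité de $\alpha$ donne $\mu_\alpha(\F)\le kH\cdot\alpha$. C'est là, et seulement là, que la positivité de $\alpha$ contre les diviseurs effectifs intervient ; votre détour par le $H^{n-1}$-degré, tel qu'écrit, échouerait.
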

Le résultat suivant sera crucial dans la suite pour pouvoir entre autre passer de $\Omega^1_X$ à ses puissances tensorielles (une exposition détaillée se trouve dans \cite[\S 4]{GKP}). Il repose \emph{in fine} sur un argument analytique (existence de métrique Hermite-Einstein sur les fibrés stables) qui est dû à M. Toma (voir l'appendice de \cite{CPet}).
\begin{theo}\label{th:mumax produit tensoriel}
Soient $\E$ et $\F$ deux faisceaux cohérents sans torsion et $\alpha$ une classe mobile sur $X$. La pente maximale du produit tensoriel est donnée par :
$$\mu_\alpha^{max}(\E\hat{\otimes}\F)=\mu_\alpha^{max}(\E)+\mu_\alpha^{max}(\F).$$
\end{theo}
\begin{proof}[Esquisse de démonstration]
Des manipulations générales montrent que l'énoncé ci-dessus est essentiellement équivalent au fait de montrer que le produit tensoriel de deux fibrés (faisceaux localement libres) $\alpha$-stables est $\alpha$-semi-stable. De plus, si la classe $\alpha$ est située sur le bord du cône $\mob(X)$, il est possible d'approcher $\alpha$ par des classes situées à l'intérieur de $\mob(X)$ pour lesquelles $\E$ et $\F$ sont encore stables. Il suffit donc de traiter ce dernier cas : $\E$ et $\F$ sont localement libres et $\alpha$ est une classe \emph{positive}. Dans ce cas, la classe $\alpha\in\mathrm{H}^{1,1}(X,\CC)$ s'écrit
$$\alpha=[\omega]^{n-1}$$
avec $\omega$ une métrique hermitienne sur $X$ vérifiant $\partial\bar{\partial}\omega^{n-1}=0$ (une telle métrique est appelée métrique de Gauduchon). Les fibrés $\E$ et $\F$ étant $\omega$-stables, ils admettent des métriques de Hermite-Einstein : c'est la correspondance de Kobayashi-Hitchin dans le cas non kählérien établie par Li et Yau dans \cite{LY}. Leur produit tensoriel admet donc une métrique de Hermite-Einstein et est donc également semi-stable.
\end{proof}

\subsection{Un critère d'intégrabilité algébrique}

Dans cette section, nous énonçons un critère d'intégrabilité algébrique pour les feuilletages qui est l'un des apports essentiels de \cite{CP15}.
\begin{theo}\label{th:critère mobile}
Soient $X$ une variété projective lisse et $\F\subset T_X$ un feuilletage. S'il existe $\alpha\in\mob(X)$ une classe mobile vérifiant $\mu_\alpha^{min}(\F)>0$, le feuilletage $\F$ est alors algébriquement intégrable et ses feuilles sont rationnellement connexes.
\end{theo}
Cet énoncé est dû à J.-B. Bost \cite{JBB} et à F. Bogomolov et M. McQuillan \cite{BM01} dans le cas où la classe $\alpha$ est la classe d'une courbe entièrement contenue dans le lieu régulier de $\F$ (et dans ce cas, la conclusion consiste à dire que les feuilles passant par les points de cette courbe sont algébriques et rationnellement connexes). On pourra consulter également \cite{KSCT}.

Nous reportons les applications directes de ce critère à la section \ref{sub:conséquences} et passons aux grandes lignes de la démonstration du théorème \ref{th:critère mobile}. Les paragraphes suivants sont dédiés respectivement à l'intégrabilité algébrique de $\F$ et à la connexité rationnelle des feuilles de ce dernier.

\subsubsection{Intégrabilité algébrique de $\F$}

Il est remarquable de constater que la démonstration de l'intégrabilité algébrique est une adaptation des arguments des articles sus-mentionnés et remontent, en essence, aux travaux de Hartshorne \cite[th. 6.7]{H68}. Pour simplifier la discussion, nous allons supposer que $\F$ est un feuilletage régulier, c'est-à-dire que $\F$ est un sous-fibré de $T_X$. En un point $x\in X$, nous pouvons donc considérer la feuille qui passe par $x$ : nous noterons $\F_x$ ce germe de sous-variété (analytique). De plus, si nous introduisons
$$\Lambda:=\set{(x,z)\in X\times X\mid z\in\F_x}\quad\mathrm{et}\quad V:=\bar{\Lambda}^{\mathrm{Zar}},$$
l'intégrabilité algébrique de $\F$ est alors équivalente à :
$$\dim(V)=\dim(X)+\rk(\F)=n+r.$$
Comme la dimension de $V$ est évidemment minorée par $n+r$, nous devons établir l'affirmation suivante.
\begin{affi}\label{affi:RiemannRoch}
Pour tout fibré en droites ample $L$ sur $X\times X$, il existe une constante $C:=C(L)>0$ telle que
$$\dim(H^0(V,L_{\mid V}^k))\le Ck^{n+r}$$
pour tout entier $k\ge0$.
\end{affi}
Comme nous ne disposons que d'informations sur $X$ et son plongement dans la variété analytique $\Lambda$, il est alors naturel de filtrer les sections de $L^k$ par l'ordre d'annulation le long de $X$. Si $\mathcal{I}_X$ désigne l'idéal définissant $X$ dans $\Lambda$ et si $m\ge 0$ est un entier, nous avons :
$$0\To L_{\vert \Lambda}^k\otimes\mathcal{I}_X^{m+1}\To L_{\vert \Lambda}^k\otimes\mathcal{I}_X^{m}\To L_{\vert X}^k\otimes\mathcal{I}_X^m/\mathcal{I}_X^{m+1}\simeq L_{\vert X}^k\otimes\sym^m(N^*_{X\mid\Lambda})\To 0.$$
Le fibré normal de $X$ dans $\Lambda$ s'identifie évidemment à $\F$ et nous obtenons la majoration suivante :
\begin{equation}\label{eq:sections formelles}
h^0(V,L^k)\le h^0(\Lambda,L^k)\le \sum_{m\ge0}h^0\left(X,L^k\otimes\sym^m(\F^*)\right).
\end{equation}
La première inégalité est bien entendu une conséquence du fait que $V$ est l'adhérence de Zariski de $\Lambda$. En combinant l'affirmation \ref{affi:RiemannRoch} avec l'inégalité (\ref{eq:sections formelles}) ci-dessus, nous constatons que l'intégrabilité algébrique de $\F$ est alors une conséquence de la proposition suivante.
\begin{prop}\label{prop:estimations sections}
Il existe une constante $C>0$ telle que
$$\sum_{m\ge0}h^0\left(X,L^k\otimes\sym^m(\F^*)\right)\le Ck^{n+r}$$
pour tout $k\ge0$.
\end{prop}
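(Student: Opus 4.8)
Le plan est d'exploiter l'hypothèse $\mu_\alpha^{min}(\F)>0$ pour ramener la somme \emph{a priori} infinie à une somme portant sur $O(k)$ termes, chacun de taille $O(k^{n+r-1})$, de sorte que le produit redonne la majoration voulue en $k^{n+r}$. Comme annoncé, nous supposons $\F$ régulier, de sorte que $\F^*$ est un fibré vectoriel et $\sym^m(\F^*)$ un facteur direct (en caractéristique nulle), donc un sous-faisceau, de ${\F^*}^{\otimes m}$.

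\textbf{Étape 1 (finitude de la somme).} En appliquant $m$ fois le théorème \ref{th:mumax produit tensoriel} puis la remarque \ref{rem:pente saturation} (les pentes maximales décroissent en passant à un sous-faisceau), j'obtiendrais
$$\mu_\alpha^{max}\big(\sym^m(\F^*)\big)\le\mu_\alpha^{max}\big({\F^*}^{\otimes m}\big)=m\,\mu_\alpha^{max}(\F^*)=-m\,\mu_\alpha^{min}(\F)=:-m\delta,$$
avec $\delta:=\mu_\alpha^{min}(\F)>0$ (remarque \ref{rem:dualité mumin-mumax}). Puisque tordre par le fibré en droites $L^k$ décale toutes les pentes de $k(L\cdot\alpha)$ (propriété 3 de la proposition \ref{prop:propriété de la pente}), il vient
$$\mu_\alpha^{max}\big(L^k\otimes\sym^m(\F^*)\big)\le k(L\cdot\alpha)-m\delta,$$
quantité strictement négative dès que $m>ck$ où $c:=(L\cdot\alpha)/\delta>0$ (on a $L\cdot\alpha>0$ car $\alpha\neq0$ est mobile et $L$ ample). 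La proposition \ref{prop:annulation section} donne alors $h^0(X,L^k\otimes\sym^m(\F^*))=0$ pour $m>ck$ : seuls les indices $0\le m\le ck$ contribuent à la somme.

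\textbf{Étape 2 (géométrisation et majoration uniforme).} J'introduirais le fibré projectif $\pi:P=\PP(\F^*)\to X$, de dimension $n+r-1$, muni du faisceau inversible relativement ample $\mathcal{O}_P(1)$ vérifiant $\pi_*\mathcal{O}_P(m)=\sym^m(\F^*)$ ; la formule de projection identifie alors $h^0(X,L^k\otimes\sym^m(\F^*))=h^0(P,\pi^*L^k\otimes\mathcal{O}_P(m))$. Il suffit d'établir une majoration \emph{uniforme} $h^0(P,\pi^*L^k\otimes\mathcal{O}_P(m))\le C_0\,k^{n+r-1}$ valable pour $0\le m\le ck$ : la sommation sur les $\le ck+1$ valeurs de $m$ fournira la borne en $k^{n+r}$. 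Pour l'obtenir, je fixerais un fibré ample $A$ sur $X$ tel que $\F^*\otimes A$ soit engendré par ses sections globales, ainsi qu'un entier $a_0\ge c$ assez grand pour que $H':=\mathcal{O}_P(a_0)\otimes\pi^*(L\otimes A^{a_0})$ soit ample sur $P$. Pour $0\le m\le ck$ (donc $a_0k-m\ge0$), le fibré en droites
$$(H')^k\otimes\big(\pi^*L^k\otimes\mathcal{O}_P(m)\big)^{-1}=\mathcal{O}_P(a_0k-m)\otimes\pi^*A^{a_0k}$$
a pour image directe $\sym^{a_0k-m}(\F^*)\otimes A^{a_0k}$, engendré par ses sections globales (produit d'une puissance symétrique de $\F^*\otimes A$ et d'un fibré ample) et non nul, donc effectif. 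La multiplication par une de ses sections injecte $H^0(P,\pi^*L^k\otimes\mathcal{O}_P(m))$ dans $H^0(P,(H')^k)$, d'où $h^0(P,\pi^*L^k\otimes\mathcal{O}_P(m))\le h^0(P,(H')^k)=O(k^{n+r-1})$, l'estimation finale résultant de la croissance asymptotique des sections des puissances d'un fibré ample sur la variété $P$ de dimension $n+r-1$ ; la constante est indépendante de $m$, ce qui conclut (quitte à ajuster $C$, la majoration globale en $Ck^{n+r}$ valant pour $k$ assez grand).

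\textbf{Point délicat.} L'ingrédient véritablement substantiel est le théorème \ref{th:mumax produit tensoriel}, de nature analytique, sans lequel on ne contrôlerait pas $\mu_\alpha^{max}(\sym^m(\F^*))$ et donc pas la finitude de l'étape 1 ; une fois ce contrôle acquis, le reste est élémentaire. La seule précaution à soigner est le caractère \emph{uniforme} en $m$ de la majoration de l'étape 2, que garantit la domination de tous les $\pi^*L^k\otimes\mathcal{O}_P(m)$ (pour $0\le m\le ck$) par la puissance fixe $(H')^k$ d'un même fibré ample.
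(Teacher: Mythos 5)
Votre démonstration est correcte et suit essentiellement la même voie que celle du texte : troncature de la somme aux indices $m\le ck$ grâce à $\mu_\alpha^{max}\left(L^k\otimes\sym^m(\F^*)\right)\le kL\cdot\alpha-m\mu_\alpha^{min}(\F)$ (théorème \ref{th:mumax produit tensoriel}, remarque \ref{rem:dualité mumin-mumax}) et à la proposition \ref{prop:annulation section}, puis passage au projectivisé $\PP(\F)$ (votre $\PP(\F^*)$ en convention de Grothendieck est le même objet) et domination \emph{uniforme} de $(p^*L)^k\otimes\mathcal{O}_\PP(m)$ par la puissance $k$-ième d'un fibré ample fixe, d'où une borne en $O(k^{n+r-1})$ par terme et $O(k^{n+r})$ après sommation sur les $O(k)$ valeurs de $m$. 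Seule retouche à faire dans votre étape 2 : prenez $A$ très ample (ou du moins globalement engendré) et non simplement ample, faute de quoi $A^m$ pourrait n'avoir aucune section pour $m$ petit et la génération globale de $\sym^{a_0k-m}(\F^*)\otimes A^{a_0k}$ tomberait en défaut — le texte contourne ce point en choisissant directement $A$ ample sur $\PP$ avec $A\ge p^*L$ et $A\ge\mathcal{O}_\PP(1)$.
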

\begin{proof}
Remarquons tout d'abord que la somme est en fait finie. En effet, nous pouvons calculer la pente (par rapport à $\alpha$) du fibré en question :
$$\mu^{max}_\alpha(L^k\otimes\sym^m(\F^*))=kL\cdot\alpha-m\mu^{min}_\alpha(\F)$$
(d'après le théorème \ref{th:mumax produit tensoriel} et la remarque \ref{rem:dualité mumin-mumax}).
Si $m>\dfrac{kL\cdot\alpha}{\mu^{min}_\alpha(\F)}$, la proposition \ref{prop:annulation section} donne l'annulation
$$h^0\left(X,L^k\otimes\sym^m(\F^*)\right)=0.$$
Il nous reste à contrôler les sections pour $m\le\dfrac{kL\cdot\alpha}{\mu^{min}_\alpha(\F)}$. Introduisons pour ce faire $\PP:=\PP(\F)$ la variété des droites projectives du fibré $\F$ et notons $p:\PP\to X$ la projection naturelle. Le fibré tautologique $\mathcal{O}_\PP(1)$ a la propriété suivante :
$$H^0\left(X,L^k\otimes\sym^m(\F^*)\right)=H^0\left(\PP,(p^*L)^k\otimes\mathcal{O}_\PP(m)\right).$$
Il suffit alors de considérer un diviseur ample $A$ sur $\PP$ vérifiant $A\ge p^*L$ ainsi que $A\ge \mathcal{O}_\PP(1)$. 
Si $N=\lceil\dfrac{L\cdot\alpha}{\mu^{min}_\alpha(\F)}\rceil$, nous avons alors
$$h^0\left(X,L^k\otimes\sym^m(\F^*)\right)= h^0\left(\PP,(p^*L)^k\otimes\mathcal{O}_\PP(m)\right)\le h^0\left(\PP,k(N+1)A\right)\le Ck^{n+r-1}$$
pour tout $m\le Nk$. Comme le choix de $A$ et la constante $N$ ne dépendent que de $L$ et $\F$ (et de $\alpha$), la constante $C$ ne dépend pas de $k$. En sommant cette dernière inégalité, nous obtenons
$$\sum_{m\ge0}h^0\left(X,L^k\otimes\sym^m(\F^*)\right)\le NCk^{n+r},$$
ce qui est bien l'estimation attendue.
\end{proof}

\begin{rema}\label{rem:cas F singulier}
La démonstration dans le cas général est similaire mais nécessite de se placer sur l'ouvert $X^\circ$ sur lequel $\F$ est régulier. En effet, la variété $\Lambda$ n'est définie que pour $x\in X^\circ$ et il faut donc filtrer les sections de $L^k$ par l'ordre d'annulation le long de $X^\circ$. Cela ne pose en réalité aucun problème puisque, les faisceaux considérés étant réflexifs, leurs sections globales sont déterminées par leur restriction à $X^\circ$. Nous renvoyons à \cite[\S 4.1]{CP15} pour les détails.
\end{rema}

\subsubsection{Connexité rationnelle des feuilles de $\F$}
Pour compléter la démonstration du théorème \ref{th:critère mobile}, il nous reste à établir la connexité rationnelle de feuilles de $\F$. L'approche proposée dans \cite{CP15} est nouvelle et en fournit une démonstration élégante.

Considérons pour ce faire un modèle $\varphi:\hat{X}\to X$ sur lequel le feuilletage $\hat{\F}$ s'identifie au tangent relatif d'une fibration $f:\hat{X}\to Y$ (et nous supposerons comme d'habitude que les diviseurs $f$-exceptionnels sont aussi $\varphi$-exceptionnels). Raisonnons par l'absurde et supposons que les fibres générales de $f$ ne sont pas rationnellement connexes. Nous pouvons alors considérer le quotient rationnel relatif de $f$ :
$$\xymatrix{X&\hat{X}\ar[l]_\varphi\ar[rr]^{r_{X/Y}}\ar[rd]_f && Z\ar[ld]^g\\
&&Y.&
}$$
Remarquons ici que, quitte à remplacer $\hat{X}$ par un autre modèle birationnel, nous pouvons supposer que $Z$ est lisse et que $r:=r_{X/Y}$ est un morphisme. Les fibres $Z_y$ pour $y$ général dans $Y$ s'identifiant à la base du quotient rationnel de $X_y$, elles ont un fibré canonique pseudo-effectif (conséquence des résultats principaux de \cite{GHS} et \cite{BDPP}). Nous pouvons alors appliquer le corollaire \ref{cor:positivité avec ramification} : $K_{Y/Z}-\ram(g)$ est pseudo-effectif. Or, nous savons que le feuilletage $\mathcal{G}:=T_{Z/Y}$ a pour fibré canonique (\emph{cf.} le paragraphe \ref{sub:rappel feuilletage}) :
$$K_\mathcal{G}=K_{Y/Z}-\ram(g)+E$$
avec $E$ un diviseur $g$-exceptionnel. En particulier, $r^*K_\mathcal{G}$ s'écrit comme la somme d'un diviseur pseudo-effectif et d'un diviseur $f$-exceptionnel, donc $\varphi$-exceptionnel. La contradiction vient alors du fait que $r^*(\mathcal{G})$  s'identifie à un quotient de $\hat{\F}$ (\emph{via} la différentielle de $r$) et des valeurs des pentes : $\mu^{min}_{\varphi^*\alpha}(\hat{\F})=\mu^{min}_\alpha(\F)>0$ et
$$\mu_{\varphi^*\alpha}(r^*\mathcal{G})=-\frac{r^*K_\mathcal{G}\cdot \varphi^*\alpha}{\rk(\mathcal{G})}\le0.$$

\subsection{Conséquences du critère d'intégrabilité algébrique}\label{sub:conséquences}

Le théorème \ref{th:critère mobile} permet de caractériser le fait d'être uniréglé par l'existence de feuilletages particuliers (il s'agit d'une reformulation de \cite{BDPP}).
\begin{coro}\label{cor:caractérisation uniréglée}
Une variété projective lisse $X$ est uniréglée si et seulement si il existe $\alpha\in\mob(X)$ telle que $\mu^{max}_\alpha(T_X)>0$.
\end{coro}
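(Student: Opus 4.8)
Le plan est de traiter séparément les deux implications, en s'appuyant pour le sens direct sur la dualité entre $\mob(X)$ et $\psef(X)$, et pour la réciproque sur le critère d'intégrabilité (théorème \ref{th:critère mobile}).

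Pour le sens direct, je supposerais $X$ uniréglée. D'après le résultat principal de \cite{BDPP}, cela équivaut à $K_X\notin\psef(X)$. Comme $\psef(X)$ est un cône convexe fermé et que $\mob(X)$ en est précisément le cône dual, un argument de séparation de type Hahn-Banach fournit une classe $\alpha\in\mob(X)$ vérifiant $K_X\cdot\alpha<0$. Puisque $K_X=-\det(T_X)$, ceci se réécrit $\det(T_X)\cdot\alpha>0$, c'est-à-dire $\mu_\alpha(T_X)>0$, d'où \emph{a fortiori} $\mu_\alpha^{max}(T_X)\ge\mu_\alpha(T_X)>0$.

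Pour la réciproque, je partirais d'une classe mobile $\alpha$ telle que $\mu_\alpha^{max}(T_X)>0$ et considérerais le déstabilisant maximal $\F\subset T_X$ fourni par la proposition \ref{prop:existence déstabilisant max}. Il est saturé et vérifie $\mu_\alpha(\F)=\mu_\alpha^{max}(T_X)=:\mu>0$; la remarque \ref{rem:dualité mumin-mumax} donne en outre $\mu_\alpha^{min}(\F)\ge\mu$ et $\mu_\alpha^{max}(T_X/\F)\le\mu$. L'étape centrale consiste à montrer que $\F$ est un feuilletage. Le crochet de Lie induit un morphisme $\mathcal{O}_X$-linéaire $\bigwedge^2\F\to T_X/\F$, et il suffit de l'annuler. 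Comme $\bigwedge^2\F$ est un quotient de $\F\hat{\otimes}\F$, on a $\mu_\alpha^{min}(\bigwedge^2\F)\ge\mu_\alpha^{min}(\F\hat{\otimes}\F)=2\mu_\alpha^{min}(\F)\ge2\mu$ (la dernière égalité venant de la version duale du théorème \ref{th:mumax produit tensoriel}). L'inégalité $2\mu>\mu\ge\mu_\alpha^{max}(T_X/\F)$ permet alors d'invoquer la proposition \ref{prop:annulation section}, qui donne $\mathrm{Hom}(\bigwedge^2\F,T_X/\F)=0$ : le morphisme s'annule et $\F$ est stable par crochet, donc est bien un feuilletage.

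Il ne resterait plus qu'à appliquer le théorème \ref{th:critère mobile} : disposant de $\mu_\alpha^{min}(\F)>0$, le feuilletage $\F$ est algébriquement intégrable à feuilles rationnellement connexes, de dimension $\rk(\F)\ge1$. La feuille générale passant par un point général de $X$, ces feuilles recouvrent $X$ par des sous-variétés rationnellement connexes de dimension strictement positive, donc $X$ est uniréglée. Je m'attends à ce que le point le plus délicat soit l'argument de feuilletage ci-dessus : l'essentiel de la profondeur est en réalité reportée sur le théorème \ref{th:critère mobile}, et la contribution propre de la preuve se réduit à la comptabilité des pentes permettant de s'y ramener.
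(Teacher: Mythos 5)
Votre démonstration est correcte et suit pour l'essentiel la même voie que le texte : même argument de pentes pour la réciproque (déstabilisant maximal, théorème \ref{th:mumax produit tensoriel} combiné à la remarque \ref{rem:dualité mumin-mumax}, annulation du crochet $\bigwedge^2\F\to T_X/\F$ via la proposition \ref{prop:annulation section}, puis théorème \ref{th:critère mobile}), et même utilisation de la dualité entre $\mob(X)$ et $\psef(X)$ issue de \cite{BDPP} pour le sens direct. Vos seules variantes (séparation explicite des cônes, passage par $\F\hat{\otimes}\F$ pour minorer $\mu^{min}_\alpha(\bigwedge^2\F)$) ne font qu'expliciter des points que le texte laisse implicites.
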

\begin{proof}
Si $\mu^{max}_\alpha(T_X)>0$ (pour une certaine classe mobile $\alpha$), considérons alors $\F$ le déstabilisant maximal de $T_X$ pour $\alpha$. D'après la remarque \ref{rem:dualité mumin-mumax} et le théorème \ref{th:mumax produit tensoriel}, nous avons alors :
$$\mu^{min}_\alpha(\bigwedge^2\F)=2\mu^{min}_\alpha(\F)>\mu_\alpha(\F)\ge \mu^{max}_\alpha(T_X/\F).$$
Le crochet de Lie $\bigwedge^2\F\to T_X/\F$ est donc nul d'après la proposition \ref{prop:annulation section} et $\F$ est bien un feuilletage à pente minimale (par rapport à $\alpha$) strictement positive. Le théorème \ref{th:critère mobile} s'applique et montre que $X$ est uniréglée.

Pour la réciproque, il suffit de constater que, si $X$ est uniréglée, alors $K_X$ n'est pas pseudo-effectif et il existe donc $\alpha\in\mob(X)$ telle que
$$\mu_\alpha(T_X)=-\frac{1}{\dim(X)}K_X\cdot\alpha>0.$$
\end{proof}

Ce critère d'intégrabilité algébrique fournit également des renseignements sur les feuilletages purement transcendants. Un feuilletage $\F$ sur $X$ est dit \emph{purement transcendant} s'il ne passe aucune sous-variété algébrique (de dimension strictement positive) tangente à $\F$ par le point général de $X$.
\begin{coro}\label{cor:feuilletage transcendant}
Le fibré canonique $K_\F$ d'un feuilletage purement transcendant sur $X$ est pseudo-effectif.
\end{coro}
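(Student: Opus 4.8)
Le plan consiste à raisonner par l'absurde et à fabriquer, à partir d'un défaut de pseudo-effectivité de $K_\F$, un sous-feuilletage algébriquement intégrable de $\F$, en contradiction avec la transcendance pure.

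Je commencerais par traduire la pseudo-effectivité en termes de pentes. Le cône $\mob(X)$ étant dual de $\psef(X)$, le diviseur $K_\F$ est pseudo-effectif si et seulement si $K_\F\cdot\alpha\ge0$ pour toute classe mobile $\alpha\in\mob(X)$. La relation $K_\F=K_X+\det(N_\F)$ rappelée au paragraphe \ref{sub:rappel feuilletage} donne $K_\F\cdot\alpha=-\rk(\F)\,\mu_\alpha(\F)$ ; nier la conclusion revient donc à disposer d'une classe $\alpha\in\mob(X)$ telle que $\mu_\alpha(\F)>0$.

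Fixons une telle classe et considérons le déstabilisant maximal $\mathcal{G}\subset\F$ de $\F$ relativement à $\alpha$ (proposition \ref{prop:existence déstabilisant max}). Comme $\mu^{max}_\alpha(\F)\ge\mu_\alpha(\F)>0$ et que $\mathcal{G}$ est semi-stable, il vérifie $\mu^{min}_\alpha(\mathcal{G})=\mu_\alpha(\mathcal{G})=\mu^{max}_\alpha(\F)>0$. L'étape suivante, qui reprend l'argument de la preuve du corollaire \ref{cor:caractérisation uniréglée}, consiste à voir que $\mathcal{G}$ est un feuilletage. Le point crucial — et la principale difficulté — est que, $\mathcal{G}$ étant contenu dans $\F$ qui est stable par crochet de Lie, l'obstruction $\mathcal{O}_X$-linéaire à l'involutivité de $\mathcal{G}$ se factorise à travers le quotient $\F/\mathcal{G}$ (et non seulement $T_X/\mathcal{G}$, dont la pente maximale échappe à tout contrôle) : elle définit un morphisme $\bigwedge^2\mathcal{G}\to\F/\mathcal{G}$. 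Or le théorème \ref{th:mumax produit tensoriel} joint à la remarque \ref{rem:dualité mumin-mumax} donne $\mu^{min}_\alpha(\bigwedge^2\mathcal{G})=2\mu_\alpha(\mathcal{G})$, alors que la saturation de $\mathcal{G}$ dans $\F$ entraîne $\mu^{max}_\alpha(\F/\mathcal{G})\le\mu_\alpha(\mathcal{G})<2\mu_\alpha(\mathcal{G})$. La proposition \ref{prop:annulation section} force alors l'annulation de ce morphisme, de sorte que $\mathcal{G}$ est stable par crochet de Lie, donc bien un feuilletage.

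Il ne resterait plus qu'à appliquer le critère d'intégrabilité. Le feuilletage $\mathcal{G}$ vérifiant $\mu^{min}_\alpha(\mathcal{G})>0$, le théorème \ref{th:critère mobile} montre qu'il est algébriquement intégrable : la feuille passant par un point général de $X$ est une sous-variété algébrique de dimension $\rk(\mathcal{G})\ge1$, tangente à $\mathcal{G}\subset\F$, donc à $\F$. Ceci contredit l'hypothèse que $\F$ est purement transcendant, et établit la pseudo-effectivité de $K_\F$.
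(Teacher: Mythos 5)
Votre preuve est correcte et suit essentiellement la même voie que celle du texte : négation de la pseudo-effectivité de $K_\F$ via une classe mobile $\alpha$ avec $\mu_\alpha(\F)>0$, passage au déstabilisant maximal $\mathcal{G}\subset\F$, annulation pour raison de pentes du morphisme $\bigwedge^2\mathcal{G}\to\F/\mathcal{G}$ induit par le crochet de Lie, puis application du théorème \ref{th:critère mobile} pour contredire la transcendance pure. Votre rédaction est simplement plus détaillée que celle du corollaire \ref{cor:feuilletage transcendant} (notamment sur le fait, laissé implicite dans le texte, que l'obstruction à l'involutivité se factorise bien par $\F/\mathcal{G}$ et non seulement par $T_X/\mathcal{G}$, ce qui est le point permettant le contrôle des pentes).
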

\begin{proof}
Dans le cas contraire, la pente de $\F$ par rapport à une classe mobile $\alpha$ doit être strictement positive. Le déstabilisant maximal $\mathcal{G}$ de $\F$ par rapport à cette classe doit alors être un feuilletage : pour une raison de pente, le morphisme
$$\bigwedge^2\mathcal{G}\To \F/\mathcal{G}$$
induit par le crochet de Lie doit être nul. Le théorème \ref{th:critère mobile} montre alors que $\mathcal{G}$ est un feuilletage algébriquement intégrable et les feuilles de $\mathcal{G}$ fournissent des sous-variétés tangentes à $\F$ ce qui constitue une contradiction.
\end{proof}

En guise de conclusion à cette première partie, nous énonçons le résultat correspondant au théorème principal dans le cas $\Delta=0$.
\begin{theo}\label{th:quotient feuilletage K-psef}
Soient $X$ une variété projective lisse, $\F$ un feuilletage avec $K_\F$ pseudo-effectif et  $m\ge1$ un entier.  Tout quotient de $(\F^*)^{\otimes m}$ a alors un déterminant pseudo-effectif.\\
En particulier, si $K_X$ est pseudo-effectif, tout quotient de $\left(\Omega^1_X\right)^{\otimes m}$ a un déterminant pseudo-effectif.
\end{theo}
\begin{proof}
Supposons par l'absurde que $\mu^{max}_\alpha(\F^{\otimes m})=m\mu^{max}_\alpha(\F)>0$. Le déstabilisant maximal $\mathcal{G}$ de $\F$ pour la classe $\alpha$ est alors un feuilletage (comme dans la démonstration du corollaire \ref{cor:feuilletage transcendant}) de pente minimale positive : $\mathcal{G}$ est donc algébriquement intégrable d'après le théorème \ref{th:critère mobile}. Quitte à modifier $X$, nous supposerons que $\mathcal{G}$ est donné par une fibration $f:X\to Z$. Comme $\mathcal{G}=T_{X/Z}$ est tangent à $\F$, le feuilletage $\F$ descend\footnote{Remarquons que l'intégrabilité est essentielle pour pouvoir affirmer que $\F$ provient de $Z$.} en un feuilletage $\mathcal{H}$ sur $Z$ (voir par exemple \cite[lem. 2.4]{LPT}). Soient alors $z\in Z$ un point général de $Z$ et $V$ un voisinage de $z$ sur lequel $\mathcal{H}$ est régulier (et donc le faisceau correspondant trivial). Si $F$ désigne la fibre générale de $f$ au-dessus de $z$ et $U=f^{-1}(V)$ un voisinage de celle-ci, nous avons alors :
$$0\To \mathcal{G}_{\vert U}\To \F_{\vert U}\To \mathcal{O}_{U}^{\oplus q}\To 0$$
pour un certain entier $q$. En particulier, ceci montre que $(K_\F)_{\vert F}\simeq (K_{\mathcal{G}})_{\vert F}\simeq K_F$ mais ceci est impossible : $(K_\F)_{\vert F}$ est pseudo-effectif alors que $K_F$ ne l'est pas car $F$ est rationnellement connexe d'après le théorème \ref{th:critère mobile}.
\end{proof}
Ce théorème montre en particulier que, si $K_X$ est pseudo-effectif, tout feuilletage (algébriquement intégrable ou non) a un fibré canonique pseudo-effectif : la conclusion du théorème \ref{th:positivité feuilletages} est donc encore valable sans l'hypothèse d'intégrabilité algébrique (dans le cas $\Delta=0$). Un des objectifs de la partie suivante va consister à généraliser ce phénomène au cas $\Delta\neq 0$.

\section{Fibré (co)tangent adapté à une structure orbifolde}\label{sec:orbitangent}

Nous allons introduire ici un \og fibré (co)tangent\fg ~associé à une paire $(X,\Delta)$. Les guillemets reflètent le fait que ce fibré n'est pas défini sur $X$ mais sur un revêtement de $X$ vérifiant certaines conditions de compatibilité avec le diviseur $\Delta$. Parallèlement au théorème \ref{th:quotient feuilletage K-psef}, le fibré cotangent orbifolde va hériter des propriétés de positivité du $\QQ$-diviseur $K_X+\Delta$ : c'est le contenu du théorème \ref{th:positivité orbifolde} ci-dessous.

\subsection{Revêtement $\Delta$-adapté}\label{ssec:rev adapté}

Nous commençons par introduire quelques notations qui seront omniprésentes dans cette partie. Rappelons qu'une \emph{orbifolde} est la donnée d'une paire log-lisse $(X,\Delta)$ avec $X$ lisse, le support de $\Delta$ à croisements normaux et les coefficients de $\Delta$ étant des rationnels compris entre 0 et 1. Un tel diviseur s'écrit donc de manière unique :
$$\Delta=\sum_{i\in I}(1-\frac{b_i}{a_i})\Delta_i$$
avec les conventions suivantes :
\begin{itemize}
\item si $1-\dfrac{b_i}{a_i}<1$, les entiers $a_i$ et $b_i$ sont premiers entre eux et vérifient $0<b_i<a_i$.
\item si $1-\dfrac{b_i}{a_i}=1$, nous posons $a_i=1$ et $b_i=0$.
\end{itemize}
La terminologie \emph{orbifolde} est en partie motivée par le fait que les objets que nous allons définir \og vivent\fg ~sur des revêtements adaptés au diviseur $\Delta$. 
\begin{defi}\label{defi:rev adapté}
Un revêtement adapté à la paire $(X,\Delta)$ (ou encore $\Delta$-adapté) est la donnée d'un revêtement ramifié (plat) et galoisien $\pi:Y\to X$ vérifiant les conditions suivantes :
\begin{enumerate}
\item $Y$ est une variété projective lisse.
\item $\pi$ ramifie exactement à l'ordre $a_i$ au-dessus de $\Delta_i$ : on a une décomposition $\pi^*(\Delta_i)=\sum_{j\in J(i)}a_iD^{(i)}_j$.
\item le support du diviseur $\pi^*(\Delta)+\ram(\pi)$ est à croisements normaux, ainsi que celui du lieu de branchement.
\end{enumerate}
\end{defi}
\begin{rema}\label{rem:def rev adapté}
Nous pouvons remarquer que seuls les entiers $a_i$ jouent un rôle dans la définition ci-dessus. De plus, remarquons que la condition 2 dans la définition ci-dessus signifie que $\pi$ ne ramifie pas au-dessus des composantes ayant coefficient 1 dans $\Delta$.
\end{rema}
\noindent Il est bien connu que de tels revêtements existent, voir par exemple \cite[prop. 4.1.12]{Laz}.
\begin{prop}\label{prop:exisence Kawa cover}
Toute orbifolde $(X,\Delta)$ possède des revêtements $\Delta$-adaptés.
\end{prop}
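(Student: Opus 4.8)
The statement to prove is Proposition~\ref{prop:exisence Kawa cover}: every orbifolde $(X,\Delta)$ admits $\Delta$-adapted covers. Let me sketch a proof strategy.

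=== PROOF PROPOSAL ===

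The plan is to build the cover by combining a Bloch--Gieseker type construction (to handle the ramification orders $a_i$ one divisor at a time, or simultaneously) with Galois closure and resolution of singularities. First I would reduce to producing, for each component $\Delta_i$ with $a_i>1$, a cover ramifying to order $a_i$ over $\Delta_i$. The cleanest tool is the classical Kawamata--Bloch--Gieseker covering lemma (as in \cite[prop.~4.1.12]{Laz}): after choosing a sufficiently ample line bundle $A$ on $X$ and a general member $H\in\abs{A^{\otimes a_i}}$, one can construct a finite flat cover ramifying to order $a_i$ precisely along $\Delta_i$, using the fact that $\mathcal{O}_X(\Delta_i)$ becomes an $a_i$-th power after suitable twisting. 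Taking the fibre product of these covers over $X$, indexed by $i\in I$ with $a_i>1$, produces a single finite flat morphism $Y_0\to X$ ramifying to the prescribed orders simultaneously over each $\Delta_i$.

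The next step is to make the cover Galois. I would replace $Y_0\to X$ by its Galois closure $\pi_1:Y_1\to X$, which is still finite and flat (flatness over the smooth base $X$ follows from the Cohen--Macaulay property once we know the total space is normal, or one argues directly on the normalization). The ramification orders over each $\Delta_i$ are preserved as $a_i$ since the Galois group acts transitively on the sheets and the Bloch--Gieseker construction was designed so that $\pi_1^*(\Delta_i)$ has all multiplicities equal to $a_i$; this yields condition~(2), the decomposition $\pi^*(\Delta_i)=\sum_{j\in J(i)}a_i D_j^{(i)}$. A key point to verify here is that forming the fibre product and the Galois closure does not create ramification of order divisible by a larger multiple of $a_i$ over $\Delta_i$ — this is exactly where the genericity in the Bloch--Gieseker lemma is used, guaranteeing that the auxiliary branch divisors $H$ meet $\Delta$ transversally and contribute no extra ramification along $\Delta_i$ itself.

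Finally, condition~(1) (smoothness of $Y$) and condition~(3) (normal crossings of $\pi^*(\Delta)+\ram(\pi)$ and of the branch locus) are achieved by resolution of singularities. The total space $Y_1$ may be singular, so I would take an equivariant resolution $Y\to Y_1$ (Hironaka, in a form respecting the Galois action) such that the total transform of $\pi^*(\Delta)+\ram(\pi_1)$ together with the exceptional locus becomes a simple normal crossings divisor; simultaneously one arranges the branch locus downstairs to be normal crossings by further blowing up $X$ if necessary, which one may do since we are only asked for the \emph{existence} of some adapted cover. The composite $\pi:Y\to X$ is then the desired $\Delta$-adapted cover. The main obstacle is the second step: controlling the ramification orders exactly through the fibre-product and Galois-closure operations, since a priori these operations can increase ramification multiplicities. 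This is precisely why the flexible Bloch--Gieseker input — where one has freedom to choose the ample class $A$ and a general divisor in $\abs{A^{\otimes a_i}}$ — is indispensable, and it is the reason the statement is nontrivial rather than a direct application of Kummer theory.
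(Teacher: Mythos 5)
Your first two steps (Bloch--Gieseker root extraction for each $\Delta_i$ with $a_i>1$, fibre product over $X$) follow the classical Kawamata construction, which is exactly what the paper invokes: it gives no proof and simply cites \cite[prop.~4.1.12]{Laz} as well known. But your proposal has a genuine gap in its final step, and it is fatal to the statement as defined. Definition~\ref{defi:rev adapté} requires $\pi:Y\to X$ to be a \emph{finite flat} Galois cover \emph{of the given $X$} with $Y$ smooth. An equivariant resolution $Y\to Y_1$ composed with $Y_1\to X$ is proper and generically finite but no longer finite, hence no longer flat: a flat proper morphism between irreducible varieties of the same dimension has equidimensional (here zero-dimensional) fibres, whereas a nontrivial resolution contracts something positive-dimensional. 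Flatness is not a cosmetic requirement here --- it is used repeatedly downstream, e.g.\ throughout the proof of Lemma~\ref{lem:sous faisceau G-invariant}. Condition~(2) also breaks after resolution: exceptional divisors lying over $\Delta_i$ enter $\pi^*\Delta_i$ with uncontrolled multiplicities, destroying the decomposition $\pi^*(\Delta_i)=\sum_j a_i D^{(i)}_j$. Likewise, ``further blowing up $X$'' is not available to you: the cover must be adapted to the fixed pair $(X,\Delta)$, so $X$ cannot be modified. Finally, your flatness argument for the Galois closure (``normal, hence Cohen--Macaulay'') is false in dimension $\ge 3$.

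The correct route --- and the one behind the cited reference --- never resolves singularities: smoothness is maintained at every stage by Bertini-type genericity. For each $i$ one chooses general auxiliary divisors $H_{i,j}$ so that every elementary cyclic cover is ramified along smooth divisors meeting $\Delta$ and one another transversally; a local coordinate computation then shows the (normalized) fibre product is already smooth, and it is Galois \emph{by construction} (a connected fibre product of cyclic covers, with group a product of groups $\ZZ/a_i\ZZ$), so no Galois closure is needed either. Finiteness plus smoothness of the total space over the smooth base gives flatness (finite with Cohen--Macaulay source over a regular target), and the branch locus is contained in $\Delta\cup\bigcup_{i,j}H_{i,j}$, which is a normal crossings divisor by genericity --- yielding condition~(3) without ever touching $X$. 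In short: where you invoke Galois closure and resolution to repair the cover a posteriori, the classical construction arranges Galoisness, smoothness, flatness and transversality a priori, and this is precisely why it satisfies the rigid requirements of Definition~\ref{defi:rev adapté}.
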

Ces revêtements ont une description locale extrêmement simple qui aura l'avantage de rendre les calculs aussi transparents que possible.
\begin{prop}\label{prop:formes normales rev}
Soient $\pi:Y\to X$ un revêtement $\Delta$-adapté et $y\in Y$ un point quelconque. Il existe alors un voisinage $y\in U$ invariant par $G_y$ (le groupe d'isotropie de $y$) et des coordonnées $(w_1,\dots, w_n)$ centrées en $y$ (\emph{resp.} $(z_1,\dots, z_n)$ centrées en $\pi(y)$) tels que l'application $\pi$ ait la description suivante :
$$\pi(w_1\dots,w_n)=(w_1^{a_1},\dots,w_k^{a_k},w_{k+1},\dots,w_{n-j},w_{n-j+1}^{m_j},\dots,w_n^{m_1}).$$
Dans l'écriture ci-dessus, nous avons bien entendu supposé que $\pi(\{w_i=0\})\subset \Delta_i$ pour tout $i=1\dots k$ et que $\cup_{\ell=1\dots j}\{w_{n-\ell+1}=0\}\subset \ram(\pi)\setminus \pi^{-1}(\Delta)$.
\end{prop}

\subsection{Fibré cotangent orbifolde associé à un revêtement}\label{sub:cotangent}

Nous commençons par définir le fibré cotangent orbifolde : celui-ci a pour mission de donner un sens aux symboles formels $\dfrac{\dd z_i}{z_i^{(1-b_i/a_i)}}$. Pour cela, nous adoptons le point de vue de Miyaoka \cite[p. 412]{Mi08}. Considérons alors $\pi:Y\to X$ un revêtement $\Delta$-adapté et introduisons l'application de résidu sur la variété $X$ :
$$\Omega^1_X(\log(\lceil\Delta\rceil))\stackrel{\mathrm{res}}{\longrightarrow} \bigoplus_{i\in I}\mathcal{O}_{\Delta_i}\to 0.$$
Nous pouvons prendre son image réciproque par $\pi^*$ :
$$\pi^*\Omega^1_X(\log(\lceil\Delta\rceil))\stackrel{\pi^*\mathrm{res}}{\longrightarrow} \bigoplus_{i\in I}\mathcal{O}_{\pi^*(\Delta_i)}\to 0.$$
Remarquons maintenant que, pour tout $i\in I$, l'expression $D_Y^{(i)}:=\dfrac{1}{a_i}\pi^*(\Delta_i)$ définit un diviseur entier qui vérifie de plus $b_iD_Y^{(i)}\le \pi^*(\Delta_i)$. En particulier, l'application de passage au quotient
$$\mathcal{O}_{\pi^*(\Delta_i)}\twoheadrightarrow\mathcal{O}_{b_iD_Y^{(i)}}$$
est licite. 
\begin{defi}\label{defi:orbi cotangent}
Le faisceau cotangent orbifolde associé à $\pi$ est par définition le sous-faisceau de $\pi^*\Omega^1_X(\log(\lceil\Delta\rceil))$ rendant exacte la suite :
\begin{equation}\label{eq:defi cotangent}
0\To \Omega^1(\pi,\Delta)\To \pi^*\Omega^1_X(\log(\lceil\Delta\rceil))\stackrel{\pi^*\mathrm{res}}{\To} \underset{i\in I}{\bigoplus}\mathcal{O}_{b_iD^{(i)}_Y}\To 0.
\end{equation}
Ce faisceau est donc clairement stable sous l'action de $G=\mathrm{Gal}(\pi)$.
\end{defi}
Ce faisceau est noté $\pi^*\Omega^1(X,\Delta)$ dans \cite{CP13,CP15}. Nous avons opté pour la notation ci-dessus pour insister sur le fait que celui-ci n'existe que sur l'espace total de $\pi$. De plus, comme $\pi$ ne dépend de $\Delta$ qu'à travers les $a_i$, $\Delta$ réapparaît dans la notation pour tenir compte des $b_i$ (ordre d'annulation des sections du cotangent orbifolde le long de $\pi^*(\Delta_i)$).

\begin{rema}\label{rem:inclusion}
Par définition, nous avons donc une chaîne d'inclusions :
$$\pi^*\Omega^1_X\subset \pi^*\Omega^1_X(\log(\lfloor\Delta\rfloor))\subset \Omega^1(\pi,\Delta)\subset\pi^*\Omega^1_X(\log(\lceil\Delta\rceil)).$$
\end{rema}

\begin{rema}\label{rem:générateurs locaux du cotangent}
Dans des coordonnées adaptées au diviseur $\Delta$, nous pouvons donner une description explicite des générateurs locaux de $\Omega^1(\pi,\Delta)$. En effet, en utilisant les notations de la proposition \ref{prop:formes normales rev}, il est immédiat de constater que $\Omega^1(\pi,\Delta)$ est localement engendré par :
$$(w_1^{b_1-1}\dd w_1,\dots,w_k^{b_k-1}\dd w_k,\dd w_{k+1},\dots,\dd w_{n-j},w_{n-j+1}^{m_j-1}\dd w_{j+1},\dots, w_n^{m_1-1}\dd w_n).$$
En particulier, le faisceau $\Omega^1(\pi,\Delta)$ est localement libre et c'est pourquoi nous l'appelons \emph{fibré} cotangent orbifolde.
\end{rema}
Comme mentionné ci-dessus, le cotangent orbifolde est donc bien engendré par les images réciproques des formes multivaluées $\dfrac{\dd z_i}{z_i^{(1-b_i/a_i)}}$.

\begin{rema}\label{rem:determinant cotangent}
Une fois définie la notion de fibré cotangent orbifolde, il est naturel d'en considérer les puissances tensorielles $\Omega^1(\pi,\Delta)^{\otimes m}$, symétriques $\sym^m(\Omega^1(\pi,\Delta))$ ou alternées $\Omega^p(\pi,\Delta):=\bigwedge^p\Omega^1(\pi,\Delta)$. En particulier, le fibré $\Omega^1(\pi,\Delta)$ a pour déterminant :
$$\det(\Omega^1(\pi,\Delta))=\Omega^n(\pi,\Delta)=\pi^*(K_X+\Delta).$$
C'est une conséquence immédiate de la suite exacte (\ref{eq:defi cotangent}).
\end{rema}

Le résultat principal de \cite{CP15} est une extension au cas orbifolde du théorème \ref{th:quotient feuilletage K-psef}.
\begin{theo}\label{th:positivité orbifolde}
Soient $(X,\Delta)$ une orbifolde lisse avec $K_X+\Delta$ pseudo-effectif et $\pi:Y\to X$ un revêtement $\Delta$-adapté. Le fibré $\Omega^1(\pi,\Delta)$ a alors la propriété suivante :\\
pour tout entier $m\ge1$, toute classe mobile $\alpha\in\mob(X)$ et pour tout quotient
$$\Omega^1(\pi,\Delta)^{\otimes m}\twoheadrightarrow \mathcal{Q},$$
la pente de $\mathcal{Q}$ par rapport à $\pi^*\alpha$ est positive $\mu_{\pi^*\alpha}(\mathcal{Q})\ge0$.
\end{theo}
Les paragraphes suivants vont être consacrés à la démonstration du théorème ci-dessus, démonstration qui va nécessiter une étude détaillée de certains sous-faisceaux du fibré tangent orbifolde.

\subsection{Fibré tangent orbifolde et caractérisation de ses sections}

\begin{defi}\label{defi:tangent orbi}
Le fibré tangent orbifolde associé à $\pi$ est le dual du fibré cotangent orbifolde et est noté $T(\pi,\Delta)$. Il s'insère dans la suite d'inclusions :
$$\pi^*T_X(-\log(\lceil\Delta\rceil))\subset T(\pi,\Delta)\subset \pi^*T_X(-\log(\lfloor\Delta\rfloor))\subset \pi^*T_X.$$
Il est de plus engendré localement par les éléments :
$$(w_1^{a_1-b_1}\partial_1,\dots,w_k^{a_k-b_k}\partial_k,\partial_{k+1},\dots,\partial_{n-j},\partial_{n-j+1},\dots,\partial_n),$$
où nous avons noté $\partial_\ell:=\pi^*(\dfrac{\partial}{\partial z_\ell})$.
\end{defi}

\begin{rema}\label{rem:différentielle}
Les expressions des générateurs locaux rapportées dans la remarque \ref{rem:générateurs locaux du cotangent} et la définition \ref{defi:tangent orbi} peuvent sembler asymétriques. Cela vient du fait que, lorsque nous identifions (par exemple) $\pi^*\Omega^1_X$ à un sous-faisceau de $\Omega^1_Y$, nous faisons implicitement intervenir l'action de la différentielle $\dd\pi$. Ce n'est plus le cas lorsque nous nous plaçons sous l'angle du fibré tangent : l'écriture $\pi^*T_X$ désigne bel et bien l'image réciproque du fibré tangent par l'application $\pi$ (et dans cette situation duale, c'est le fibré $T_Y$ qui apparaît comme un sous-faisceau de $\pi^*T_X$ \emph{via} la différentielle de $\pi$).
\end{rema}

Il est intéressant de noter que, d'une certaine manière, les sections du fibré tangent orbifolde associé à $\pi$ admettent une caractérisation provenant de $X$. Pour cela, considérons des coordonnées adaptées comme celles provenant de la proposition \ref{prop:formes normales rev} et $v$ une section locale de $\pi^*T_X$. L'action du groupe d'isotropie local se faisant par multiplication par des racines de l'unité, nous en déduisons facilement que $v$ admet une décomposition unique de la forme :
\begin{equation}\label{eq:decomposition section locale}
v=\sum_{I\in \mathcal{I}}w^I\pi^*(v_I)
\end{equation}
où $v_I$ est un champ de vecteurs défini localement sur $X$ et $\mathcal{I}$ est l'ensemble des multi-indices $I=(i_1,\dots,i_n)$ vérifiant les conditions :
$$\left\{\begin{array}{cl}
0\le i_\ell<\max(b_\ell,1) & \mathrm{pour}\,\,\ell=1\dots k,\\
0\le i_{n-\ell+1}<m_\ell & \mathrm{pour}\,\,\ell=1\dots j,\\
i_\ell=0 & \mathrm{sinon.}
\end{array}\right.$$
De ce fait, pour chaque multi-indice $I$, on peut décomposer $v_I$ sous la forme
\begin{equation}\label{eq:decomposition chp vecteur}
v_I=\sum_{j=1}^n g^{(I)}_j(z)\frac{\partial}{\partial z_j}.
\end{equation}
Nous pouvons alors énoncer une caractérisation de $T(\pi,\Delta)$.
\begin{prop}\label{prop:caractérisation local cotangent orbi}
La section locale $v$ vérifie $v\in T(\pi,\Delta)$ si et seulement si $g^{(I)}_j$ est divisible par $z_j$ dès que $i_j<a_j-b_j$ (pour tout $j=1\dots k$).
\end{prop}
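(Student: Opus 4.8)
Le plan est de travailler entièrement dans les coordonnées locales adaptées de la proposition~\ref{prop:formes normales rev} et de traduire aussi bien la condition d'appartenance $v\in T(\pi,\Delta)$ que la décomposition (\ref{eq:decomposition section locale}) et (\ref{eq:decomposition chp vecteur}) en conditions portant sur les développements en série des fonctions coefficients. En écrivant une section locale quelconque de $\pi^*T_X$ dans le repère $(\partial_1,\dots,\partial_n)$ sous la forme $v=\sum_{j=1}^n c_j(w)\,\partial_j$, les générateurs locaux listés dans la définition~\ref{defi:tangent orbi} montrent immédiatement que $v$ appartient à $T(\pi,\Delta)$ si et seulement si $c_j$ est divisible par $w_j^{a_j-b_j}$ pour tout $j=1,\dots,k$ (les coefficients $c_j$ pour $j>k$ étant libres). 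Toute la question se ramène donc à lire la divisibilité en $w_j$ des $c_j$ à partir des données $(g^{(I)}_j)$.

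Je commencerais par substituer (\ref{eq:decomposition chp vecteur}) dans (\ref{eq:decomposition section locale}). Comme $\pi^*(v_I)=\sum_j \pi^*(g^{(I)}_j)\,\partial_j$, le regroupement du coefficient de chaque $\partial_j$ fournit $c_j(w)=\sum_{I\in\mathcal{I}} w^I\,\pi^*(g^{(I)}_j)$. La remarque structurelle essentielle est alors que ceci n'est autre que la décomposition isotypique de $c_j$ sous l'action du groupe d'isotropie local, ce qui est précisément le contenu de l'unicité de la décomposition (\ref{eq:decomposition section locale}) déjà établie : les exposants apparaissant dans $w^I\,\pi^*(g^{(I)}_j)$ se trouvent tous dans la classe $I+L$, où $L$ est le réseau engendré par les $a_\ell e_\ell$ ($\ell\le k$) et les $m_\ell e_{n-\ell+1}$ ($\ell\le j$), et les représentants $I\in\mathcal{I}$ ont été choisis deux à deux non congruents modulo $L$. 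Les supports monomiaux des différents morceaux sont par conséquent deux à deux disjoints, et $c_j$ est divisible par $w_j^{a_j-b_j}$ si et seulement si chaque morceau $w^I\,\pi^*(g^{(I)}_j)$ l'est.

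Il reste alors à analyser un morceau isolé, pour $j\in\{1,\dots,k\}$ et $I\in\mathcal{I}$ fixés. Puisque $g^{(I)}_j$ est une fonction de $z$ et que $z_j=w_j^{a_j}$, la fonction $\pi^*(g^{(I)}_j)$ ne fait intervenir $w_j$ qu'à travers des puissances $w_j^{a_j\beta_j}$ ; l'exposant minimal en $w_j$ figurant dans $w^I\pi^*(g^{(I)}_j)$ vaut donc $i_j+a_j\cdot\mathrm{ord}_{z_j}(g^{(I)}_j)$. Si $z_j\mid g^{(I)}_j$, cet exposant est au moins $i_j+a_j\ge a_j\ge a_j-b_j$, de sorte que le morceau est automatiquement divisible par $w_j^{a_j-b_j}$ ; si $z_j\nmid g^{(I)}_j$, l'exposant minimal vaut exactement $i_j$, et le morceau est divisible par $w_j^{a_j-b_j}$ précisément lorsque $i_j\ge a_j-b_j$. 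Autrement dit, le morceau \emph{n'est pas} divisible exactement lorsque $i_j<a_j-b_j$ et $z_j\nmid g^{(I)}_j$. En combinant ceci avec le critère par morceaux de l'étape précédente, on obtient que $c_j$ est divisible par $w_j^{a_j-b_j}$ pour tout $j=1,\dots,k$ si et seulement si $z_j\mid g^{(I)}_j$ pour tout $I$ tel que $i_j<a_j-b_j$, ce qui est la caractérisation annoncée.

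Le seul point véritablement délicat me semble être le deuxième paragraphe, à savoir la vérification que (\ref{eq:decomposition section locale}) est bien la décomposition en espaces propres pour l'action d'isotropie et que les classes $I+L$ sont disjointes ; une fois cette disjonction des supports acquise, l'énoncé de divisibilité se scinde selon les indices $I$ et le calcul se réduit au décompte élémentaire d'exposants à une seule variable du troisième paragraphe. Un point mineur à vérifier au passage est que les directions $m$-ramifiées $\ell>n-j$ et les directions non ramifiées ne contribuent qu'à la comptabilité des $w^I$ et n'imposent aucune contrainte, en accord avec le fait que l'énoncé ne porte que sur les indices $j=1,\dots,k$.
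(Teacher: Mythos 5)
Votre démonstration est correcte et reprend pour l'essentiel la démarche du texte : traduire l'appartenance à $T(\pi,\Delta)$ par la divisibilité $w_j^{a_j-b_j}\mid c_j$ dans le repère $(\partial_1,\dots,\partial_n)$, puis, grâce à l'unicité de la décomposition (\ref{eq:decomposition section locale}), ramener tout au décompte d'exposants $i_j+a_j\,\mathrm{ord}_{z_j}(g^{(I)}_j)\ge a_j-b_j$. La seule différence est de présentation : le texte se place d'abord en codimension 1 pour se ramener à un calcul à une variable, tandis que vous menez le même calcul simultanément dans toutes les variables en explicitant la disjonction des supports monomiaux modulo le réseau engendré par la ramification, c'est-à-dire l'argument de non-compensation que la démonstration du texte utilise implicitement via les classes de résidus modulo $a$.
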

\begin{proof}
Il suffit de vérifier l'appartenance en codimension 1 et nous pouvons donc supposer que $\pi$ est de la forme $\pi(w_1,\dots,w_n)=(w_1^{a_1},w_2,\dots,w_n)$. Nous nous ramenons donc à un calcul en une variable. Soit donc une décomposition comme en (\ref{eq:decomposition section locale}) 
$$v=\sum_{i=0}^{a-1}w^i\pi^*(v_i)$$
avec l'analogue de (\ref{eq:decomposition chp vecteur}) : $v_i=z^{\alpha_i}f_i\dfrac{\partial}{\partial z}$ (et $\alpha_i$ est exactement l'ordre d'annulation en 0 de $v_i$). En combinant les deux expressions ci-dessus, on en déduit que
$$v=\sum_{i=0}^{a-1}w^{i+a\alpha_i}f_i(w^a)\pi^*\frac{\partial}{\partial z}.$$
Comme l'appartenance à $T(\pi,\Delta)$ se traduit par l'annulation de $v$ à l'ordre $a-b$, nous en déduisons que $v\in T(\pi,\Delta)$ si et seulement si $a\alpha_i+i\ge a-b$ pour tout $i=0\dots a-1$. Ceci se traduit bien par $\alpha_i=1$ si et seulement si $i<a-b$.
\end{proof}

\subsection{$\Delta$-Feuilletages}

Nous allons nous intéresser ici à certains sous-faisceaux du fibré tangent orbifolde que nous appellerons $\Delta$-feuilletages et établir une correspondance entre ceux-ci et les feuilletages sur $X$. Si $\F_X\subset T_X$ est un sous-faisceau du fibré tangent, nous pouvons considérer $\F:=T(\pi,\Delta)\cap \pi^*\F_X$ : c'est un sous-faisceau $G$-stable et saturé de $T(\pi,\Delta)$. Commençons par remarquer que la correspondance est bijective.
\begin{lemm}\label{lem:sous faisceau G-invariant}
Soit $\pi:Y\to X$ un revêtement fini et galoisien de groupe $G$ entre variétés lisses et soit $\mathcal{E}$ un faisceau sans torsion sur $X$. Si $\F\subset \pi^*\mathcal{E}$ est un sous-faisceau saturé et $G$-stable, il existe alors $\F_X$ un sous-faisceau saturé de $\mathcal{E}$ tel que $\F=\pi^*\F_X$.
\end{lemm}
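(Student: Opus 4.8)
Le plan est de descendre $\F$ au point générique par descente galoisienne, puis de le reconstruire globalement sur $X$ par saturation, en exploitant le principe organisateur suivant : un sous-faisceau saturé d'un faisceau sans torsion est entièrement déterminé par sa fibre générique. Plus précisément, si $\mathcal{E}$ est sans torsion sur une variété intègre, l'application $W\mapsto \set{s\in\mathcal{E}\mid s_\eta\in W}$ réalise une bijection entre les sous-$K(X)$-espaces $W$ de la fibre générique $\mathcal{E}_\eta$ et les sous-faisceaux saturés de $\mathcal{E}$. L'injectivité est la seule chose à vérifier : si $\mathcal{G}\subset \mathcal{E}$ est saturé et si $s\in\mathcal{E}$ a sa fibre générique dans $\mathcal{G}_\eta$, alors son image dans $\mathcal{E}/\mathcal{G}$ est de torsion, donc nulle puisque $\mathcal{E}/\mathcal{G}$ est sans torsion ; ainsi $\mathcal{G}=\set{s\in\mathcal{E}\mid s_\eta\in \mathcal{G}_\eta}$.

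Je remarque d'abord que $\pi$, étant fini et surjectif entre variétés lisses de même dimension, est plat (platitude dite miraculeuse), de sorte que $\pi^*$ est exact et que $(\pi^*\mathcal{E})_\eta=\mathcal{E}_\eta\otimes_{K(X)}K(Y)$. L'extension $K(Y)/K(X)$ étant galoisienne de groupe $G$, la fibre générique $\F_\eta\subset(\pi^*\mathcal{E})_\eta$ est un sous-$K(Y)$-espace $G$-stable ; la descente galoisienne pour les espaces vectoriels fournit alors un sous-$K(X)$-espace $W_X:=\F_\eta\cap\mathcal{E}_\eta$ vérifiant $\F_\eta=W_X\otimes_{K(X)}K(Y)$. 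Je pose ensuite $\F_X\subset\mathcal{E}$ le sous-faisceau saturé de fibre générique $W_X$, c'est-à-dire $\F_X=\set{s\in\mathcal{E}\mid s_\eta\in W_X}$.

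Il ne reste plus qu'à établir l'égalité $\pi^*\F_X=\F$. Ce sont deux sous-faisceaux de $\pi^*\mathcal{E}$ ayant même fibre générique $\F_\eta$ ; d'après le principe d'unicité rappelé ci-dessus, il suffit de montrer que $\pi^*\F_X$ est saturé, car $\F$ l'est par hypothèse. Or $\mathcal{E}/\F_X$ est sans torsion et, $\pi$ étant plat, on a $\pi^*\mathcal{E}/\pi^*\F_X=\pi^*(\mathcal{E}/\F_X)$. Le point délicat est précisément la préservation de l'absence de torsion par $\pi^*$ : sur $X$ projective lisse, tout faisceau sans torsion se plonge dans un fibré (via $\mathcal{G}\hookrightarrow\mathcal{G}^{**}$ puis un plongement du réflexif dans un localement libre), et la platitude de $\pi$ transforme ce plongement en un plongement de $\pi^*(\mathcal{E}/\F_X)$ dans un fibré sur $Y$, donc dans un faisceau sans torsion. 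Ainsi $\pi^*\F_X$ est saturé, ce qui conclut. J'attends que l'obstacle principal réside dans cette étape de platitude et de stabilité de l'absence de torsion par image réciproque, le reste (descente générique et unicité de la saturation) étant formel.
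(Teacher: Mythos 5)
Votre démonstration est correcte, mais elle emprunte un chemin réellement différent de celui du texte. La preuve du papier est fonctorielle : elle pose $\F_X:=\pi_*^G\F$, montre que le morphisme naturel $\pi^*(\pi_*^G\mathcal{H})\to \mathcal{H}$ est injectif pour tout faisceau $\mathcal{H}$ sans torsion muni d'une action de $G$ (isomorphisme sur le lieu étale, donc noyau de torsion, lequel est nul car la platitude de $\pi$ propage l'absence de torsion), puis conclut par une chasse au diagramme en invoquant l'exactitude du foncteur $\pi_*^G$ (\cite[lem. A.3]{GKKP}). Vous remplacez ces deux ingrédients par de l'algèbre linéaire au point générique : descente galoisienne classique des sous-espaces $G$-stables de $\E_\eta\otimes_{K(X)}K(Y)$, et correspondance entre sous-faisceaux saturés d'un faisceau sans torsion et sous-espaces de sa fibre générique. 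Les deux arguments reposent \emph{in fine} sur la platitude de $\pi$ et sur la préservation de l'absence de torsion par $\pi^*$, que vous établissez correctement (votre détour par la projectivité est d'ailleurs évitable : localement, un module sans torsion de type fini sur un anneau intègre se plonge dans un module libre après chasse aux dénominateurs). Notez que votre $W_X=\F_\eta\cap\E_\eta=\F_\eta^G$ est exactement la fibre générique du $\pi_*^G\F$ du texte : les deux constructions coïncident donc, mais la vôtre rend l'unicité de $\F_X$ transparente et économise le recours à l'exactitude de $\pi_*^G$, tandis que celle du texte fournit une formule canonique globale qui s'étend telle quelle au cas des variétés normales. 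Un seul point reste implicite chez vous : la cohérence du sous-faisceau $\F_X=\set{s\in\E\mid s_\eta\in W_X}$ ; elle s'obtient en le réalisant comme la saturation dans $\E$ d'un sous-faisceau cohérent engendré par une base de $W_X$ dont on a chassé les dénominateurs — point standard, sans incidence sur la validité de l'ensemble.
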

\begin{proof}
En effet, pour tout faisceau sans torsion $\mathcal{H}$ sur $Y$ muni d'une action de $G$, nous pouvons considérer le faisceau\footnote{Signalons au passage que le foncteur $\pi_*^G$ est exact (voir par exemple \cite[lem. A.3]{GKKP}).} $\pi_*^G\mathcal{H}$ des sections $G$-invariantes de $\mathcal{H}$ et il est aisé de vérifier que le morphisme naturel
$$\pi^*(\pi_*^G\mathcal{H})\To \mathcal{H}$$
est une injection de faisceaux. Sur le lieu où $\pi$ est étale, ce morphisme est un isomorphisme et le noyau est donc un faisceau de torsion. Or, par platitude de $\pi$, le caractère sans torsion de $\mathcal{H}$ se propage à $\pi^*(\pi_*^G\mathcal{H})$ et le morphisme en question est bien injectif. Il suffit alors d'appliquer cette remarque à la situation $\F\subset \pi^*\mathcal{E}$ avec un quotient $\mathcal{Q}=\pi^*\mathcal{E}/\F$ sans-torsion. Nous avons alors (par platitude de $\pi$) :
$$\xymatrix{0\ar[r] & \F\ar[r] & \pi^*\mathcal{E}\ar[r] & \mathcal{Q}\ar[r] & 0\\
0\ar[r] & \pi^*(\pi_*^G\F)\ar[r]\ar@{^{(}->}[u] & \pi^*\mathcal{E}\ar[r]\ar@{=}[u] & \pi^*(\pi_*^G\mathcal{Q}).\ar@{^{(}->}[u] &
}$$
Par exactitude du diagramme ci-dessus, nous pouvons conclure que, dans cette situation, $\F=\pi^*(\F_X)$ avec $\F_X:=\pi_*^G(\F)$. Une fois encore, le fait que $\F_X$ soit saturé dans $\mathcal{E}$ est une conséquence de la platitude de $\pi$.
\end{proof}

Considérons alors $\F\subset T(\pi,\Delta)$ un sous-faisceau $G$-invariant et saturé dans $T(\pi,\Delta)$. Notons $\F^{s}$ son saturé dans $\pi^*T_X$. Comme ce dernier est encore $G$-invariant, nous pouvons lui appliquer le lemme \ref{lem:sous faisceau G-invariant} et en déduire que $\F^s=\pi^*(\F_X)$ pour un certain sous-faisceau $\F_X$ (saturé) de $T_X$. Sur la variété $X$, nous disposons du crochet de Lie qui induit un morphisme $\mathcal{O}_X$-linéaire :
$$\LL_{\F_X}:\bigwedge^2\F_X\stackrel{[\cdot,\cdot]}{\To}T_X/\F_X.$$
Nous pouvons considérer l'image réciproque de cette application par $\pi$ et nous en déduisons un morphisme $\mathcal{O}_Y$-linéaire :
\begin{equation}\label{eq:image réciproque crochet de Lie}
\LL_{\F^s}:=\pi^*\LL_{\F_X}:\bigwedge^2\F^s\To \pi^*T_X/\F^s.
\end{equation}
Le lemme suivant (dont nous reportons la démonstration à la fin du présent paragraphe) est crucial et montre que la restriction de cette application à $\F$ vérifie une propriété similaire à celle du crochet de Lie sur $X$.
\begin{lemm}\label{lem:factorisation crochet de Lie}
La restriction de $\LL_{\F^s}$ à $\F$ a son image dans $T(\pi,\Delta)/\F$ :
$$\xymatrix{\bigwedge^2\F\ar@{^{(}->}[r]\ar[rrd] & \bigwedge^2\F^s\ar[r]^{\LL_{\F^s}} & \pi^*T_X/\F^s\\
&&T(\pi,\Delta)/\F.\ar@{^{(}->}[u]
}$$
\end{lemm}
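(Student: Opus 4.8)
The plan is to verify the factorization $\LL_{\F^s}(\bigwedge^2\F)\subset T(\pi,\Delta)/\F$ purely locally, since all the objects in sight are coherent sheaves and containment of sheaf sections can be checked on a dense open set — here, in codimension one along the ramification divisor, using the explicit local coordinates furnished by Proposition \ref{prop:formes normales rev}. The key reduction is that away from $\ram(\pi)$ the map $\pi$ is étale, so $T(\pi,\Delta)=\pi^*T_X=\F^s$ there and the claim is vacuous; the only content is along the components $\{w_\ell=0\}$ with $\ell\le k$ where $\pi$ genuinely ramifies. I would therefore reduce to the one-variable normal form $\pi(w_1,\dots,w_n)=(w_1^{a},w_2,\dots,w_n)$ exactly as in the proof of Proposition \ref{prop:caractérisation local cotangent orbi}, and use the section-level characterization proved there: a local section $v=\sum_I w^I\pi^*(v_I)$ of $\pi^*T_X$ lies in $T(\pi,\Delta)$ iff the coefficient functions $g^{(I)}_j$ vanish along $\{z_j=0\}$ whenever $i_j<a_j-b_j$.

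First I would take two local sections $u,v\in\F$, lift them through the inclusion $\F\hookrightarrow\F^s=\pi^*\F_X$, and write them in the decomposition \eqref{eq:decomposition section locale}; since $\F^s=\pi^*\F_X$, the component vector fields $u_I,v_I$ are (images by $\pi$ of) local sections of $\F_X$ on $X$. The definition \eqref{eq:image réciproque crochet de Lie} of $\LL_{\F^s}$ as $\pi^*\LL_{\F_X}$ means that, modulo $\F^s$, the value $\LL_{\F^s}(u\wedge v)$ is computed by pulling back the honest Lie bracket $[u_I,v_I]$ taken downstairs in $T_X$. The crucial point is then to track, along each ramified coordinate $z_j$ ($j\le k$), the $z_j$-divisibility of the coefficient functions of $[u_I,v_I]$ and confront it with the membership criterion of Proposition \ref{prop:caractérisation local cotangent orbi}. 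This is where the hypothesis $u,v\in T(\pi,\Delta)$ (not merely $\in\F^s$) enters, constraining the $i_j$ appearing in their expansions and forcing extra vanishing of the relevant $g^{(I)}_j$.

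\textbf{The main obstacle} I expect is precisely this bookkeeping of vanishing orders: one must check that when the Lie bracket is formed, the derivative $\partial/\partial z_j$ hitting a coefficient can lower its order of vanishing along $\{z_j=0\}$ by one, yet the constraints imposed by $u,v\in T(\pi,\Delta)$ are strong enough to absorb this loss so that the bracket still satisfies the divisibility condition defining $T(\pi,\Delta)$. Concretely, if a coefficient of $u$ is forced to be divisible by $z_j$ because the corresponding index satisfies $i_j<a_j-b_j$, differentiating it may produce a unit, but the matching index arithmetic in the bracket (the multi-index of the product shifts, so the threshold $i_j<a_j-b_j$ is tested against a different $i_j$) must conspire to keep the output admissible. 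I would verify this by a direct computation in the one-variable model, writing $v_i=z^{\alpha_i}f_i\,\partial/\partial z$ as in Proposition \ref{prop:caractérisation local cotangent orbi}, forming the bracket of two such expansions, and reading off the resulting vanishing orders against the criterion $a\alpha+i\ge a-b$; the fact that $\F$ is \emph{a priori} Lie-stable in $T_X$ downstairs (so that $[u_I,v_I]\in T_X/\F_X$ already) is what guarantees the output lands in $\pi^*T_X/\F^s$, and the refined vanishing is what upgrades this to $T(\pi,\Delta)/\F$.
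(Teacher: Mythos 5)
Your plan is essentially the paper's own proof: the paper combines Definition \ref{defi:orbi crochet de lie} and Lemma \ref{lem:crochet local induit ce qu'il faut} (the local expression $\sum_{I,J}w^{I+J}\pi^*([u_I,v_J])$ lifts $\LL_{\F^s}$ modulo $\F^s$, which you obtain even more directly from the $\mathcal{O}_Y$-linearity of $\pi^*\LL_{\F_X}$) with Proposition \ref{prop:tangent orbi=feuilletage}, i.e.\ exactly your vanishing-order bookkeeping against the criterion of Proposition \ref{prop:caractérisation local cotangent orbi}, in which the threshold $i_j<a_j-b_j$ is tested at the shifted index $I+J$ so that the constraints on both $u$ and $v$ absorb the loss of one order under $\partial/\partial z_j$. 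Two inessential slips: away from the ramification one has $T(\pi,\Delta)=\pi^*T_X$ and $\F=\F^s$ (not $\F^s=\pi^*T_X$); and your parenthetical appeal to $\F_X$ being \emph{a priori} Lie-stable is neither assumed nor needed --- $\LL_{\F^s}$ lands in $\pi^*T_X/\F^s$ by its very definition, and the involutivity of $\F_X$ is precisely what the present lemma is subsequently used to prove (Proposition \ref{prop:descente feuilletage}), so assuming it would be circular.
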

\begin{rema}\label{rem:inclusion quotients}
Comme nous avons supposé $\F$ saturé dans $T(\pi,\Delta)$, nous avons bien l'identification
$$\F=T(\pi,\Delta)\cap \F^s$$
ainsi que l'inclusion entre les quotients $T(\pi,\Delta)/\F$ et $\pi^*T_X/\F^s$.
\end{rema}
Le lemme \ref{lem:factorisation crochet de Lie} motive la définition suivante.
\begin{defi}\label{defi:Delta-feuilletage}
Un sous-faisceau $\F\subset T(\pi,\Delta)$ $G$-stable et saturé dans $T(\pi,\Delta)$ sera appelé un $\Delta$-feuilletage si l'application induite par le crochet de Lie
$$\bigwedge^2\F\To T(\pi,\Delta)/\F$$
est nulle.
\end{defi}
La correspondance annoncée au début de ce paragraphe est alors résumée dans la proposition suivante.
\begin{prop}\label{prop:descente feuilletage}
Soient $\F\subset T(\pi,\Delta)$ un $\Delta$-feuilletage et $\F^{s}$ son saturé dans $\pi^*T_X$. Il existe alors un sous-faisceau $\F_X\subset T_X$ vérifiant :
\begin{enumerate}
\item $\F^{s}=\pi^*\F_X$ avec $\F_X$ saturé dans $T_X$.
\item $\F_X$ est stable par crochet de Lie : $\F_X$ définit un feuilletage sur $X$.
\item Les fibrés canoniques de $\F$ et $\F_X$ vérifient :
$K_\F:=\det(\F^*)=\pi^*(K_{\F_X}+\Delta^{hor})$.
\end{enumerate}
\end{prop}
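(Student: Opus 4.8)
Le plan est de traiter séparément les trois assertions. La première ne demande essentiellement aucun travail : le saturé $\F^s$ d'un sous-faisceau $G$-stable reste $G$-stable (la saturation est canonique, donc $G$-équivariante), de sorte que le lemme \ref{lem:sous faisceau G-invariant} appliqué à $\mathcal{E}=T_X$ fournit directement $\F^s=\pi^*\F_X$ avec $\F_X=\pi_*^G\F^s$ saturé dans $T_X$. Les deux points restants sont respectivement de nature formelle et de nature calculatoire.

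Pour le point 2, je partirais du morphisme $\LL_{\F^s}=\pi^*\LL_{\F_X}$ introduit en (\ref{eq:image réciproque crochet de Lie}). D'après le lemme \ref{lem:factorisation crochet de Lie}, sa restriction à $\bigwedge^2\F$ se factorise par $T(\pi,\Delta)/\F$, et cette factorisation est nulle puisque $\F$ est un $\Delta$-feuilletage (définition \ref{defi:Delta-feuilletage}) ; ainsi $\LL_{\F^s}$ s'annule sur le sous-faisceau $\bigwedge^2\F\subset\bigwedge^2\F^s$. Or $\F=T(\pi,\Delta)\cap\F^s$ (remarque \ref{rem:inclusion quotients}) coïncide avec $\F^s$ sur l'ouvert dense, de complémentaire divisoriel, où $T(\pi,\Delta)=\pi^*T_X$ ; l'inclusion $\bigwedge^2\F\hookrightarrow\bigwedge^2\F^s$ est donc un isomorphisme générique. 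Comme le faisceau d'arrivée $\pi^*T_X/\F^s$ est sans torsion ($\F^s$ étant saturé), un morphisme nul sur un ouvert dense est identiquement nul : on obtient $\LL_{\F^s}=0$, d'où $\pi^*\LL_{\F_X}=0$ puis $\LL_{\F_X}=0$ par fidèle platitude de $\pi$. Le faisceau $\F_X$ est donc stable par crochet de Lie.

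Le cœur de la preuve, et sa principale difficulté, est le point 3, que j'établirais en comparant $\det\F$ à $\det\F^s=\pi^*\det\F_X$. Les deux faisceaux étant saturés, leurs déterminants sont bien définis et, par réflexivité, il suffit de vérifier l'identité de classes de diviseurs au point général de chaque diviseur premier où $T(\pi,\Delta)$ diffère de $\pi^*T_X$, c'est-à-dire au-dessus des composantes $\Delta_i$ (ailleurs $\F=\F^s$). En un tel point $\F_X$ est un sous-fibré et, suivant la définition \ref{def:diviseur invariant}, ou bien $\F_X$ est tangent à $\Delta_i$ (composante invariante), ou bien transverse (composante horizontale). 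Dans des coordonnées adaptées (proposition \ref{prop:formes normales rev}) avec $z_i=w_i^{a_i}$, j'utiliserais les générateurs locaux de $T(\pi,\Delta)$ (définition \ref{defi:tangent orbi}) :
\begin{itemize}
\item si $\Delta_i$ est invariant, tout $v\in\F_X$ a sa composante selon $\partial/\partial z_i$ divisible par $z_i$, donc $\pi^*v$ a sa composante selon $\partial_i$ divisible par $w_i^{a_i}$, a fortiori par $w_i^{a_i-b_i}$ ; on a donc $\pi^*\F_X\subset T(\pi,\Delta)$ localement, soit $\F=\F^s$ et aucune contribution au déterminant ;
\item si $\Delta_i$ est horizontal, on peut choisir une base locale $e_1,\dots,e_r$ de $\F_X$ avec $e_1(z_i)$ inversible et $e_2,\dots,e_r$ de composante selon $\partial/\partial z_i$ nulle ; l'appartenance à $T(\pi,\Delta)$ force alors le coefficient de $\pi^*e_1$ à s'annuler à l'ordre $a_i-b_i$, d'où $\F=\langle w_i^{a_i-b_i}\pi^*e_1,\pi^*e_2,\dots,\pi^*e_r\rangle$ et $\det\F=\det\F^s-(a_i-b_i)\{w_i=0\}$.
\end{itemize}
Comme $\pi^*\big((1-\tfrac{b_i}{a_i})\Delta_i\big)=(a_i-b_i)\{w_i=0\}$, la somme sur les composantes horizontales reconstitue exactement $\pi^*\Delta^{hor}$, ce qui donne $K_\F=-\det\F=-\pi^*\det\F_X+\pi^*\Delta^{hor}=\pi^*(K_{\F_X}+\Delta^{hor})$. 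L'obstacle réel se concentre donc sur ce dernier calcul : il faut justifier proprement la réduction à la codimension $1$, s'assurer que la dichotomie invariant/horizontal se lit bien sur le choix du générateur $e_1$, et contrôler que les multiplicités $a_i-b_i$ produites par la définition de $T(\pi,\Delta)$ coïncident avec les coefficients de $\pi^*\Delta^{hor}$.
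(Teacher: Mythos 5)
Your proof is correct. Parts 1 and 2 coincide with the paper's argument (Lemma \ref{lem:sous faisceau G-invariant}, then vanishing of $\LL_{\F^s}$ on $\bigwedge^2\F$ via Lemma \ref{lem:factorisation crochet de Lie} together with torsion-freeness of $\pi^*T_X/\F^s$; the explicit descent step $\pi^*\LL_{\F_X}=0\Rightarrow\LL_{\F_X}=0$ by flatness is a detail the paper leaves implicit). For part 3, however, you take a genuinely different, dual route. The paper works on the \emph{form} side: $\F_X$ is encoded by $\omega\in H^0(X,\Omega^q_X\otimes\det(N_{\F_X}))$ with zero locus of codimension at least two, and $K_\F$ is computed as $\det(\Omega^1(\pi,\Delta))+\pi^*\det(N_{\F_X})-Z(\omega_\F)$, where $Z(\omega_\F)$ is the divisorial vanishing of $\pi^*\omega$ viewed as a section of $\Omega^q(\pi,\Delta)\otimes\pi^*\det(N_{\F_X})$; the local computation shows $Z(\omega_\F)$ is supported over the \emph{invariant} components with multiplicity $a_i-b_i$, and since $\det(\Omega^1(\pi,\Delta))=\pi^*(K_X+\Delta)$ contains all of $\pi^*\Delta$, subtracting $\pi^*\Delta^{inv}$ leaves $\pi^*(K_{\F_X}+\Delta^{hor})$. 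You instead work on the \emph{tangent} side, computing the torsion of $\F^s/\F$ at the general point of each $D^{(i)}_j$: no contribution over invariant components (where indeed $\pi^*\F_X\subset T(\pi,\Delta)$ locally), and a correction $(a_i-b_i)D^{(i)}_j$ over horizontal ones --- the invariant/horizontal dichotomy is exactly dual to the paper's, yet both bookkeepings yield $\pi^*\Delta^{hor}$, which is a useful consistency check. The paper's version buys a one-stroke determinant computation (the zero divisor of a single section of a line bundle, which also disposes of the extra ramification of $\pi$ automatically), at the cost of invoking the $\omega$-description of foliations; your frame computation is more elementary and makes the rank-one loss visible, at the cost of the normalizations you rightly flag --- all of which are standard and fillable: $\F$ and $\F^s$ are reflexive and agree outside $\pi^{-1}(\mathrm{Supp}(\Delta))$, so $\det\F^s-\det\F$ is the divisor of the torsion sheaf $\F^s/\F$, computable at general points of prime divisors; at a general point of $\Delta_i$ both $\F_X$ and $T_X/\F_X$ are locally free (torsion-free sheaves are locally free in codimension one), so your frame $e_1,\dots,e_r$ with $e_j(z_i)=0$ for $j\ge2$ exists; a non-invariant prime divisor is generically transverse, since tangency of $\F_X$ along all of $\Delta_i$ would force $\Delta_i$ to be invariant; and the multiplicity check $\pi^*\bigl((1-\tfrac{b_i}{a_i})\Delta_i\bigr)=\sum_j(a_i-b_i)D^{(i)}_j$ is exactly as you state, including the coefficient-one components ($a_i=1$, $b_i=0$, no ramification, multiplicity one).
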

\begin{proof}
Le premier point n'étant autre que la conclusion du lemme \ref{lem:sous faisceau G-invariant}, concentrons nous sur les points restants. Le lemme \ref{lem:factorisation crochet de Lie} montre que l'application induite par le crochet de Lie
$$\LL_{\F^s}:\bigwedge^2\F^{s}\To\pi^*T_X/\F^{s}$$
est génériquement nulle puisque nulle sur $\bigwedge^2\F$. Le quotient $\pi^*T_X/\F^{s}$ étant sans torsion, l'application ci-dessus est donc identiquement nulle et cela signifie exactement que $\F_X$ est stable par crochet de Lie.

Il nous reste à relier les fibrés canoniques de $\F_X$ et de $\F$. Pour cela, nous utilisons le point de vue des formes comme rappelé dans le paragraphe \ref{sub:rappel feuilletage} : le feuilletage $\F_X$ est donné par une section
$$\omega\in H^0(X,\Omega^q_X\otimes \det(N_{\F_X}))$$
dont le lieu des zéros est de codimension au moins deux dans $X$. Le fibré canonique de $\F=\pi^*(\F_X)\cap T(\pi,\Delta)$ sera alors linéairement équivalent à
\begin{equation}\label{eq:canonique feuilletage orbi}
\det(\Omega^1(\pi,\Delta))+\pi^*\det(N_{\F_X})-Z(\omega_\F)
\end{equation}
où $Z(\omega_\F)$ est la partie divisorielle des zéros de $\omega_\F:=\pi^*(\omega)$ \emph{vue comme une section} de $\Omega^q(\pi,\Delta)\otimes \pi^*\det(N_{\F_X})$. Une fois cette observation faite, le calcul est aisé : 
\begin{enumerate}
\item au-dessus d'une composante transverse à $\F_X$, la forme $\omega_\F$ ne s'annule pas en codimension 1.
\item au-dessus d'un point général du lieu de branchement de $\pi$ qui n'est pas sur $\Delta$, la forme ne s'annule pas non plus (car $\Omega^q(\pi,\Delta)$ prend en compte la ramification additionnelle).
\item si $\Delta_1=\set{z_1=0}$ est une composante invariante par $\F_X$, la forme $\omega$ s'écrit alors
$$\omega=\dd z_1\wedge \dd z_2\wedge\dots\wedge \dd z_q$$
(et $\pi$ ne ramifie pas au-dessus de $z_j$, $j=2\dots q$). Nous avons alors :
$$\omega_\F=w_1^{a_1-1}\dd w_1\wedge \dd w_2\wedge\dots\wedge \dd w_q=w_1^{a_1-b_1}\left(w_1^{b_1-1}\dd w_1\wedge \dd w_2\wedge\dots\wedge \dd w_q\right)$$
où l'élément entre parenthèses est une section de $\Omega^q(\pi,\Delta)$.
\end{enumerate}
Au vu de la formule (\ref{eq:canonique feuilletage orbi}), le canonique de $\F$ est donné par :
\begin{align*}
K_\F&=\sum_{\Delta_i\subset \Delta^{inv}}\pi^*((\frac{b_i}{a_i}-1)\Delta_i)+\det(\Omega^1(\pi,\Delta))+\pi^*\det(N_{\F_X})\\
&=\pi^*(-\Delta^{inv}+K_X+\Delta+\det(N_{\F_X}))=\pi^*(K_{\F_X}+\Delta^{hor}).
\end{align*}
\end{proof}
\noindent Nous remercions Stéphane Druel de nous avoir communiqué cette dernière observation.

Avant de clore ce paragraphe, nous devons encore démontrer le lemme \ref{lem:factorisation crochet de Lie}. La subtilité vient de ce que l'on peut définir \emph{localement} un relèvement du crochet de Lie à $\pi^*T_X$ tout entier mais il faut bien prendre garde au fait que cette opération ne se recolle absolument pas puisqu'elle dépend d'un choix de base (locale) de $\mathcal{O}_Y$ sur $\mathcal{O}_X$. Nous nous plaçons donc dans des coordonnées adaptées à $\pi$ (données par la proposition \ref{prop:formes normales rev}) et définies sur un ouvert\footnote{Pour la topologie usuelle.} $U\subset Y$.
\begin{defi}\label{defi:orbi crochet de lie}
Soient $u$ et $v$ deux sections locales de $(\pi^*T_X)_{\mid U}$ et écrivons leur décomposition comme en (\ref{eq:decomposition section locale}) ci-dessus :
$$u=\sum_{I\in \mathcal{I}}w^I\pi^*(u_I)\quad\textrm{et}\quad v=\sum_{I\in \mathcal{I}}w^I\pi^*(v_I).$$
Nous posons alors :
$$\mathcal{L}^{loc}_\pi(u,v)=\sum_{I,J\in \mathcal{I}}w^{I+J}\pi^*\left([u_I,v_J]\right).$$
\end{defi}
\noindent Malgré la lourdeur des notations, nous insistons sur la restriction des faisceaux considérés à l'ouvert $U$ pour bien garder en mémoire que cette opération a un caractère éminemment local.
\begin{lemm}\label{lem:crochet local induit ce qu'il faut}
L'application $\LL^{loc}_\pi$ est un relèvement \emph{local} $\CC$-linéaire
$$\LL_\pi^{loc}:(\pi^*T_X)_{\mid U}\times (\pi^*T_X)_{\mid U}\To (\pi^*T_X)_{\mid U}$$
du morphisme $\mathcal{O}_Y$-linéaire $\LL_{\F^s}$ défini en (\ref{eq:image réciproque crochet de Lie}) pour tout faisceau $\F^s=\pi^*(\F_X)$.
\end{lemm}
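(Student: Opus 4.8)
Le plan est de vérifier séparément les deux assertions contenues dans l'énoncé : la $\CC$-bilinéarité de $\LL^{loc}_\pi$ et le fait qu'elle relève $\LL_{\F^s}$ après passage au quotient. La $\CC$-linéarité ne présente aucune difficulté : la décomposition (\ref{eq:decomposition section locale}) $v\mapsto (v_I)_{I\in\mathcal{I}}$ est unique (elle est dictée par l'action du groupe d'isotropie par racines de l'unité), et comme le crochet de Lie $[\cdot,\cdot]$ des champs de vecteurs sur $X$ est $\CC$-bilinéaire, l'expression $\sum_{I,J}w^{I+J}\pi^*([u_I,v_J])$ est une somme finie de sections de $(\pi^*T_X)_{\mid U}$ dépendant $\CC$-bilinéairement de $(u,v)$. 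Notons au passage que l'antisymétrie du crochet entraîne celle de $\LL^{loc}_\pi$, qui se factorise donc par $\bigwedge^2$.

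Pour la propriété de relèvement, je partirais de deux sections $u,v$ de $\F^s=\pi^*\F_X$. L'observation clé est que les applications de projection $v\mapsto v_I$ sur les composantes de (\ref{eq:decomposition section locale}) sont $\mathcal{O}_X$-linéaires (\emph{via} $\pi^*$) : en effet, la multiplication par $\pi^*(g)$, avec $g\in\mathcal{O}_X$, étant invariante sous l'isotropie, elle préserve chaque composante isotypique, d'où $(\pi^*(g)v)_I=g\,v_I$ par unicité. Ces projections envoient donc les sections de $\pi^*\F_X$ dans $\F_X$, et les composantes $u_I,v_J$ sont toutes des sections de $\F_X$.

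Il reste alors à comparer deux expressions. D'un côté, $u\wedge v=\sum_{I,J}w^{I+J}\,\pi^*(u_I\wedge v_J)$, de sorte qu'en appliquant le morphisme $\mathcal{O}_Y$-linéaire $\LL_{\F^s}=\pi^*\LL_{\F_X}$ on obtient $\LL_{\F^s}(u\wedge v)=\sum_{I,J}w^{I+J}\,\pi^*\big(\LL_{\F_X}(u_I\wedge v_J)\big)$. De l'autre, l'image de $\LL^{loc}_\pi(u,v)$ dans $\pi^*T_X/\F^s$ vaut $\sum_{I,J}w^{I+J}\,\pi^*\big(\overline{[u_I,v_J]}\big)$, où $\overline{[u_I,v_J]}$ désigne la classe de $[u_I,v_J]$ modulo $\F_X$, c'est-à-dire précisément $\LL_{\F_X}(u_I\wedge v_J)$. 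L'ingrédient qui fait fonctionner cette identification est l'isomorphisme $\pi^*T_X/\F^s\simeq\pi^*(T_X/\F_X)$ issu de la platitude de $\pi$ (exactitude à droite de $\pi^*$ appliquée à $0\to\F_X\to T_X\to T_X/\F_X\to 0$), déjà exploitée dans la démonstration du lemme \ref{lem:sous faisceau G-invariant}. Les deux sommes coïncidant terme à terme, on conclut que $\LL^{loc}_\pi$ relève bien $\LL_{\F^s}$.

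Le point délicat n'est pas tant le calcul que la mise en garde déjà formulée avant l'énoncé : l'application $\LL^{loc}_\pi$ n'est que $\CC$-linéaire et dépend du choix des coordonnées adaptées, de sorte qu'elle ne se recolle pas en un morphisme global. Ce qui se recolle, en revanche, c'est sa \emph{projection} sur le quotient $\pi^*T_X/\F^s$, qui redonne le morphisme $\mathcal{O}_Y$-linéaire global $\LL_{\F^s}$. C'est précisément cette dissociation — un relèvement purement local dont la réduction modulo $\F^s$ est globale — qui devra être maniée avec soin, car c'est elle qui sera mise à profit pour établir le lemme \ref{lem:factorisation crochet de Lie} en restreignant $u$ et $v$ au sous-faisceau $\F\subset T(\pi,\Delta)$.
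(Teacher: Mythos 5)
Your proof is correct, but it verifies the lifting property by a genuinely different mechanism than the paper. The paper reduces to the one-variable normal form $\pi(w_1,\dots,w_n)=(w_1^{a},w_2,\dots,w_n)$ and checks that $\LL^{loc}_\pi$ becomes $\mathcal{O}_Y$-bilinear after reduction modulo $\F^s$: writing $k=aq+r$, the canonical decomposition of $w_1^k\pi^*(u_X)$ is $w_1^r\pi^*(z_1^qu_X)$, and the Leibniz rule produces the correction term $w_1^r\pi^*(\varphi u_X)$ with $\varphi=v_X(z_1^q)$, which lies in $\F^s$ precisely because $u_X\in\F_X$; combined with the tautological agreement with $\pi^*\LL_{\F_X}$ on pulled-back sections, this yields the lemma since sections of $\F^s$ are locally $\mathcal{O}_Y$-combinations of pullbacks. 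You never rewrite the monomials $w^{I+J}$ at all: you establish that the component projections $v\mapsto v_I$ are $\mathcal{O}_X$-linéaires, hence send sections of $\pi^*\F_X$ into $\F_X$ (strictly, this also uses that $\mathcal{O}_Y$ decomposes locally with basis the $w^I$, so that $(w^J\pi^*(fs))_I$ is computed by uniqueness --- a routine completion of your isotypic argument), and then you match the two sides term by term through the identification $\pi^*T_X/\F^s\simeq\pi^*(T_X/\F_X)$ furnished by the flatness of $\pi$, using only the $\mathcal{O}_Y$-linearity of $\LL_{\F^s}$ to absorb the coefficients $w^{I+J}$. This bypasses the Euclidean division and the explicit Leibniz correction entirely, and it delivers exactly the statement needed downstream for the lemme \ref{lem:factorisation crochet de Lie}; note that the hypothesis that $u,v$ are sections of $\F^s$, which in the paper enters via the correction term $\varphi u_X\in\F_X$, enters in your argument through the claim $u_I,v_J\in\F_X$. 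What the paper's computation buys in exchange is the slightly stronger intermediate fact that $\LL^{loc}_\pi$ is $\mathcal{O}_Y$-bilinéaire modulo $\F^s$ (not merely $\CC$-bilinéaire), making explicit why the coordinate-dependent lift, which does not glue, nevertheless induces a well-defined global morphism after passage to the quotient --- a point you correctly identify but obtain instead from the uniqueness of the decomposition (\ref{eq:decomposition section locale}).
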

\begin{proof}
Là encore, en raisonnant coordonnées par coordonnées, on peut se limiter au cas de $\pi(w_1,\dots,w_n)=(w_1^{a},w_2,\dots,w_n)$ et il suffit de vérifier que
$$\LL^{loc}_\pi(w_1^k\pi^*(u_X),\pi^*(v_X))=w_1^k\LL^{loc}_\pi(\pi^*(u_X),\pi^*(v_X))\quad\mathrm{mod}\,\pi^*\F_X$$
pour tout $k\ge0$. Pour cela, effectuons la division euclidienne de $k$ par $a$ et écrivons $k=aq+r$. Il vient alors :
\begin{align*}
\LL^{loc}_\pi(w_1^k\pi^*(u_X),\pi^*(v_X))&=\LL^{loc}_\pi(w_1^{aq+r}\pi^*(u_X),\pi^*(v_X))=\LL^{loc}_\pi(w_1^r\pi^*(z_1^qu_X),\pi^*(v_X))\\
&=w_1^r\pi^*[z_1^qu_X,v_X]=w_1^r\pi^*\left(z_1^q[u_X,v_X]+\varphi u_X \right)\\
&=w_1^{r+aq}\LL^{loc}_\pi(\pi^*(u_X),\pi^*(v_X))+w_1^r\pi^*(\varphi u_X)
\end{align*}
avec $\varphi=v_X(z_1^q)$ une fonction sur $X$. Ceci montre bien le résultat.
\end{proof}

Il nous reste à montrer que le fibré tangent orbifolde est fermé pour cette opération.
\begin{prop}\label{prop:tangent orbi=feuilletage}
Le fibré $T(\pi,\Delta)_{\mid U}\subset(\pi^*T_X)_{\mid U}$ est stable par $\LL^{loc}_\pi$ :
$$\LL^{loc}_\pi\left(T(\pi,\Delta)_{\mid U}\times T(\pi,\Delta)_{\mid U}\right)\subset T(\pi,\Delta)_{\mid U}.$$
\end{prop}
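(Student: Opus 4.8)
The plan is to test membership in $T(\pi,\Delta)$ through the explicit criterion of Proposition \ref{prop:caractérisation local cotangent orbi}. That criterion is imposed coordinate by coordinate (for $j=1,\dots,k$) and, exactly as in the proof of \emph{loc. cit.}, the membership of a section of $\pi^*T_X$ may be checked in codimension $1$. I would therefore localise near a general point of a component $\Delta_j$, where neither the other components of $\Delta$ nor the extra ramification locus intervene, and reduce to the one-variable normal form $\pi(w_1,\dots,w_n)=(w_1^{a},w_2,\dots,w_n)$ with $a=a_1$, $b=b_1$ (and $j=1$). This is the very normalisation already used for the previous lemmas of this section, so its legitimacy is inherited from there; note that $\LL^{loc}_\pi$ is only $\CC$-bilinear, which is precisely why one must track the monomials $w_1^{i+p}$ rather than invoke $\mathcal{O}_Y$-linearity.

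In this normal form, write the unique decompositions
$$u=\sum_{i=0}^{a-1}w_1^i\pi^*(u_i),\qquad v=\sum_{p=0}^{a-1}w_1^p\pi^*(v_p),$$
and set $c:=a-b$. By Proposition \ref{prop:caractérisation local cotangent orbi} the hypotheses $u,v\in T(\pi,\Delta)$ mean that the $\partial/\partial z_1$-component of $u_i$ (resp. of $v_p$) is divisible by $z_1$ as soon as $i<c$ (resp. $p<c$). From Definition \ref{defi:orbi crochet de lie} one has
$$\LL^{loc}_\pi(u,v)=\sum_{i,p}w_1^{i+p}\,\pi^*\bigl([u_i,v_p]\bigr),$$
which I would bring back to normal form using $w_1^{a}=\pi^*(z_1)$, namely $w_1^{i+p}=w_1^{s}\pi^*(z_1^{q})$ with $s:=(i+p)\bmod a$ and $q:=\lfloor(i+p)/a\rfloor$. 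What remains is then to show that, for every reduced exponent $s<c$, the $\partial/\partial z_1$-component of $z_1^{q}[u_i,v_p]$ is divisible by $z_1$; divisibility being additive, it suffices to treat each pair $(i,p)$ separately.

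The verification splits into two cases. If $q\ge1$, the prefactor $z_1^{q}$ already supplies the divisibility. If $q=0$, then $i+p=s<c$, which forces both $i<c$ and $p<c$; hence both $\partial/\partial z_1$-components of $u_i$ and $v_p$ vanish along $\{z_1=0\}$. Writing $u_i=\sum_m f_m\partial/\partial z_m$ and $v_p=\sum_m g_m\partial/\partial z_m$ with $z_1\mid f_1$ and $z_1\mid g_1$, the $\partial/\partial z_1$-component of $[u_i,v_p]$ equals $\sum_m\bigl(f_m\,\partial g_1/\partial z_m-g_m\,\partial f_1/\partial z_m\bigr)$, and every summand is divisible by $z_1$: for $m=1$ the divisibility comes from the factor $f_1$, resp. $g_1$, while for $m\neq1$ it comes from $\partial g_1/\partial z_m$, resp. $\partial f_1/\partial z_m$, since $\partial/\partial z_m$ with $m\neq1$ preserves divisibility by $z_1$. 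Summing over the relevant $(i,p)$ yields the claim.

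I expect the only real difficulty to be bookkeeping rather than conceptual: one must keep track of the auxiliary factor $z_1^{q}$ produced by the reduction $i+p\mapsto s$, and notice that the regime $q=0$ is precisely the one in which the hypotheses on $u$ and $v$ become available, because $s<c$ then forces $i,p<c$. Once this is isolated, the heart of the matter is the elementary observation that the Lie bracket of two vector fields whose $z_1$-component vanishes on $\{z_1=0\}$ again has its $z_1$-component vanishing there; this is the infinitesimal manifestation of the compatibility of $T(\pi,\Delta)$ with the orbifold structure, and it is what makes the criterion of Proposition \ref{prop:caractérisation local cotangent orbi} stable under $\LL^{loc}_\pi$.
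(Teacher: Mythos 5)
Your proof is correct and follows essentially the same route as the paper: both rest on the characterisation of Proposition \ref{prop:caractérisation local cotangent orbi} and on an explicit computation of the brackets of the pieces $\pi^*(u_I)$, $\pi^*(v_J)$ showing that the divisibility conditions along $\Delta$ are inherited. The differences are presentational: you first reduce to codimension $1$ and to the one-variable normal form (as the paper itself does for Proposition \ref{prop:caractérisation local cotangent orbi} and Lemme \ref{lem:crochet local induit ce qu'il faut}, though not for this proposition), and your dichotomy $q\ge 1$ versus $q=0$ after writing $w_1^{i+p}=w_1^{s}\pi^*(z_1^{q})$ makes explicit a bookkeeping step — the reduction of the exponents $I+J$ into the canonical range — that the paper's proof, working with the full multi-index decomposition and declaring the conclusion \og évident\fg, leaves tacit.
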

\begin{proof}
Nous allons exploiter la caractérisation obtenue à la proposition \ref{prop:caractérisation local cotangent orbi}. Soit donc $u$ et $v$ deux sections locales de $T(\pi,\Delta)$ que nous écrivons
$$u=\sum_{I\in \mathcal{I}}w^I\pi^*(u_I)\quad\textrm{et}\quad v=\sum_{J\in \mathcal{I}}w^J\pi^*(v_J).$$
D'après la proposition \ref{prop:caractérisation local cotangent orbi}, les champs de vecteurs $u_I$ et $v_J$ s'écrivent
$$u_I=\sum_{j=1}^n z_j^{\alpha_{I,j}}f^{(I)}_j\frac{\partial}{\partial z_j}\quad\textrm{et}\quad v_J=\sum_{j=1}^n z_j^{\beta_{J,j}}g^{(J)}_j\frac{\partial}{\partial z_j}$$
avec $\alpha_{I,j}=1$ dès que $I(j)<a_j-b_j$ (resp. $\beta_{J,j}=1$ dès que $J(j)<a_j-b_j$). Or, l'expression du crochet de Lie de $u$ et $v$ s'écrit :
\begin{align*}
\LL_\pi^{loc}(u, v)&=\sum_{I,J\in \mathcal{I}}w^{I+J}\pi^*([u_I,v_J])\\
&=\sum_{I,J\in \mathcal{I}}w^{I+J}\pi^*\left(\sum_{j,k=1}^n[z_j^{\alpha_{I,j}}f^{(I)}_j\frac{\partial}{\partial z_j},z_k^{\beta_{J,k}}g^{(J)}_k\frac{\partial}{\partial z_k}]\right)
\end{align*}
et il nous faut donc vérifier que le crochet de Lie vérifie les bonnes conditions d'annulation. Mais c'est évident puisque
$$[z_j^{\alpha_{I,j}}f^{(I)}_j\frac{\partial}{\partial z_j},z_k^{\beta_{J,k}}g^{(J)}_k\frac{\partial}{\partial z_k}]=\left\{\begin{array}{cc}
z_j^{\alpha_{I,j}}z_k^{\beta_{J,k}}\frac{\partial f^{(I)}_j}{\partial z_k}g^{(J)}_k\frac{\partial}{\partial z_j} - z_j^{\alpha_{I,j}}z_k^{\beta_{J,k}}\frac{\partial g^{(J)}_k}{\partial z_j}f^{(I)}_j\frac{\partial}{\partial z_k} & \textrm{si }j\neq k\\
&\\
z_j^{\alpha_{I,j}+\beta_{J,j}}(\frac{\partial f^{(I)}_j}{\partial z_j}g^{(J)}_j-\frac{\partial g^{(J)}_j}{\partial z_j}f^{(I)}_j)\frac{\partial}{\partial z_j}&\textrm{sinon.}
\end{array}\right.
$$
\end{proof}

\begin{proof}[Démonstration du lemme \ref{lem:factorisation crochet de Lie}]
Il s'agit de la combinaison du lemme \ref{lem:crochet local induit ce qu'il faut} et de la proposition \ref{prop:tangent orbi=feuilletage}.
\end{proof}

\subsection{Démonstration du théorème principal et conséquences}

\noindent Nous avons maintenant effectué tous les préparatifs nécessaires à la
\begin{proof}[Démonstration du théorème \ref{th:positivité orbifolde}]
Raisonnons par l'absurde et supposons que $\Omega^1(\pi,\Delta)^{\otimes m}$ admette un quotient dont la pente est strictement négative par rapport à une classe mobile $\pi^*\alpha$. Par dualité, cela signifie $\mu_{\pi^*\alpha}^{max}(T(\pi,\Delta)^{\otimes m})>0$ et le théorème \ref{th:mumax produit tensoriel} montre qu'il en est de même pour $T(\pi,\Delta)$. Considérons alors le déstabilisant maximal $\F\subset T(\pi,\Delta)$ pour cette classe $\pi^*\alpha$. Par unicité, le faisceau $\F$ est donc saturé dans $T(\pi,\Delta)$ et $G$-stable. De plus, toujours pour des raisons de pentes par rapport à $\pi^*\alpha$, tout morphisme
$$\bigwedge^2\F\To T(\pi,\Delta)/\F$$
doit être nul. Nous pouvons donc appliquer la proposition \ref{prop:descente feuilletage} : le saturé $\F^s$ de $\F$ dans $\pi^*T_X$ s'écrit donc comme l'image réciproque $\F^s=\pi^*(\F_X)$ avec $\F_X$ un feuilletage sur $X$. D'autre part, en utilisant la remarque \ref{rem:pente saturation}, nous obtenons la suite d'inégalité des pentes
$$\mu^{min}_\alpha(\F_X)\ge\mu^{min}_{\pi^*\alpha}(\F^s)\ge\mu^{min}_{\pi^*\alpha}(\F)>0,$$
ce qui montre que le théorème \ref{th:critère mobile} s'applique : $\F_X$ est algébriquement intégrable. Comme nous avons supposé que $K_X+\Delta$ est pseudo-effectif, le théorème \ref{th:positivité feuilletages} nous permet d'affirmer que $K_{\F_X}+\Delta^{hor}$ l'est également. Nous aboutissons ainsi à une contradiction puisque, à nouveau d'après la proposition \ref{prop:descente feuilletage},
$$0>\pi^*\alpha\cdot K_\F=\pi^*\alpha\cdot\pi^*(K_{\F_X}+\Delta^{hor})=\deg(\pi)\alpha\cdot (K_{\F_X}+\Delta^{hor})\ge 0.$$
\end{proof}

Comme annoncé plus haut dans ce texte, le théorème \ref{th:positivité orbifolde} permet de montrer que la conclusion du théorème \ref{th:positivité feuilletages} reste vraie sans l'hypothèse d'intégrabilité algébrique.
\begin{coro}\label{cor:feuilletage tordu}
Si $(X,\Delta)$ est une orbifolde avec $K_X+\Delta$ pseudo-effectif, alors le fibré canonique tordu $K_\F+\Delta^{hor}$ est pseudo-effectif pour tout feuilletage $\F$ sur $X$.
\end{coro}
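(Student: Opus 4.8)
Le plan est de déduire ce corollaire du théorème \ref{th:positivité orbifolde} et de la correspondance entre feuilletages sur $X$ et $\Delta$-feuilletages sur un revêtement adapté (proposition \ref{prop:descente feuilletage}). L'idée directrice est que la pseudo-effectivité de $K_\F+\Delta^{hor}$ sur $X$ se traduit, après passage à un revêtement $\Delta$-adapté, en une simple inégalité de pente pour un sous-faisceau du fibré tangent orbifolde, inégalité qui n'est autre que le contenu du théorème principal : la positivité du cotangent orbifolde ayant déjà été établie, il ne reste qu'à l'interpréter correctement.

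Concrètement, je commencerais par fixer un revêtement $\Delta$-adapté $\pi:Y\to X$ (proposition \ref{prop:exisence Kawa cover}), de groupe $G$, et par former le sous-faisceau $\F_Y:=T(\pi,\Delta)\cap\pi^*\F$. Comme rappelé avant le lemme \ref{lem:sous faisceau G-invariant}, ce faisceau est $G$-stable et saturé dans $T(\pi,\Delta)$. Puisque $\F$ est stable par crochet de Lie sur $X$, l'application $\LL_{\F^s}=\pi^*\LL_{\F}$ est identiquement nulle, et le lemme \ref{lem:factorisation crochet de Lie} assure que l'application induite $\bigwedge^2\F_Y\to T(\pi,\Delta)/\F_Y$ par le crochet de Lie est donc nulle : $\F_Y$ est un $\Delta$-feuilletage au sens de la définition \ref{defi:Delta-feuilletage}. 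La proposition \ref{prop:descente feuilletage} s'applique alors (son feuilletage $\F_X$ étant ici notre $\F$) et fournit la relation clef
$$K_{\F_Y}=\pi^*(K_\F+\Delta^{hor}).$$

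Il reste à établir la positivité de $K_{\F_Y}$, et c'est ici qu'intervient le théorème \ref{th:positivité orbifolde}. Appliqué avec $m=1$, il affirme que tout quotient de $\Omega^1(\pi,\Delta)$ a une pente positive par rapport à $\pi^*\alpha$, autrement dit $\mu^{min}_{\pi^*\alpha}(\Omega^1(\pi,\Delta))\ge0$ ; par dualité (remarque \ref{rem:dualité mumin-mumax}), cela se réécrit $\mu^{max}_{\pi^*\alpha}(T(\pi,\Delta))\le0$ pour toute classe mobile $\alpha\in\mob(X)$. L'inclusion $\F_Y\subset T(\pi,\Delta)$ donne immédiatement $\mu_{\pi^*\alpha}(\F_Y)\le0$, soit $K_{\F_Y}\cdot\pi^*\alpha\ge0$ (car la pente vaut $-K_{\F_Y}\cdot\pi^*\alpha/\rk(\F_Y)$). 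En combinant avec la relation précédente et la formule de projection, on obtient
$$0\le K_{\F_Y}\cdot\pi^*\alpha=\deg(\pi)\,(K_\F+\Delta^{hor})\cdot\alpha,$$
de sorte que $(K_\F+\Delta^{hor})\cdot\alpha\ge0$ pour toute $\alpha\in\mob(X)$. Comme $\mob(X)$ est le cône dual de $\psef(X)$, ceci signifie exactement que $K_\F+\Delta^{hor}$ est pseudo-effectif.

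La principale difficulté ayant été absorbée par le théorème \ref{th:positivité orbifolde}, ce corollaire est pour l'essentiel formel : tout repose sur le relèvement au revêtement $\Delta$-adapté et sur la descente des feuilletages. Le seul point demandant un peu de soin est de vérifier que $\F_Y$ est bien un $\Delta$-feuilletage, c'est-à-dire l'annulation du crochet de Lie orbifolde $\bigwedge^2\F_Y\to T(\pi,\Delta)/\F_Y$ ; c'est précisément là, \emph{via} le lemme \ref{lem:factorisation crochet de Lie}, que l'intégrabilité (stabilité par crochet) du feuilletage $\F$ sur $X$ est utilisée de façon essentielle, et c'est ce qui permet d'invoquer la proposition \ref{prop:descente feuilletage} et sa formule sur les fibrés canoniques.
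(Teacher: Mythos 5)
Votre démonstration est correcte et suit essentiellement la même voie que celle du texte : relèvement de $\F$ en le $\Delta$-feuilletage $T(\pi,\Delta)\cap\pi^*\F$, application du théorème \ref{th:positivité orbifolde} avec $m=1$ (que vous formulez de manière équivalente par dualité, $\mu^{max}_{\pi^*\alpha}(T(\pi,\Delta))\le0$, là où le texte invoque directement le quotient $\Omega^1(\pi,\Delta)\twoheadrightarrow\F_\Delta^*$), puis la formule $K_{\F_\Delta}=\pi^*(K_\F+\Delta^{hor})$ de la proposition \ref{prop:descente feuilletage} et la dualité $\mob(X)$--$\psef(X)$. Votre vérification explicite, \emph{via} le lemme \ref{lem:factorisation crochet de Lie}, que $T(\pi,\Delta)\cap\pi^*\F$ est bien un $\Delta$-feuilletage est un détail que le texte laisse implicite, mais ce n'est pas une différence de méthode.
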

\begin{proof}
Le $\Delta$-feuilletage $\F_\Delta:=T(\pi,\Delta)\cap \pi^*\F$ fournit (par passage au dual) un quotient de $\Omega^1(\pi,\Delta)$ et nous pouvons appliquer le théorème \ref{th:positivité orbifolde} : $K_{\F_\Delta}\cdot \pi^*\alpha\ge 0$ pour toute classe mobile $\alpha$. D'après la proposition \ref{prop:descente feuilletage}, le fibré canonique de $\F_\Delta$ est donné par $K_{\F_\Delta}=\pi^*(K_\F+\Delta^{hor})$ et l'inégalité ci-dessus devient : $\deg(\pi)(K_\F+\Delta^{hor})\cdot\alpha\ge0$ et le diviseur $K_\F+\Delta^{hor}$ est donc bien pseudo-effectif.
\end{proof}

Le théorème \ref{th:positivité orbifolde} permet également d'établir un critère assurant qu'une paire est de type général, critère qui sera exploité dans la troisième et dernière partie.
\begin{theo}\label{th:critère type général}
Soit $(X,\Delta)$ une orbifolde et supposons qu'il existe $\F$ un faisceau sur $X$ avec $\det(\F)$ \emph{big}\footnote{Un fibré en droites $L$ sur $X$ est dit \emph{big} si $\kappa(L)=\dim(X)$. La terminologie, qu'elle soit francophone ou anglophone, n'est pas pleinement satisfaisante.} et un revêtement $\Delta$-adapté muni d'une injection :
$$\pi^*\F\hookrightarrow \Omega^1(\pi,\Delta)^{\otimes m}$$
pour un certain entier $m\ge1$. La paire $(X,\Delta)$ est alors de log-type général.
\end{theo}
\begin{proof}
Si nous savons montrer que $K_X+\Delta$ est pseudo-effectif, la démonstration est alors immédiate. En effet, dans cette situation, le quotient
$$\mathcal{Q}:=\Omega^1(\pi,\Delta)^{\otimes m}/\pi^*\F$$
vérifie alors $\mu_{\pi^*\alpha}(\mathcal{Q})\ge 0$. Mais nous avons alors :
$$N\pi^*(K_X+\Delta)=\det(\Omega^1(\pi,\Delta)^{\otimes m})=\det(\pi^*\F)+\det(\mathcal{Q})=\pi^*\det(\F)+\det(\mathcal{Q})$$
avec $N:=mn^{m-1}$. Pour tout $\alpha$ classe mobile sur $X$, nous en déduisons :
\begin{align*}
\deg(\pi)\left(N(K_X+\Delta)-\det(\F)\right)\cdot\alpha=&\pi^*\left(N(K_X+\Delta)-\det(\F)\right)\cdot\pi^*\alpha\\
&=\rk(\mathcal{Q})\mu_{\pi^*\alpha}(\mathcal{Q})\ge0.
\end{align*}
Cela montre que la différence $N(K_X+\Delta)-\det(\F)$ est pseudo-effectif et donc que $K_X+\Delta$ est \emph{big}.

Considérons alors $H$ un diviseur ample sur $X$ et le seuil suivant :
$$\tau:=\inf\set{t>0\mid K_X+\Delta+tH\in\psef(X)}.$$
Si $(t_k)_{k\ge1}$ est une suite de rationnels qui converge vers $\tau$, nous pouvons considérer pour chaque valeur de $k$ un revêtement $\Delta_k:=\Delta+t_kH$-adapté qui de plus domine $\pi$. Nous sommes donc dans la situation
$$\xymatrix{Y_k\ar[rr]^{\pi_k}\ar[rd] && X\\
&Y\ar[ru]_\pi
}$$
où $\pi_k$ factorise à travers $\pi$. En particulier, nous avons $\pi_k^*\F\subset \Omega^1(\pi_k,\Delta_k)^{\otimes m}$. Le même calcul que celui effectué ci-dessus montre que
$$N(K_X+\Delta_k)-\det(\F)$$
est pseudo-effectif et, en passant à la limite, le diviseur
$$N(K_X+\Delta+\tau H)-\det(\F)$$
est également pseudo-effectif (le cône $\psef(X)$ est fermé). Le diviseur $K_X+\Delta+\tau H$ est donc \emph{big} et, par définition de $\tau$, cela implique que le seuil $\tau$ doit être nul : le diviseur $K_X+\Delta$ est donc bien \emph{big} également.
\end{proof}
L'observation du fait que le théorème \ref{th:positivité orbifolde} permet d'éviter le recours aux résultats de finitude de \cite{BCHM} est due à Behrouz Taji.

\begin{rema}
Il est important de souligner ici l'importance du cadre orbifolde, importance qui apparaît pleinement dans la démonstration ci-dessus. En effet, le théorème \ref{th:critère type général} peut s'énoncer dans les cas classiques $\Delta=0$ ou $\Delta$ réduit et l'aspect orbifolde est alors absent, aussi bien des hypothèses que de la conclusion. Cependant, la démonstration montre qu'il est primordial d'élargir le cadre classique et d'autoriser des coefficients rationnels pour les composantes de $\Delta$.
\end{rema}

Notons que ce résultat peut se reformuler en termes de fibrés sur $X$. En effet, considérons les sections invariantes :
$$\mathsf{S}^m\Omega^p_X(\Delta):=\pi^G_*\left(\sym^m\Omega^p(\pi,\Delta)\right)\quad\mathrm{et}\quad \mathsf{T}^m\Omega^p_X(\Delta):=\pi^G_*\left(\Omega^p(\pi,\Delta)^{\otimes m}\right).$$
Ces faisceaux ont été considérés dans \cite[\S 2.5]{Ca11j} : il n'est pas difficile de constater qu'ils sont localement libres et d'en exhiber des générateurs locaux (\emph{cf. loc. cit.}). En particulier, ces faisceaux ne dépendent pas du revêtement $\Delta$-adapté $\pi$ utilisé pour les définir. Le théorème \ref{th:critère type général} peut se réécrire de la façon suivante.
\begin{coro}\label{cor:critère type général}
Soient $(X,\Delta)$ une orbifolde et $\F$ un faisceau avec $\det(\F)$ \emph{big} sur $X$. S'il existe un entier $m\ge 1$ et une injection de faisceaux :
$$\F\hookrightarrow \mathsf{T}^m\Omega^1_X(\Delta),$$
l'orbifolde $(X,\Delta)$ est alors de type général.
\end{coro}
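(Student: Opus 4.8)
The plan is to reduce the statement directly to Theorem \ref{th:critère type général}, whose hypotheses are phrased on a $\Delta$-adapted cover rather than on $X$; the only work is to convert the injection given on $X$ into the injection required upstairs. First I would fix a $\Delta$-adapted cover $\pi:Y\to X$ of Galois group $G$, which exists by Proposition \ref{prop:exisence Kawa cover}. By the very definition of the invariant sheaf we have
$$\mathsf{T}^m\Omega^1_X(\Delta)=\pi^G_*\left(\Omega^1(\pi,\Delta)^{\otimes m}\right),$$
and this construction does not depend on the chosen cover, so I am free to work with this particular $\pi$.

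The key ingredient is the natural adjunction-type morphism $\pi^*(\pi^G_*\mathcal{H})\to\mathcal{H}$ already used in the proof of Lemma \ref{lem:sous faisceau G-invariant}: for any torsion-free $G$-sheaf $\mathcal{H}$ on $Y$ it is injective, being an isomorphism over the étale locus of $\pi$ while the torsion-freeness of $\mathcal{H}$ (propagated to $\pi^*(\pi^G_*\mathcal{H})$ by flatness of $\pi$) rules out a nonzero kernel supported on the branch locus. I would apply this to $\mathcal{H}=\Omega^1(\pi,\Delta)^{\otimes m}$, which is locally free, hence torsion-free, since $\Omega^1(\pi,\Delta)$ is a genuine vector bundle by Remark \ref{rem:générateurs locaux du cotangent}. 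This yields an injection
$$\pi^*\mathsf{T}^m\Omega^1_X(\Delta)\hookrightarrow\Omega^1(\pi,\Delta)^{\otimes m}.$$

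Next I would pull back the hypothesized injection $\F\hookrightarrow\mathsf{T}^m\Omega^1_X(\Delta)$ along $\pi$. As $\pi$ is flat, the functor $\pi^*$ is exact and preserves injectivity, giving $\pi^*\F\hookrightarrow\pi^*\mathsf{T}^m\Omega^1_X(\Delta)$; composing with the previous injection produces
$$\pi^*\F\hookrightarrow\Omega^1(\pi,\Delta)^{\otimes m},$$
which is again injective as a composite of two injections. Since $\det(\F)$ is assumed \emph{big} on $X$, all hypotheses of Theorem \ref{th:critère type général} are now in place, and that theorem immediately gives that $(X,\Delta)$ is of log-general type.

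I expect the only genuine subtlety to be the injectivity of $\pi^*(\pi^G_*\mathcal{H})\to\mathcal{H}$, that is, the control of its behaviour along the branch locus of $\pi$; but this is precisely what Lemma \ref{lem:sous faisceau G-invariant} already establishes, so no new analytic or geometric input is needed. Everything else is formal, resting on the flatness of the adapted cover and on the local freeness of $\Omega^1(\pi,\Delta)^{\otimes m}$.
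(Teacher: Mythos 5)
Your proof is correct and takes essentially the same route as the paper: the corollary is presented there as an immediate rewriting of Theorem \ref{th:critère type général}, the implicit mechanism being exactly your flat pullback of the given injection composed with the injectivity of the natural map $\pi^*\left(\pi^G_*\mathcal{H}\right)\to\mathcal{H}$ established in the proof of Lemma \ref{lem:sous faisceau G-invariant}. You have merely made explicit the formal details (flatness of the adapted cover, local freeness of $\Omega^1(\pi,\Delta)^{\otimes m}$, independence of the chosen cover) that the paper leaves to the reader.
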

\begin{rema}\label{rem:faisceaux diff orbi sur X}
Il faut cependant garder en mémoire que, même si $K_X+\Delta$ est pseudo-effectif, les faisceaux $\mathsf{T}^m\Omega^1_X(\Delta)$ ne vérifient pas la conclusion du théorème \ref{th:positivité orbifolde}. En effet, la structure orbifolde sur $\PP^1$ composée de 4 points affectés de multiplicités 2 a un fibré canonique trivial. En revanche, il est immédiat de constater que $\mathsf{T}^1\Omega^1_{\PP^1}(\Delta)=\Omega^1_{\PP^1}$ et n'est donc pas pseudo-effectif.
\end{rema}

\section{Fibration du c\oe ur et application aux familles de variétés canoniquement polarisées}
Il est bien entendu impossible de résumer les articles \cite{Ca04} et \cite{Ca11j} en quelques pages. Nous souhaitons donner ici un aperçu des principes de classification des variétés projectives envisagées dans les travaux de F. Campana. Ceux-ci sont en grande partie condensés dans l'existence d'une fibration canoniquement associée à toute variété (ou orbifolde) et qui la scinde en deux géométries antithétiques : les fibres sont \emph{spéciales} alors que la base est de type général. L'utilisation de cette fibration permettra également une formulation unifiée des conjectures de Viehweg et Campana concernant les familles de variétés canoniquement polarisées.

\subsection{Orbifoldes spéciales et fibration du c\oe ur}

\subsubsection{Faisceaux différentiels et dimension canonique orbifolde}
Nous définissons dans un premier temps une dimension de Kodaira prenant en compte la présence d'une structure orbifolde.
\begin{defi}\label{defi:dimension de Kodaira orbi}
Soient $(X,\Delta)$ une orbifolde et $L\subset \Omega^p_X$ un faisceau saturé de rang 1 (un tel faisceau sera appelé faisceau différentiel). La dimension canonique (ou de Kodaira) de $L$ relativement à la structure orbifolde $\Delta$ est par définition :
$$\kappa_\Delta(X,L):=\kappa(Y,(\pi^*L)^{sat})$$
où $\pi:Y\to X$ est un revêtement $\Delta$-adapté et où la saturation se réfère au faisceau $\Omega^p(\pi,\Delta)$.
\end{defi}
\begin{exem}\label{ex:dimension canonique orbifolde}
Pour $L:=K_X$, nous obtenons $\kappa_\Delta(X,K_X)=\kappa(X,K_X+\Delta)$. Nous noterons également $\kappa(X,\Delta)$ cette quantité.
\end{exem}
\begin{rema}\label{rem:croissance dimension canonique}
Si $\Delta^1\le \Delta^2$ sont deux structures orbifoldes dont la différence est effective, les dimensions canoniques d'un faisceau différentiel $L$ vérifient naturellement : $\kappa_{\Delta^1}(X,L)\le\kappa_{\Delta^2}(X,L)$. Il est en effet possible de choisir un revêtement adapté pour $\Delta^2$ qui en domine un adapté pour $\Delta^1$.
\end{rema}

Cette notion de dimension canonique est particulièrement adaptée à l'étude des fibrations définies sur $X$. En effet, si $f:X\dashrightarrow Z$ est une fibration rationnelle avec $Z$ lisse, il existe un  sous-faisceau de rang 1 de $\Omega^p_X$ avec $p=\dim(Z)$ et qui coïncide avec $f^*K_Z$ au point général de $X$ : si $\varphi:\hat{X}\to X$ est une modification rendant $\hat{f}:\hat{X}\to Z$ régulière, l'expression $\mathsf{L}_f=\varphi_*(\hat{f}^*K_Z))^{sat}\subset \Omega^p_X$ définit bien un sous-faisceau de rang 1 de $\Omega^p_X$ qui ne dépend pas des modèles birationnels choisis (que ce soit pour $\hat{X}$ ou $Z$).
\begin{defi}\label{defi:kappa fibration}
Dans la situation ci-dessus, nous posons $\kappa_\Delta(f):=\kappa_\Delta(X,\mathsf{L}_f)$. Si $\kappa_\Delta(f)=\dim(Z)$, nous dirons que $f$ est de type général.
\end{defi}
Il est remarquable de noter que le fibré $\mathsf{L}_f$ est en quelque sorte l'image réciproque du fibré canonique d'une certaine structure orbifolde sur $Z$ (et ces considérations interviennent même dans le cas $\Delta=0$). Pour que cette construction ait un sens, il convient d'abord de se placer sur un bon modèle de la fibration $f$. Nous renvoyons à \cite[\S 4.1]{Ca11j} pour les détails concernant l'énoncé suivant.
\begin{prop}\label{prop:existence base orbifolde}
Si $f:X\dashrightarrow Z$ est une fibration rationnelle et $\Delta$ une structure orbifolde sur $X$, il existe alors un modèle birationnel de $Z$ (toujours noté $Z$) et une structure orbifolde $\Delta_Z:=\Delta_Z(f,\Delta)$ sur $Z$ qui de plus vérifie
$$\kappa(Z,\Delta_Z)=\kappa_\Delta(f).$$
La paire $(Z,\Delta_Z)$ sera appelée la \emph{base orbifolde} de $f$ induite par $\Delta$.
\end{prop}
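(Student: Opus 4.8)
The plan is to build $\Delta_Z$ by prescribing its multiplicity along each prime divisor of $Z$ through the fibre multiplicities of $f$ twisted by the $\Delta$-multiplicities upstairs, and then to match its orbifold canonical dimension with $\kappa_\Delta(f)=\kappa_\Delta(X,\mathsf{L}_f)$ by means of a single local computation on a $\Delta$-adapted cover. By Example \ref{ex:dimension canonique orbifolde} the target quantity $\kappa(Z,\Delta_Z)$ equals $\kappa(Z,K_Z+\Delta_Z)$, and by Definition \ref{defi:kappa fibration} together with Definition \ref{defi:dimension de Kodaira orbi} the source equals $\kappa\bigl(Y,(\pi^*\mathsf{L}_f)^{sat}\bigr)$ for a $\Delta$-adapted cover $\pi\colon Y\to X$ (which exists by Proposition \ref{prop:exisence Kawa cover}), the saturation being taken in $\Omega^p(\pi,\Delta)$. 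So the whole statement reduces to the equality $\kappa\bigl(Y,(\pi^*\mathsf{L}_f)^{sat}\bigr)=\kappa(Z,K_Z+\Delta_Z)$.

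First I would pass to a convenient model: after replacing $X$ and $Z$ by birational models I may assume that $f\colon X\to Z$ is a morphism of smooth projective varieties and that the union of $\operatorname{Supp}(\Delta)$ with the $f$-exceptional locus and the image in $Z$ of the non-étale locus of $f$ is a normal crossing divisor. For a prime divisor $D\subset Z$ I write $f^*D=\sum_k t_k W_k$, the sum running over the prime divisors $W_k\subset X$ with $f(W_k)=D$, and I let $m_k\ge 1$ be the $\Delta$-multiplicity of $W_k$ (so $m_k=a_i/b_i$ if $W_k=\Delta_i$, and $m_k=1$ otherwise). I then set $\Delta_Z:=\sum_D\bigl(1-\tfrac1{\min_k(t_k m_k)}\bigr)D$; these coefficients lie in $[0,1)$ and the chosen model guarantees a normal crossing support, so $(Z,\Delta_Z)$ is a genuine orbifold. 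This is the birational model of $Z$ claimed in the statement.

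The heart of the matter is the local computation of the line bundle $L_Y:=(\pi^*\mathsf{L}_f)^{sat}\subset\Omega^p(\pi,\Delta)$, which I carry out along a component $W_k\subset f^*D$ with $m_k=a_k/b_k$. Choosing coordinates adapted to $\pi$ as in Proposition \ref{prop:formes normales rev}, I may assume locally $W_k=\{x_1=0\}$, $D=\{z_1=0\}$, $f^*z_1=x_1^{t_k}$, $f^*z_j=x_j$ for $2\le j\le p$, and $x_1=w_1^{a_k}$ on $Y$. The section $f^*(dz_1\wedge\cdots\wedge dz_p)=t_k\,x_1^{t_k-1}\,dx_1\wedge\cdots\wedge dx_p$ generating $f^*K_Z$ then pulls back to $t_k a_k\,w_1^{a_kt_k-1}\,dw_1\wedge\cdots$, i.e. to a $w_1^{a_kt_k-b_k}$-multiple of the local generator $w_1^{b_k-1}dw_1\wedge\cdots$ of $\Omega^p(\pi,\Delta)$ recorded in Remark \ref{rem:générateurs locaux du cotangent} (note $\pi^*\Omega^p_X\subset\Omega^p(\pi,\Delta)$ by Remark \ref{rem:inclusion}). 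Since $(\pi^*\mathsf{L}_f)^{sat}=(\pi^*f^*K_Z)^{sat}$ — two rank-one subsheaves with torsion quotient have the same saturation — the divisorial part of the zeros is exactly $\sum_k(a_kt_k-b_k)E_k$ with $E_k:=\pi^{-1}(W_k)_{\mathrm{red}}$, so $L_Y=g^*K_Z\otimes\mathcal O_Y\bigl(\sum_k(a_kt_k-b_k)E_k\bigr)$, where $g:=f\circ\pi$ and the twist $V:=\sum_k(a_kt_k-b_k)E_k$ is effective (as $t_k\ge1$ and $a_k\ge b_k$) and $g$-vertical.

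It then remains to push forward along the fibration $g\colon Y\to Z$. From $\pi^*W_k=a_kE_k$ one gets $g^*D=\sum_k t_k a_k E_k$, so a rational function on $Z$ with a pole of order $\ell$ along $D$ acquires along $E_k$ a pole of order $\ell t_k a_k$; requiring this to be bounded by the coefficients of $NV$ yields $\ell\le N\bigl(1-\tfrac{b_k}{a_kt_k}\bigr)$ for all $k$, which is precisely the condition defining the multiplicity $\min_k(t_k m_k)$. Hence $g_*\mathcal O_Y(NV)=\mathcal O_Z(\lfloor N\Delta_Z\rfloor)$, and by the projection formula $g_*\mathcal O_Y(NL_Y)=\mathcal O_Z\bigl(NK_Z+\lfloor N\Delta_Z\rfloor\bigr)$, so $H^0(Y,NL_Y)=H^0\bigl(Z,N(K_Z+\Delta_Z)\bigr)$ for every sufficiently divisible $N$, giving the required equality of Kodaira dimensions. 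The main obstacle is the local analysis of the previous paragraph: one must check that the saturation order is uniformly $a_kt_k-b_k$ in every case — components of the fibre lying in $\Delta$, components carrying only the extra ramification of $\pi$ away from $\Delta$ (where $a_k=b_k=m_k\cdot 1$, so $m_k=1$ and the cover-dependence washes out), and components disjoint from the branch locus — and, separately, that the resulting $\Delta_Z$ is independent of the chosen model of $Z$ and of the adapted cover $\pi$, this invariance being exactly what makes the orbifold base $(Z,\Delta_Z)$ well defined.
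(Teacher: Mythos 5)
Your construction of $\Delta_Z$ and the local saturation computation are essentially right: the vanishing order $a_kt_k-b_k$ along a divisor $E_k$ dominating $D$, and the resulting coefficient $1-\frac{1}{\min_k(t_km_k)}$, reproduce Campana's definition of the orbifold base, and this is indeed the computational core (note that the paper gives no proof of this proposition and refers to \cite[\S 4.1]{Ca11j}). But there is a genuine gap at the step where you claim the divisorial zero locus of $(\pi^*\mathsf{L}_f)^{sat}$ in $\Omega^p(\pi,\Delta)$ is \emph{exactly} $\sum_k(a_kt_k-b_k)E_k$: the form $\pi^*f^*(dz_1\wedge\cdots\wedge dz_p)$ also vanishes divisorially along the divisors of $Y$ that $g=f\circ\pi$ maps into codimension $\ge 2$ of $Z$ (any blow-up of a vertical centre produces such divisors), so in fact $L_Y=g^*K_Z+V+V_{exc}$ with a nonzero $g$-exceptional part $V_{exc}$. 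Your pushforward argument then only yields for free the inclusion $g_*\mathcal{O}_Y(NL_Y)\subseteq\mathcal{O}_Z(NK_Z+\lfloor N\Delta_Z\rfloor)$ (a codimension-one computation plus reflexivity of divisorial sheaves on the smooth $Z$), hence $\kappa(Y,L_Y)\le\kappa(Z,K_Z+\Delta_Z)$; the reverse inclusion requires that a section of $N(K_Z+\Delta_Z)$ pull back with poles bounded by $N\,\mathrm{ord}_E(V_{exc})$ along \emph{every} $g$-exceptional divisor $E$, and this fails on an arbitrary model. Worse, your $\Delta_Z$ is itself model-dependent: blowing up a codimension-two centre of $X$ contained in a $\Delta$-component $W_k$ and dominating $D$ creates a new component of $f^*D$ with multiplicity datum $t\cdot 1$, which can strictly lower $\min_k(t_km_k)$, hence lower $\kappa(Z,K_Z+\Delta_Z)$, while the intrinsic quantity $\kappa_\Delta(f)=\kappa_\Delta(X,\mathsf{L}_f)$ does not move.

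This is precisely why the statement asserts the existence of a \emph{suitable} birational model, and why Campana works with neat fibrations (fibrations \og nettes\fg, cf.\ the footnote to the proposition \ref{prop:canonique feuilletage intégrable}): one requires every $f$-exceptional divisor to be exceptional for a birational morphism onto a fixed smooth model, and the bulk of \cite[\S 4.1]{Ca11j} consists in showing that on such models the two defects above disappear, and that the outcome is independent of the neat model and of the $\Delta$-adapted cover $\pi$. That independence is not a cosmetic afterthought you may merely flag: you already use it in your very first reduction, where you replace $X$ by a model on which $f$ is a morphism while silently assuming the invariance of $\kappa_\Delta(X,\mathsf{L}_f)$ under modification and change of adapted cover. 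In short, the proposal isolates the correct multiplicity formula, but leaves open exactly the part of the statement that carries the proof's weight, namely the choice of model on which that formula actually computes $\kappa_\Delta(f)$.
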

Finissons cette discussion avec le résultat suivant qui établit une borne sur la dimension canonique des faisceaux différentiels ainsi qu'une correspondance entre certains de ces derniers et fibrations de type général.
\begin{theo}\label{th:inégalité bogomolov}
Soient $(X,\Delta)$ une orbifolde et $L\subset \Omega^p_X$ un faisceau différentiel. La dimension de Kodaira orbifolde de $L$ vérifie
$$\kappa_\Delta(X,L)\le p$$
et, dans le cas d'égalité, il existe une fibration de type général $f:X\dashrightarrow Z$ telle que $L=\mathsf{L}_f$.
\end{theo}
Ce résultat \cite[cor. 3.13]{Ca11j} est une généralisation au cas orbifolde d'un théorème dû à Bogomolov \cite{Bog} dans le cas $\Delta=0$.

\subsubsection{Orbifoldes spéciales : définitions et exemples}
Nous pouvons maintenant donner la définition d'une orbifolde spéciale.
\begin{defi}\label{def:orbifolde spéciale}
Une orbifolde $(X,\Delta)$ est dite \emph{spéciale} si pour toute fibration rationnelle $f:X\dashrightarrow Z$ (avec $\dim(Z)>0$), l'inégalité suivante est vérifiée :
$$\kappa_\Delta(f)<\dim(Z).$$
En d'autres termes, $(X,\Delta)$ est spéciale si elle n'admet pas de fibrations de type général.
\end{defi}
\begin{rema}\label{rem:spécial=pas de Bog}
Le théorème \ref{th:inégalité bogomolov} montre qu'une orbifolde $(X,\Delta)$ est spéciale si et seulement si $\kappa_\Delta(X,L)< p$ pour tout faisceau différentiel $L$ sur $X$.
\end{rema}
\begin{exem}\label{ex:cas des courbes}
Il est immédiat de vérifier qu'une courbe $(C,\Delta)$ de genre $g$ est spéciale si et seulement si $2g-2+\deg(\Delta)\le0$. En particulier, dans le cas où $\Delta$ est réduit, les courbes orbifoldes spéciales sont exactement $\PP^1$, $\CC$, $\CC^*$ et les courbes elliptiques.
\end{exem}

Le théorème suivant est une conséquence du théorème de faible positivité \ref{th:campana-fujino} et généralise les résultats classique de \cite{V83,K81}. Il va nous permettre en particulier d'exhiber de nouveaux exemples d'orbifoldes spéciales (mais il est aussi un des outils essentiels de la construction du c{\oe}ur).
\begin{theo}\label{th:Cnm orb}
Si $f:(X,\Delta)\dashrightarrow Z$ est une fibration de type général, les dimensions canoniques vérifient l'inégalité :
$$\kappa(X,\Delta)\ge \kappa(X_z,\Delta_z)+\dim(Z)$$
où $\kappa(X_z,\Delta_z)$ désigne la restriction de la structure orbifolde $\Delta$ à la fibre générale de $f$.
\end{theo}
\begin{rema}\label{rem:type général implique presque holomorphe}
Il est établi dans \cite[th. 9.17]{Ca11j} qu'une fibration de type général est automatiquement presque holomorphe (son lieu d'indétermination ne rencontre pas la fibre générale) et la fibre générale $(X_z,\Delta_z)$ est donc bien définie.
\end{rema}
Voici les nouvelles classes d'exemples annoncées ci-dessus.
\begin{exem}\label{ex:orbifolde spéciale}
Une orbifolde $(X,\Delta)$ vérifiant $\kappa(X,\Delta)=0$ est spéciale. Une orbifolde $(X,\Delta)$ de Fano (c'est-à-dire avec $-(K_X+\Delta)$ ample) est également spéciale.
\end{exem}
\begin{proof}
Si $f:(X,\Delta)\dashrightarrow Z$ est une fibration de type général dont l'espace total vérifie $\kappa(X,\Delta)=0$, les fibres générales ont également une dimension canonique positive : $\kappa(X_z,\Delta_z)\ge0$. Mais ceci est incompatible avec la sur-additivité énoncée dans le théorème \ref{th:Cnm orb} si $\dim(Z)>0$ et $(X,\Delta)$ est donc spéciale.

Si $-(K_X+\Delta)$ est ample, nous considérons $\Delta':=\Delta+\dfrac{1}{N}H$ où $H$ est une section hyperplane générale de $\abs{-N(K_X+\Delta)}$. Pour cette nouvelle structure orbifolde, nous avons $K_X+\Delta'\sim_\QQ 0$ et l'orbifolde $(X,\Delta')$ est donc spéciale d'après l'exemple précédent. Les remarques \ref{rem:croissance dimension canonique} et \ref{rem:spécial=pas de Bog} montrent que $(X,\Delta)$ est également spéciale.
\end{proof}
Dans le cas $\Delta=0$, l'annulation des tenseurs holomorphes sur une variété rationnellement connexe montre qu'une telle variété doit être spéciale (et ceci généralise donc le cas Fano). La notion même de \emph{connexité rationnelle orbifolde} n'est pas si claire (\emph{cf.} la discussion dans \cite[\S 6]{Ca11j}) et une variante numérique est proposée dans \emph{loc. cit.}
\begin{defi}\label{def:kappa+}
Si $(X,\Delta)$ est une orbifolde lisse, posons\footnote{Une orbifolde $(X,\Delta)$ vérifiant $\kappa_+(X,\Delta)=-\infty$ est appelée $\kappa$-\emph{rationnellement connexe} dans \cite[\S 7.5]{Ca11j}.} :
$$\kappa_+(X,\Delta):=\max\left(\kappa_\Delta(f)\mid f:X\dashrightarrow Z\right).$$
\end{defi}
Il est évident qu'une orbifolde $(X,\Delta)$ vérifiant $\kappa_+(X,\Delta)=-\infty$ est spéciale. Il semblerait d'ailleurs que cette classe, combinée avec les orbifoldes vérifiant $\kappa=0$, permette de reconstruire la classe des orbifoldes spéciales. En effet, F. Campana a montré \cite[cor. 11.4]{Ca11j} que, en admettant une version généralisée de la conjecture $C_{n,m}$ d'Iitaka, toute orbifolde spéciale est obtenue comme l'espace total d'une tour de fibrations dont les fibres orbifoldes vérifient $\kappa=0$ ou $\kappa_+=-\infty$.

\subsubsection{Réduction du c\oe ur}
Comme nous l'avons mentionné plus haut, l'introduction de la classe des orbifoldes spéciales permet en particulier d'avoir une image assez simple de la géométrie des orbifoldes : c'est une combinaison d'une partie spéciale et d'une autre de type général. L'existence de la réduction du c{\oe}ur a été établie dans \cite[th. 3.3]{Ca04} dans le cas des variétés et dans \cite[th. 10.1]{Ca11j} pour les orbifoldes.
\begin{theo}\label{th:decomposition du coeur}
Soit $(X,\Delta)$ une orbifolde. Il existe alors une unique fibration rationnelle
$$c_{(X,\Delta)}:(X,\Delta)\dashrightarrow C(X,\Delta)$$
vérifiant :
\begin{itemize}
\item si $(X,\Delta)$ est spéciale, $C(X,\Delta)$ est un point.
\item sinon, $c_{(X,\Delta)}$ est de type général et ses fibres orbifoldes générales sont spéciales.
\end{itemize}
Cette fibration est de plus maximale par rapport aux fibrations de type général : si $f:(X,\Delta)\dashrightarrow Z$ est une fibration de type général, $f$ se factorise à travers $c_{(X,\Delta)}$ :
$$\xymatrix{(X,\Delta)\ar@{-->}[rr]^f\ar@{-->}[rd]_{c_{(X,\Delta)}} & & Z\\
&C(X,\Delta).\ar@{-->}[ru] & 
}$$
La fibration $c_{(X,\Delta)}$ est appelée le \emph{c\oe ur} de l'orbifolde $(X,\Delta)$.
\end{theo}
Mentionnons à titre de remarque que le théorème \ref{th:Cnm orb} d'additivité orbifolde pour les fibrations de type général est l'outil essentiel pour établir le fait que le c{\oe}ur est une fibration de type général si $X$ n'est pas spécial.

\subsection{Hyperbolicité algébrique de l'espace des modules des variétés canoniquement polarisées}
\`A nouveau, nous ne donnons qu'un bref résumé des résultats de Viehweg \cite{livreVie} et Viehweg-Zuo \cite{VZ00} concernant l'espace des modules des variétés canoniquement polarisées et les formes différentielles provenant de ce dernier.
\subsubsection{Généralités sur l'espace des modules}
Dans son imposant travail \cite{livreVie}, Viehweg construit un espace de modules pour les variétés canoniquement polarisées. Considérons pour cela le foncteur de modules suivant :
$$\mathcal{M}_h:\left\{\begin{array}{ccl}
\mathcal{S}ch & \To & \mathcal{E}ns\\
S & \mapsto & \set{f:X\to S\mid f\,\mathrm{lisse\, et}\,\omega_{X/S}\,f-\mathrm{ample}}
\end{array}\right.
$$
où $\mathcal{S}ch$ désigne la catégorie des schémas sur $\CC$ et $\mathcal{E}ns$ la catégorie des ensembles. La notation $h$ désigne également un polynôme fixé à coefficients rationnels vérifiant $h(\ZZ)\subset \ZZ$ et il est sous-entendu que les fibres $X_s$ de $f:X\to S$ ont $h$ pour polynôme de Hilbert relativement à la polarisation $\omega_{X_s}$.

Ce foncteur est, d'une certaine manière, représenté par un schéma \cite[th. 1.11]{livreVie}.
\begin{theo}\label{th:espace de modules}
$\mathcal{M}_h$ admet un espace de modules grossier qui est un schéma quasi-projectif $M_h$ et qui vérifie donc :
\begin{enumerate}
\item les points complexes $M_h(\CC)$ sont en bijection avec $\mathcal{M}_h(\mathrm{Spec}(\CC))$ l'ensemble des classes d'isomorphismes de variétés canoniquement polarisées.
\item il existe une transformation naturelle $\mathfrak{m}:\mathcal{M}_h\to \mathrm{Hom}(\bullet,M_h)$ : toute famille $f:X\to S$ induit donc un unique morphisme (application modulaire) $\mathfrak{m}_f:S\to M_h$.
\item la transformation $\mathfrak{m}$ est universelle : toute transformation $\mathcal{M}_h\to \mathrm{Hom}(\bullet,Z)$ (avec $Z$ un schéma) se factorise à travers $\mathfrak{m}$.
\end{enumerate}
\end{theo}
Si $f:X\to S$ est une famille de variétés canoniquement polarisées, quitte à restreindre la base nous supposerons que $S$ (et donc $X$) est lisse et nous noterons :
$$\mathrm{Var}(f):=\dim(\mathfrak{m}_f(S)).$$
Dans \cite{V83}, la variation birationnelle est définie pour toute fibration entre variétés algébriques. Dans le cas précis, elle se trouve donnée par la formule ci-dessus et elle s'obtient également en considérant le rang générique de l'application de Kodaira-Spencer $T_{S}\to R^1f_*T_{X/S}$.

\subsubsection{Base des familles de variétés canoniquement polarisées}

Les résultats obtenus dans la deuxième partie de ce texte vont permettre de décrire en quelque sorte l'image de l'application $\mathfrak{m}_f$ associée à une famille de variétés canoniquement polarisées. L'hyperbolicité algébrique auquel le titre fait référence revient dans un sens vague à dire que les sous-variétés de $M_h$ sont de log-type général. Plus précisément, nous établissons l'énoncé suivant, démontré dans \cite{Taj}.
\begin{theo}\label{th:factorisation modulaire/coeur}
Soit $f:X^\circ\to Y^\circ$ une famille de variétés canoniquement polarisées ($f$ est un morphisme lisse entre variétés quasi-projectives lisses) et considérons l'application induite vers l'espace des modules :
$$\mathfrak{m}_f:Y^\circ\to \mathcal{M}_h.$$
Cette application factorise par le c{\oe}ur de $Y^0$ comme défini\footnote{Le c{\oe}ur de $Y^\circ$ est par définition le c{\oe}ur de l'orbifolde $(Y,D)$ avec $Y$ une compactification lisse et $D:=Y\setminus Y^\circ$ à croisements normaux. De même, nous dirons que $Y^\circ$ est spéciale si $(Y,D)$ l'est.} plus haut :
$$\xymatrix{Y^\circ\ar[rr]^{\mathfrak{m}_f}\ar[rd]_{c_{Y^\circ}} &&\mathcal{M}_h \\
&C(Y^\circ).\ar[ru]&
}$$
En particulier,
\begin{enumerate}
\item si la variation de $f$ est maximale $(\dim(\mathfrak{m}_f(Y^\circ))=\dim(Y^\circ))$, $Y^\circ$ est de log-type général (conjecture de Viehweg).
\item si $Y^\circ$ est spéciale, $f$ est isotriviale (conjecture de Campana).
\end{enumerate}
\end{theo}
Il faut noter ici que le point 1 est établi dans \cite{CP13}. L'ingrédient essentiel pour montrer cette factorisation est le résultat suivant de Viehweg et Zuo \cite{VZ00} qui stipule que la base d'une famille de variétés canoniquement polarisées porte beaucoup de différentielles symétriques.
\begin{theo}\label{th:VZ}
Si $f:X^\circ\to Y^\circ$ est une famille de variété canoniquement polarisées et si $Y$ désigne une compactification de $Y^\circ$ avec $D:=Y\setminus Y^\circ$ un diviseur à croisements normaux. Il existe alors un entier $m\ge1$ et un faisceau inversible $\mathcal{A}\subset \sym^m\Omega^1_Y(\log(D))$ vérifiant de plus : $\kappa(\mathcal{A})\ge\mathrm{Var}(f)$.
\end{theo}
Les travaux de Jabbusch et Kebekus \cite{JKaif} ont permis d'affiner le résultat précédent et montrent que le faisceau $\mathcal{A}$ provient en quelque sorte de l'espace de modules. Considérons pour cela $\mathfrak{m}_f:Y^\circ\to M_h$ l'application de modules et son prolongement (encore noté $\mathfrak{m}_f$) $Y\to \bar{M}_h$, où $\bar{M}_h$ est une compactification projective de $M_h$. Considérons la factorisation de Stein $g:Y\to Z$ de $\mathfrak{m}_h$ : $g$ est donc surjective, à fibres connexes et $\dim(Z)=\mathrm{Var}(f)$. Quitte à modifier $Y$ et $Z$, nous supposerons que $Z$ est lisse et que nous pouvons lui appliquer la conclusion de la proposition \ref{prop:existence base orbifolde} : il existe sur $Z$ une structure orbifolde induite par $g$ et $D$ et que nous noterons $\Delta$. Une fois ce cadre fixé, le résultat \cite[th. 1.4]{JKaif} s'énonce comme suit.
\begin{theo}\label{th:raffinement VZ}
Il existe un faisceau inversible $\mathcal{A}_Z\subset \mathsf{S}^m\Omega^1_Z(\Delta)$ qui est de plus \emph{big} :
$$\kappa(\mathcal{A}_Z)=\mathrm{Var}(f)=\dim(Z).$$
\end{theo}
Fort de ces informations, nous pouvons conclure la
\begin{proof}[Démonstration du théorème \ref{th:factorisation modulaire/coeur}]
Le corollaire \ref{cor:critère type général} montre que la paire $(Z,\Delta)$ est de type général. Comme, d'après la proposition \ref{prop:existence base orbifolde}, nous avons $\kappa(Z,\Delta)=\kappa_D(g)$, la fibration $g:(Y,D)\to Z$ est donc de type général. Il nous suffit alors d'appliquer le théorème \ref{th:decomposition du coeur} : le c{\oe}ur de $(Y,D)$ doit dominer la fibration $g$ et cela fournit la factorisation annoncée.
\end{proof}


\providecommand{\bysame}{\leavevmode ---\ }
\providecommand{\og}{``}
\providecommand{\fg}{''}
\providecommand{\smfandname}{\&}
\providecommand{\smfedsname}{\'eds.}
\providecommand{\smfedname}{\'ed.}
\providecommand{\smfmastersthesisname}{M\'emoire}
\providecommand{\smfphdthesisname}{Th\`ese}

\end{document}